\def\today{\number\day\space\ifcase\month\or   January\or February\or
   March\or April\or May\or June\or   July\or August\or September\or
   October\or November\or December\fi\   \number\year}
\theoremstyle{definition}
\newtheorem{lma}{Lemma}[section]
\newaliascnt{thmCt}{lma}
\newtheorem{thm}[thmCt]{Theorem}
\newaliascnt{corCt}{lma}
\newtheorem{cor}[corCt]{Corollary}
\newaliascnt{propCt}{lma}
\newtheorem{prop}[propCt]{Proposition}
\newtheorem*{thm*}{Theorem}
\newtheorem*{cor*}{Corollary}
\newtheorem*{prop*}{Proposition}
\newcounter{theoremintro}
\newtheorem{thmintro}[theoremintro]{Theorem}
\newtheorem{corintro}[theoremintro]{Corollary}
\newaliascnt{pgrCt}{lma}
\newaliascnt{dfCt}{lma}
\newtheorem{df}[dfCt]{Definition}
\newaliascnt{remCt}{lma}
\newtheorem{rem}[remCt]{Remark}
\newaliascnt{remsCt}{lma}
\newaliascnt{egCt}{lma}
\newtheorem{eg}[egCt]{Example}
\newaliascnt{egsCt}{lma}
\newaliascnt{qstCt}{lma}
\newaliascnt{pbmCt}{lma}
\newaliascnt{notaCt}{lma}
\newtheorem{nota}[notaCt]{Notation}
\newcommand{\beq}{\begin{equation}}
\newcommand{\eeq}{\end{equation}}
\newcommand{\beqa}{\begin{eqnarray*}}
\newcommand{\eeqa}{\end{eqnarray*}}
\newcommand{\bal}{\begin{align*}}
\newcommand{\eal}{\end{align*}}
\newcommand{\bi}{\begin{itemize}}
\newcommand{\ei}{\end{itemize}}
\newcommand{\be}{\begin{enumerate}}
\newcommand{\ee}{\end{enumerate}}
\newcommand{\ep}{\varepsilon}
\newcommand{\Z}{{\mathbb{Z}}}
\newcommand{\C}{{\mathbb{C}}}
\newcommand{\N}{{\mathbb{N}}}
\newcommand{\Hi}{{\mathcal{H}}}
\newcommand{\M}{{\mathcal{M}}}
\newcommand{\MoR}{{\mathcal{M}\overline{\otimes}\mathcal{R}}}
\newcommand{\R}{{\mathcal{R}}}
\newcommand{\B}{{\mathcal{B}}}
\newcommand{\U}{{\mathcal{U}}}
\newcommand{\id}{{\mathrm{id}}}
\newcommand{\spec}{{\mathrm{sp}}}
\newcommand{\diag}{{\mathrm{diag}}}
\newcommand{\supp}{{\mathrm{supp}}}
\newcommand{\Aut}{{\mathrm{Aut}}}
\newcommand{\Ad}{{\mathrm{Ad}}}
\newcommand{\dimRok}{{\mathrm{dim}_\mathrm{Rok}}}
\newcommand{\ca}{$C^*$-algebra}
\newcommand{\cas}{$C^*$-algebras}
\newcommand{\uca}{unital $C^*$-algebra}
\newcommand{\wtRp}{weak tracial Rokhlin property}
\newcommand{\Rp}{Rokhlin property}
\newcommand{\I}{\infty}
\title[]{Strongly outer actions of amenable groups on $\mathcal{Z}$-stable $C^*$-algebras}
\date{\today}
\thanks{The first named author was partially supported
by the Deutsche Forschungsgemeinschaft (SFB 878), and by a Postdoctoral Research Fellowship
from the Humboldt Foundation. This work was supported by GIF grant 1137/2011,
 by Israel Science Foundation Grant 476/16 and by the Fields Institute.}
\author[Eusebio Gardella]{Eusebio Gardella}
\address{Eusebio Gardella
Mathematisches Institut, Fachbereich Mathematik und Informatik der
Universit\"at M\"unster, Einsteinstrasse 62, 48149 M\"unster, Germany.}
\email{gardella@uni-muenster.de}
\urladdr{www.math.uni-muenster.de/u/gardella/}
\author{Ilan Hirshberg}
\address{Ilan Hirshberg
Department of Mathematics, Ben-Gurion University of the Negev, Be’er
Sheva 8410501, Israel.}
\email{ilan@math.bgu.ac.il}
\urladdr{www.math.bgu.ac.il/~ilan/}
\begin{document}

\begin{abstract}
Let $A$ be a separable, unital, simple, $\mathcal{Z}$-stable, nuclear $C^*$-algebra,
and let $\alpha\colon G\to \Aut(A)$ be an action of a countable amenable group. If the trace space $T(A)$ is a Bauer simplex and
the action of $G$ on $\partial_eT(A)$ has finite orbits and Hausdorff orbit space, we show that the following are equivalent:
\be\item $\alpha$ is strongly outer;
\item $\alpha\otimes\id_{\mathcal{Z}}$ has the weak tracial Rokhlin property.\ee
If $G$ is moreover residually finite, the above conditions are also equivalent to
\be\item[(3)] $\alpha\otimes\id_{\mathcal{Z}}$ has finite Rokhlin dimension (in fact, at most 2).\ee

When the covering dimension of $\partial_eT(A)$ is finite, we prove that
$\alpha$ is cocycle conjugate to $\alpha\otimes\id_{\mathcal{Z}}$. In particular, the equivalences above
hold for $\alpha$ in place of $\alpha\otimes\id_{\mathcal{Z}}$.
%
\end{abstract}

\maketitle

\tableofcontents

\renewcommand*{\thetheoremintro}{\Alph{theoremintro}}
\section{Introduction}

The Rokhlin property and its various generalizations form a collection of regularity properties for group actions on \ca s,
whose roots stem from the Rokhlin lemma in Ergodic Theory.
Early
works include the studies of cyclic group actions on UHF-algebras by
Herman and Jones, and Herman and Ocneanu, and later
for automorphisms by Kishimoto.
Although the Rokhlin property is relatively common for actions of the integers, there are significant $K$-theoretic obstructions
for the Rokhlin property for finite group actions (and hence actions of groups which have torsion). This was studied in depth by Izumi \cite{Izu_finiteI_2004,
Izu_finiteII_2004} and spurred additional work.
 One obstruction is
that the Rokhlin property, at least for finite groups, implies certain divisibility properties
on $K$-theory. Attempts to circumvent impediments of this sort led Phillips to introduce the
\emph{tracial Rokhlin property} \cite{Phi_tracial_2011},
where the projections in the Rokhlin property are now assumed to have a left over which
is small in trace.
Among other applications,
the tracial Rokhlin property has been used by Echterhoff, L\"uck, Phillips, and Walters
\cite{EchLucPhiWal_structure_2010} to study fixed point
algebras of the irrational rotation algebra $A_\theta$ under certain canonical actions of finite cyclic groups.

The tracial Rokhlin property does not bypass the most obvious obstruction to
admitting Rokhlin actions: the existence of nontrivial projections. For example, the
Jiang-Su algebra has no nontrivial projections, and hence
does not admit any action of a nontrivial group with the tracial Rokhlin property. The need to study
weaker versions of these properties led to two further generalizations. The first
one, called the
\emph{weak tracial Rokhlin property}, which replaces projections with positive elements, has been considered by
the second author and Orovitz \cite{HirOro_tracially_2013}, Sato \cite{Sat_rohlin_2010}, and
Matui and Sato \cite{MatSat_stability_2012}.

A different approach was taken in a paper by the second author, Winter, and Zacharias \cite{HirWinZac_rokhlin_2015},
which introduced the notion of \emph{Rokhlin dimension}. In this formulation,
the partition of unity appearing in the Rokhlin property is replaced by a multi-tower partition
of unity consisting of positive contractions, the elements of each tower being indexed by the group elements and
permuted by the group action. Rokhlin dimension zero then corresponds to the Rokhlin property, but the extra
flexibility makes the property of having finite Rokhlin dimension a much more common one.
For example, for actions of the integers, Rokhlin dimension one turns out to be generic for actions of
 $\mathcal{Z}$-absorbing separable \ca s.
 Finite Rokhlin dimension is used as a tool to show that various structural
 properties of interest pass from an algebra to the crossed product, particularly finiteness of the nuclear dimension,
 and absorption of a strongly self-absorbing \ca.
Rokhlin dimension has been defined and studied for actions of various classes of groups:
for residually finite groups by Szab{\'o}, Wu, and Zacharias \cite{SzaWuZac_rokhlin_2014}; for compact groups by
the first author \cite{Gar_rokhlin_2017, Gar_regularity_2017}, the second author and Phillips \cite{HirPhi_rokhlin_2015},
and further by the authors and Santiago
\cite{GarHirSan_rokhlin_2017}; and for flows by the second author, Szab{\'o}, Winter, and Wu
\cite{HirSzaWinWu_rokhlin_2017}. This notion has also been explored for
quantum group actions in \cite{GarKalLup_model_2019}.

The focus of the study of the various Rokhlin-type properties naturally centered on two extreme cases: either actions on commutative \ca s
or actions on simple \ca s. For instance, the results of \cite{SzaWuZac_rokhlin_2014,HirSzaWinWu_rokhlin_2017} focused on showing that
actions on commutative \ca s of various groups have finite Rokhlin dimension provided the Gelfand spectrum is finite dimensional and the
induced action on the spectrum is free. In the simple case, in \cite{Lia_rokhlin_2016,Lia_rokhlin_2017} Liao showed that for actions of
$\Z^m$ on simple nuclear separable unital and $\mathcal{Z}$-absorbing \ca s, whose trace simplex is a Bauer simplex with finite dimensional
boundary and is fixed by the action, strong outerness\footnote{An action $\alpha\colon G\to\Aut(A)$ of a discrete group $G$ on a \ca\ $A$ is said to be \emph{strongly outer}
if for every $g\in G\setminus\{e\}$ and every $\tau\in T(A)^{\alpha_g}$, the weak extension of $\alpha_g$ in the GNS
representation of $A$ associated to $\tau$ is outer.} is equivalent to finite Rokhlin dimension. Liao's argument does not work for finite
group actions, or for non-finitely generated groups, and part of the motivation for this paper was to find a suitable generalization
for Liao's theorem to general amenable groups.

This work focuses on actions on simple \ca s. We study the relationships between strong outerness, the weak tracial Rokhlin property and
finite Rokhlin dimension by showing that they are equivalent in many cases of interest.
More specifically, we obtain the following main results.

\begin{thmintro}\label{thmintro:Equiv}
Let $G$ be a countable amenable group, let $A$ be a separable, simple, $\mathcal{Z}$-stable,
nuclear \uca, and let $\alpha\colon G\to\Aut(A)$ be an action.
Suppose that $T(A)$ is a nonempty Bauer simplex, and that the induced action of $G$ on
$\partial_eT(A)$ has finite orbits and Hausdorff orbit space.
Then the following conditions are equivalent:
\be\item $\alpha$ is strongly outer;
\item $\alpha\otimes\id_{\mathcal{Z}}$ has the weak tracial Rokhlin property.\ee
If $G$ is furthermore residually finite, the above conditions are also equivalent to
\be\item[(3)] $\alpha\otimes\id_{\mathcal{Z}}$ has finite Rokhlin dimension.
\item[(4)] $\alpha\otimes\id_{\mathcal{Z}}$ has Rokhlin dimension at most 2.\ee

\end{thmintro}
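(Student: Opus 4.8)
The plan is to establish the stated equivalences by isolating the two substantive implications, $(1)\Rightarrow(2)$ and, under residual finiteness, $(1)\Rightarrow(4)$, and disposing of the rest routinely. First I would note that strong outerness is insensitive to tensoring with $\mathcal{Z}$: since $\mathcal{Z}$ has a unique trace, the assignment $\tau\mapsto\tau\otimes\tau_{\mathcal{Z}}$ identifies $T(A)$ with $T(A\otimes\mathcal{Z})$ and the corresponding GNS von Neumann completions coincide, so $\alpha$ is strongly outer \ifo $\alpha\otimes\id_{\mathcal{Z}}$ is. The implication $(2)\Rightarrow(1)$ is then standard: a hypothetical inner weak extension of some $\alpha_g$ on a fixed fiber is incompatible with the approximate freeness encoded by Rokhlin positive elements. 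Similarly $(4)\Rightarrow(3)$ is trivial, and $(3)\Rightarrow(1)$ follows from the usual argument that a nontrivial Rokhlin tower decomposition forces the weak extension of $\alpha_g$ to move a central projection, contradicting innerness. Everything therefore reduces to manufacturing Rokhlin-type data out of strong outerness.

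For $(1)\Rightarrow(2)$ I would pass to the uniform tracial completion of $A$. Because $T(A)$ is a Bauer simplex, this completion is a W*-bundle over the compact base $K=\partial_eT(A)$ in the sense of Ozawa, with fibers $\pi_\tau(A)''$; by simplicity and nuclearity these fibers are the hyperfinite II$_1$ factor $\R$, and $\alpha$ induces an action on the bundle covering the given $G$-action on $K$. Strong outerness translates precisely into the statement that, for each $g\neq e$ and each $g$-fixed $\tau$, the induced automorphism of the fiber $\R$ over $\tau$ is outer. The goal is then to produce, in the central-sequence algebra of the bundle, positive contractions $(f_g)_{g\in G}$ that are mutually orthogonal, approximately permuted by the action ($\alpha_g(f_h)\approx f_{gh}$), and sum to $1$ in uniform trace. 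Fiberwise this is exactly Ocneanu's Rokhlin lemma for outer actions of countable amenable groups on $\R$; the real work is to perform it continuously and equivariantly over $K$. Here the hypotheses enter decisively: finite orbits force every point of $K$ to have finite-index stabilizer, so near each orbit the Rokhlin data is governed by a finite group, and the Hausdorff orbit space allows one to glue these local constructions along a partition of unity on $K/G$ into a global continuous section.

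With the Rokhlin family constructed at the W*-bundle level, the remaining step is to lift it to genuine positive elements of the $C^*$-central-sequence algebra $A_\omega\cap A'$. This is where $\mathcal{Z}$-stability does the heavy lifting, supplying a property-(SI)/$\sigma$-ideal mechanism (in the spirit of Matui--Sato and Kirchberg) that realizes tracially defined central data by actual positive contractions, with the trace-small leftover demanded by the weak tracial Rokhlin property; this gives $(1)\Rightarrow(2)$. For $(1)\Rightarrow(4)$ when $G$ is residually finite, I would additionally use a decreasing sequence of finite-index normal subgroups with trivial intersection: combining the fiberwise model structure above with the finite quotients yields honest multi-tower Rokhlin partitions of unity indexed by cosets, and a two-coloring of $G$ by translates of a fundamental domain for a suitable finite-index subgroup cuts the number of towers down to two, giving Rokhlin dimension at most $2$. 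Together with $(4)\Rightarrow(3)\Rightarrow(1)$ and $(1)\Leftrightarrow(2)$, this closes all the equivalences.

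The main obstacle throughout is the equivariant, norm-continuous patching in $(1)\Rightarrow(2)$: Ocneanu's theory is purely fiberwise, and upgrading a pointwise Rokhlin lemma to a continuous section of the bundle that is compatible with the base action is precisely what forces the structural hypotheses on $\partial_eT(A)$ (Bauer, finite orbits, Hausdorff orbit space). By comparison, the subsequent $C^*$-lifting, though technically involved, is comparatively mechanical once $\mathcal{Z}$-stability is invoked.
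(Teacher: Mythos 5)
Your outline matches the paper's architecture in broad strokes (reduce everything to $(1)\Rightarrow(2)$ and $(1)\Rightarrow(4)$; pass to the $W^*$-bundle over the trace boundary; use Ocneanu's theorem fiberwise; lift back to $A_\omega\cap A'$ via a $\sigma$-ideal argument), but the two steps that carry all the weight are asserted rather than proved, and in each case the mechanism you name would not work as described.

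For $(1)\Rightarrow(2)$, the phrase ``glue these local constructions along a partition of unity on $K/G$ into a global continuous section'' is exactly the gap. One cannot patch Rokhlin towers directly: convex combinations of projections are not projections, and towers produced over different patches of $K/G$ need not be compatible where the patches overlap. The paper avoids gluing Rokhlin data altogether; instead it proves that the bundle \emph{absorbs} Ocneanu's model action $\mu_G$ (a McDuff-type absorption statement, \autoref{thm:W*BdleAbsR} together with \autoref{thm:MoRabsorbsMcDuff}), and then reads off the towers from the absorbed copy of $(\R,\mu_G)$. The patching is done at the level of approximately multiplicative, approximately central, approximately equivariant u.c.p.\ maps out of a matrix algebra, where overlaps are handled by order-zero/commuting-range technology (\autoref{prop:EqOzCommRges}) and, crucially, by the exact central projections supplied by the extra tensor factor $\R$ in \autoref{lma:FibersToBundleCopyR}. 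That extra factor (available because $\alpha\otimes\id_{\mathcal Z}$ already absorbs $\id_{\R}$ at the bundle level) is what lets one dispense with any finite-dimensionality assumption on $\partial_eT(A)$; your sketch never confronts this issue. Also note the paper works over $K/G$ with fibers $\bigoplus_{\sigma\in G\cdot\tau}\overline{A}^\sigma$, identified via induced algebras with $C(G/H,\R)$ carrying $\texttt{Lt}\otimes\mu_G$ (\autoref{prop:FiberW*Rp}); this identification, not a pointwise appeal to Ocneanu, is what makes the fiberwise statement usable.

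For $(1)\Rightarrow(4)$, the proposed ``two-coloring of $G$ by translates of a fundamental domain'' does not correspond to any argument and does not explain the bound. The obstruction you must overcome is that $[G:H]$ need not divide $[1_A]$ in $K_0$, so no unital copy of $M_{|G/H|}$ can exist in the central sequence algebra; one only gets an equivariant order-zero map $\rho$ from $\B(\ell^2(G/H)^{\otimes k})$. The paper's route is: use the weak tracial Rokhlin property (already established) to get the \emph{equivariant} property (SI) (\autoref{prop:EqSI}), produce an invariant contraction $s$ with $s^*s=1-\rho(1)$ and $\rho(e^{\otimes k})s=s$, feed $(\rho,s)$ into the generators-and-relations presentation of the dimension drop algebra $I_{n,n+1}$ (\autoref{thm:univprop}) to get a unital equivariant copy of $(I^{(k)}_{G/H},\mu^{(k)})$, and then exhibit three (not two) towers inside that dimension drop algebra (\autoref{prop:GactIkdim2}): two colors come from the Fell-absorption computation $\dimRok=1$ for the product-type action on $\bigotimes_n\B(\ell^2(G)^{\otimes n}\oplus\C)$, and the third from interpolating across the dimension drop interval. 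None of this is visible in your sketch, so the claimed bound of $2$ is unsupported.
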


The result above generalizes and extends a number of works by several authors, and constitutes an
important step towards developing a classification theory for strongly outer actions of
amenable groups on classifiable \ca s. Observe that all the tracial assumptions are automatically
satisfied if either $\partial_eT(A)$ is finite; or if $\alpha$ fixes all traces; or if $G$ is finite.

\begin{thmintro}\label{thmintro:Zabs}
Let $G$ be a countable amenable group, let $A$ be a separable, simple, $\mathcal{Z}$-stable, nuclear \uca, and let $\alpha\colon G\to\Aut(A)$ be an action.
Suppose that $\partial_eT(A)$ is nonempty, compact and finite dimensional, and that the
induced action of $G$ on $\partial_eT(A)$ has finite orbits and Hausdorff orbit space.
Then $\alpha$ is cocycle conjugate to $\alpha\otimes\id_{\mathcal{Z}}$.
\end{thmintro}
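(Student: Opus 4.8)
The plan is to prove directly that $\alpha$ is \emph{equivariantly $\mathcal{Z}$-stable}, by reducing the cocycle conjugacy $\alpha\cong\alpha\otimes\id_{\mathcal{Z}}$ to an embedding problem in a central sequence algebra. Write $A_\infty=\ell^\infty(\N,A)/c_0(\N,A)$, let $F_\infty(A)=(A_\infty\cap A')/\Ann(A,A_\infty)$ be the central sequence algebra, and let $\tilde\alpha$ be the action induced on it by $\alpha$. The equivariant analogue of the McDuff/Toms--Winter absorption theorem for the strongly self-absorbing system $(\mathcal{Z},\id_{\mathcal{Z}})$ reduces the statement to the construction of a single unital $*$-homomorphism $\mathcal{Z}\to F_\infty(A)^{\tilde\alpha}$ into the fixed-point algebra. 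Since $\mathcal{Z}$ is an inductive limit of prime dimension-drop algebras and, by R\o rdam--Winter, a unital copy of $\mathcal{Z}$ can be assembled from approximately central order-zero data, it suffices to produce, for every $n$, unital homomorphisms $M_n\to F_\infty(A)^{\tilde\alpha}$ together with the auxiliary order-zero maps relating $M_n$ and $M_{n+1}$, all with image fixed by $\tilde\alpha$.

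First I would work at the tracial, von Neumann-algebraic level. Since $K=\partial_eT(A)$ is compact, $T(A)$ is a Bauer simplex, and completing $A$ in the uniform trace norm $\|a\|_{2}=\sup_{\tau\in T(A)}\tau(a^*a)^{1/2}$ produces a $W^*$-bundle $\M$ over $K$; simplicity and nuclearity of $A$ force every fibre to be the injective $\mathrm{II}_1$ factor $\R$, and $\alpha$ induces an action $\bar\alpha$ on $\M$ covering the given $G$-action on $K$. The goal of this stage is to produce $\bar\alpha$-invariant unital embeddings $M_n\to(\M^\omega\cap\M')^{\bar\alpha}$. Here amenability of $G$ is essential: by the structure theory of amenable group actions on the hyperfinite factor (in its $W^*$-bundle form), the induced action tensorially absorbs the trivial one, which supplies invariant central matrix subalgebras fibrewise. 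The dynamical hypotheses make this uniform over $K$: finiteness of the orbits reduces the fibrewise problem, over a saturated neighbourhood of each orbit, to the action of a finite group; Hausdorffness of $K/G$ allows these local solutions to be glued along a partition of unity; and finite covering dimension of $K$ is what guarantees the $\R$-fibred bundle is McDuff to begin with.

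Next I would transport this tracial data to $F_\infty(A)$ using property (SI), which for simple, separable, nuclear, $\mathcal{Z}$-stable $A$ controls the Cuntz comparison of uniformly trace-small against uniformly trace-large central sequences and lets order-zero and matrix structures in $\M^\omega\cap\M'$ be realized in $F_\infty(A)$. Running the lifting in a form compatible with the fixed-point algebra---an equivariant property (SI), again leaning on amenability of $G$ to reconcile invariance with the comparison estimates---yields $\tilde\alpha$-invariant matrix and order-zero data in $F_\infty(A)$. Assembling this data by the R\o rdam--Winter recipe gives the required unital homomorphism $\mathcal{Z}\to F_\infty(A)^{\tilde\alpha}$, and the absorption theorem then delivers $\alpha\cong\alpha\otimes\id_{\mathcal{Z}}$.

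The hard part will be the equivariant bookkeeping over the trace space: producing central sequences that are at once central in the $C^*$-sense, fixed by $\tilde\alpha$, and uniform over the (possibly infinite) extreme boundary $K$. The three tracial hypotheses are exactly what make this tractable---finite covering dimension of $K$ upgrades the $\R$-fibred bundle to a McDuff one and bounds the complexity of the fibrewise selections; finiteness of the orbits localizes the equivariance to finite groups where averaging is available; and Hausdorffness of $K/G$ permits a global partition-of-unity patching. Establishing property (SI) in a form that respects the fixed-point algebra, and making the invariance survive the passage from the von Neumann level to the $C^*$-level, is where the main technical effort will lie.
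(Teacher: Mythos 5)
Your proposal follows essentially the same route as the paper's proof of this theorem: pass to the equivariant $W^*$-bundle over $\partial_eT(A)$, use the finite-orbit/Hausdorff-quotient hypotheses and Ocneanu-type absorption to make each fibre equivariantly McDuff, use finite covering dimension of the base to upgrade this to the whole bundle, then lift the resulting invariant matrix embedding to order-zero data in the central sequence algebra via projectivity and an equivariant property (SI), assemble a dimension drop algebra \`a la R{\o}rdam--Winter, and invoke the equivariant McDuff-type absorption theorem. The only caveat is your phrase ``unital homomorphisms $M_n\to F_\infty(A)^{\tilde\alpha}$'' at the $C^*$-level, which is obstructed $K$-theoretically and should read ``order-zero maps,'' as your subsequent description in fact makes clear.
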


In particular, since the weak tracial Rokhlin property and Rokhlin dimension are cocycle conjugacy invariants, we
deduce that in the context of Theorem~\ref{thmintro:Equiv}, if $\dim_{\mathrm{cov}}(\partial_eT(A))<\I$, then $\alpha\otimes\id_{\mathcal{Z}}$
can be replaced by $\alpha$. That is we have:

\begin{corintro}\label{corintro:FiniteG}
Let $G$ be an amenable countable residually finite group, let $A$ be a separable, simple, $\mathcal{Z}$-stable, nuclear \uca, and let $\alpha\colon G\to\Aut(A)$ be an action.
Suppose that $T(A)$ is a nonempty Bauer simplex, that $\dim_{\mathrm{cov}}(\partial_eT(A))<\I$  and that the
induced action of $G$ on $\partial_eT(A)$ has finite orbits and Hausdorff orbit space.
Then the following conditions are equivalent:
\be
\item $\alpha$ is strongly outer;
\item $\alpha$ has the weak tracial Rokhlin property;
\item $\alpha$ has finite Rokhlin dimension (in fact, at most 2).
\ee
\end{corintro}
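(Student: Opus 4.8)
The plan is to deduce this corollary formally from Theorems~\ref{thmintro:Equiv} and~\ref{thmintro:Zabs}, so that the real content lies entirely in those two results; the only work here is to check that their hypotheses are met and to invoke the relevant cocycle-conjugacy invariances. First I would verify the standing assumptions. The hypotheses of Theorem~\ref{thmintro:Equiv} hold verbatim: $G$ is countable, amenable and residually finite (the last being needed only for the Rokhlin-dimension conclusion), $A$ is separable, simple, $\mathcal{Z}$-stable, nuclear and unital, $T(A)$ is a nonempty Bauer simplex, and the induced action on $\partial_eT(A)$ has finite orbits and Hausdorff orbit space. For Theorem~\ref{thmintro:Zabs} I would note that a Bauer simplex is by definition a Choquet simplex whose extreme boundary is closed; since $T(A)$ is weak-$*$ compact, $\partial_eT(A)$ is then compact, it is nonempty because $T(A)$ is, and it is finite dimensional precisely because $\dim_{\mathrm{cov}}(\partial_eT(A))<\I$. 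Thus all the hypotheses of Theorem~\ref{thmintro:Zabs} are satisfied as well.

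I would then apply Theorem~\ref{thmintro:Zabs} to conclude that $\alpha$ is cocycle conjugate to $\alpha\otimes\id_{\mathcal{Z}}$. The engine of the deduction is the fact that the \wtRp\ and finite \Rdim\ (including the bound by a fixed integer) are invariant under cocycle conjugacy. Granting this, the cocycle conjugacy above yields that $\alpha$ has the \wtRp\ if and only if $\alpha\otimes\id_{\mathcal{Z}}$ does, that $\alpha$ has finite \Rdim\ if and only if $\alpha\otimes\id_{\mathcal{Z}}$ does, and likewise for \Rdim\ at most $2$.

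Combining these with Theorem~\ref{thmintro:Equiv}, which gives that $\alpha$ is strongly outer if and only if $\alpha\otimes\id_{\mathcal{Z}}$ has the \wtRp, if and only if $\alpha\otimes\id_{\mathcal{Z}}$ has finite \Rdim, if and only if $\alpha\otimes\id_{\mathcal{Z}}$ has \Rdim\ at most $2$, I would chain the two sets of equivalences. This produces that $\alpha$ is strongly outer if and only if $\alpha$ has the \wtRp, if and only if $\alpha$ has finite \Rdim\ (in fact at most $2$), which is exactly the statement of the corollary. Note that strong outerness never needs to be transported across the cocycle conjugacy directly, since it is linked to $\alpha$ itself through the equivalence (1)$\Leftrightarrow$(2) of Theorem~\ref{thmintro:Equiv}.

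There is essentially no substantive obstacle at this stage: the argument is pure bookkeeping that transports the equivalences of Theorem~\ref{thmintro:Equiv} from $\alpha\otimes\id_{\mathcal{Z}}$ across the cocycle conjugacy of Theorem~\ref{thmintro:Zabs}. The single point that must not be taken for granted is the cocycle-conjugacy invariance of the \wtRp\ and of \Rdim, on which the whole deduction rests; this is standard but should be recorded or cited explicitly. All the genuine difficulty is confined to Theorems~\ref{thmintro:Equiv} and~\ref{thmintro:Zabs} themselves.
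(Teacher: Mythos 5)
Your proposal is correct and is exactly the argument the paper intends: the authors dispose of the corollary in the single sentence preceding its statement, by invoking Theorem~\ref{thmintro:Zabs} to get $\alpha\cong_{\mathrm{cc}}\alpha\otimes\id_{\mathcal{Z}}$ and then transporting the equivalences of Theorem~\ref{thmintro:Equiv} across this cocycle conjugacy using the invariance of the weak tracial Rokhlin property and of Rokhlin dimension. Your additional verification of the hypotheses (in particular that compactness of $\partial_eT(A)$ follows from the Bauer assumption) is accurate and slightly more explicit than the paper's treatment.
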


\textbf{Acknowledgements:} The first named author is thankful to Nate Brown, Hung-Chang Liao, Martino Lupini,
Hannes Thiel, Qingyun Wang, and Stuart White for helpful conversations. We thank G\'abor Szab\'o and the anonymous referee for reading a previous 
version of this paper and making various helpful comments.
This work was initiated while the first named author was visiting the second in October 2016, and part of
it was completed while the authors were participating in the workshop ``Future targets in the classification program for
amenable C*-algebras'', held at the BIRS Centre, Banff, in September 2017.

\section{Absorption of McDuff actions}

In this section, we isolate and study a particular class of discrete group actions on the hyperfinite II$_1$-factor $\R$,
which we call \emph{McDuff actions}; see \autoref{df:McDuffaction}. This class contains all actions obtained as infinite
tensor products of finite-dimensional unitary representations, as well as certain Bernoulli (sub-)shifts. Moreover, just like
for the hyperfinite II$_1$-factor, absorption of a McDuff action can be neatly characterized in terms of equivariant
embeddings into central sequence algebras; see \autoref{thm:W*BdleAbsR}. Using this characterization, we show a pair of
very useful results concerning McDuff absorption of equivariant $W^*$-bundles: in \autoref{thm:McDuffFromFiberToBundle}, we
show that if an equivariant $W^*$-bundle with finite dimensional base has fibers that absorb a fixed McDuff action, then
the bundle itself absorbs this action as well. The bundles that we obtain in some of our applications do not have finite
dimensional base, so we also prove the following variant: if an equivariant $W^*$-bundle has fibers that absorb a fixed McDuff
action, then the $(\R,\id_{\R})$-stabilization of the bundle absorbs this action as well; see \autoref{thm:MoRabsorbsMcDuff}.
In other words, finite-dimensionality of the base can be dispensed of at the cost of adding an equivariant copy of $\R$.

The results in this section, which are also of independent interest, are fundamental tools that will be used in the remainder
of this work.

We begin with the main definition of this section.

\begin{df}\label{df:McDuffaction}
Let $G$ be a discrete group and let $\delta\colon G\to\Aut(\R)$ be an action. We say that $\delta$ is \emph{strongly McDuff} if
there exist an equivariant isomorphism
\[\varphi\colon (\R,\delta)\to (\R\overline{\otimes}\R,\delta\otimes\delta)\]
and unitaries $(w_n)_{n\in\N}$ in $\R\overline{\otimes}\R$ satisfying
\[\lim_{n\to \I} \|w_n\varphi(x)w_n^*-1\otimes x\|_2=0 = \lim_{n\to \I} \|(\delta_g\otimes\delta_g)(w_n)-w_n\|_2\]
for all $x\in \R$ and for all $g\in G$. In other words, the equivariant isomorphism $\varphi$ is \emph{$G$-equivariantly
approximately unitarily equivalent} to the second tensor factor embedding $\R\to \R\overline{\otimes}\R$.

We say that $\delta$ is \emph{McDuff} if it is cocycle equivalent to a strongly
McDuff action.
\end{df}

The trivial action on $\R$ is clearly McDuff, as is any Bernoulli shift $\beta\colon G\to\Aut(\overline{\otimes}_{g\in G}\R)$ of an amenable group $G$. 


Actions as in the following proposition will be relevant to our work. The proof that such product type actions
are strongly McDuff is inspired by similar results in the context of strongly self-absorbing \ca s and actions;
see \cite{TomWin_strongly_2007} and \cite{Sza_strongly_2018}. 

\begin{prop}\label{eg:deltanu}
Let $G$ be a discrete group, let $\nu\colon G\to \U(M_{d})$ be a unitary representation with $d>1$. 
Identify $\R$ with the weak closure of $\bigotimes\limits_{n\in\N}M_{d}$
in the GNS representation associated to its unique trace.
We define an action $\delta^{\nu}\colon G\to \Aut(\R)$ by setting, for $g\in G$,
\[\delta^{\nu}_g=\bigotimes\limits_{n=1}^\I \Ad(\nu_g).\]
Then $\delta^\nu$ is strongly McDuff.
\end{prop}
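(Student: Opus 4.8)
The plan is to verify the two conditions in Definition~\ref{df:McDuffaction} directly, exploiting the infinite-tensor-product structure of the construction. I first observe that $\R$, realized as $\bigotimes_{n\in\N}M_d$ in its trace GNS representation, carries a canonical identification with $\R\overline{\otimes}\R$ obtained by splitting the index set $\N$ into the odd and even integers (or any partition of $\N$ into two infinite sets). Under this splitting, the product action $\delta^\nu=\bigotimes_{n=1}^\I\Ad(\nu_g)$ is intertwined with $\delta^\nu\otimes\delta^\nu$, because $\Ad(\nu_g)$ acts identically on every tensor factor and the partition respects this. This yields the required equivariant isomorphism $\varphi\colon(\R,\delta^\nu)\to(\R\overline{\otimes}\R,\delta^\nu\otimes\delta^\nu)$.

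Next I must produce the unitaries $(w_n)$ implementing the $2$-norm approximate unitary equivalence of $\varphi$ with the second-factor embedding, and do so $G$-approximately invariantly. The natural candidates are tensor products of \emph{flip} (shift) unitaries that permute finitely many of the $M_d$-tensor factors so as to move the image of $\varphi$ onto the second copy of $\R$. Concretely, using that any two unital embeddings of a finite tensor power of $M_d$ into $\R$ are unitarily equivalent, I can choose $w_n$ to be a unitary implementing a permutation of the first $n$ relevant tensor slots; as $n\to\I$, the $2$-norm distance $\|w_n\varphi(x)w_n^*-1\otimes x\|_2$ tends to $0$ for each fixed $x$, since $x$ is approximated in $2$-norm by finitely supported tensors and $w_n$ eventually moves all of their support correctly. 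The point here is that only finitely many tensor legs carry the mass of $x$ up to small $2$-norm error, so a single permutation unitary suffices for each approximation stage.

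The step I expect to be the main obstacle is verifying the approximate invariance condition $\lim_n\|(\delta^\nu_g\otimes\delta^\nu_g)(w_n)-w_n\|_2=0$. Here the key structural fact is that $d>1$, which guarantees the existence of a tracial state and hence a genuine $2$-norm on $\R$; but more importantly, the permutation unitaries $w_n$ are built from flips on $M_d\otimes M_d$, and these flips \emph{commute} with the diagonal action $\Ad(\nu_g)\otimes\Ad(\nu_g)$ because $\nu_g\otimes\nu_g$ is invariant under the flip. Thus each $w_n$ is exactly fixed by $\delta^\nu_g\otimes\delta^\nu_g$ (up to the part of the permutation that interacts with the odd/even splitting, which I handle by arranging the $w_n$ to move factors only \emph{within} the two halves consistently), forcing the difference to be $0$ or to vanish in $2$-norm. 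Making this precise requires care in bookkeeping the indices of the tensor legs and in confirming that the chosen permutations are compatible with the equivariant isomorphism $\varphi$; this is the genuinely technical heart of the argument, while the approximate equivalence in the first limit follows more routinely from density of finitely supported elements.
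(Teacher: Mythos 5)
Your argument is correct, but it proceeds differently from the paper's. The paper does not construct the intertwining unitaries for $\varphi$ directly: it first shows that $\delta^\nu\otimes\delta^\nu$ has an approximately inner $G$-equivariant flip (the flip unitaries $w_n$ are approximately invariant because the flip fixes $\mu_g^{(m)}\otimes\mu_g^{(m)}$), and then runs a half-flip/ultrapower intertwining argument \`a la Toms--Winter and Szab\'o --- embedding a commuting copy of $(\R,\delta^\nu)$ into $(\R^\omega\cap\R',(\delta^\nu)^\omega)$, conjugating by the flip unitary in the ultrapower, and finishing with a reindexation. You instead exploit the product structure head-on: the equivariant isomorphism $\varphi$ comes from a bijection of index sets, and the intertwiners can be taken to be the canonical unitaries permuting finitely many tensor legs, which commute with $\nu_g^{\otimes k}$ (Schur--Weyl) and are therefore \emph{exactly} fixed by $\delta^\nu_g\otimes\delta^\nu_g$; the first limit then follows from $2$-norm density of finitely supported elements. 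Two remarks. First, the exact invariance only holds if you really use the canonical leg-permutation unitaries; the abstract fact that two unital embeddings of $M_{d^m}$ into $\R$ are unitarily equivalent produces an intertwiner with no invariance, so the phrasing should commit to the explicit permutation matrices. Second, your parenthetical worry about arranging the permutations to move factors only within the two halves is unnecessary: since every tensor leg of $\R\overline{\otimes}\R$ carries the same representation $\nu$, \emph{any} finite permutation of legs is implemented by an invariant unitary, including those that move legs across the two halves (which is in fact unavoidable, as the permutation must carry the $\varphi$-image positions into the second half). Your route is more elementary and self-contained for this product-type action; the paper's route is the general strongly-self-absorbing machinery, which does not depend on $\varphi$ being an index rearrangement or on the $w_n$ being exactly invariant.
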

\begin{proof}
We begin by showing that $\delta^\nu\otimes\delta^\nu$ has what may be called approximately inner $G$-equivariant flip
(in the tracial sense).
For $g\in G$ and $m\in\N$, we set 
\[\mu^{(m)}_g=\nu_g\otimes \cdots\otimes \nu_g\in M_{d}\otimes\cdots\otimes M_{d}\subseteq \mathcal{R}.\]
Let $(w_n)_{n\in\N}$ be a sequence of unitaries in $\mathcal{R}\overline{\otimes}\mathcal{R}$ satisfying
\[\lim_{n\to\I}\|w_n(x\otimes y)w_n^*-y\otimes x\|_2=0\]
for all $x,y\in\mathcal{R}$. Given $g\in G$ and $m\in\N$, we have 
\[\lim_{n\to \I} \|\Ad(\mu^{(m)}_g\otimes \mu^{(m)}_g)(w_n)-w_n\|_2=\lim_{n\to\I}\|w_n(\mu^{(m)}_g\otimes \mu^{(m)}_g)w_n^*-\mu^{(m)}_g\otimes \mu^{(m)}_g\|_2=0,\]
and thus $\lim\limits_{n\to\I}\|(\delta^\nu_g\otimes\delta^\nu_g)(w_n)-w_n\|_2=0$. 

Fix an equivariant isomorphism
\[\varphi\colon (\R,\delta^\nu) \to (\R\overline{\otimes}\R,\delta^\nu\otimes\delta^\nu).\]
(Note that such an isomorphism exists, since it may be obtained by rearranging the matricial tensor factors
of $\R\overline{\otimes}\R$.) It suffices to show that there is a $G$-equivariant approximate unitary 
equivalence between $\varphi$ and the first factor embedding.

Fix a free ultrafilter $\omega$ over $\N$. 
Since $(\R,\delta^\nu)$ is in fact equivariantly isomorphic to its inifinite tensor
product, we may choose a unital, equivariant embedding
\[\rho\colon (\R,\delta^\nu)\to (\R^\omega\cap \R',(\delta^\nu)^\omega),\]
whose image commutes with $\varphi(\R)\cup \iota(\R)$. 

Let $w\in (\R\overline{\otimes}\R)^\omega$ denote the unitary determined by $(w_n)_{n\in\N}$, 
which is fixed by $(\delta^\nu\otimes\delta^\nu)^\omega$. We denote by 
$\kappa_\omega$ the canonical embedding
of $\R\overline{\otimes}\R$ into its ultrapower. We abbreviate $\kappa_\omega\circ\varphi$ to $\varphi_\omega$,
and similarly $\kappa_\omega\circ\iota$ to $\iota_\omega$. Thus, there are well-defined equivariant
unital homomorphisms $\varphi_\omega\otimes \rho$ and $\iota_\omega\otimes\rho$ from $\R\overline{\otimes}\R$ into
its own ultrapower. Then 
\[\varphi_\omega=(\varphi_\omega\otimes\rho)\circ\iota \ \ \mbox{ and } \ \ 
\iota_\omega=(\iota_\omega\otimes\rho)\circ\iota.\]
Moreover, since $(w_n)_{n\in\N}$ implements the flip on $\mathcal{R}\overline{\otimes}\mathcal{R}$, we also have 
\[\Ad(w)\circ (\varphi_\omega\otimes\rho)=\rho\otimes\varphi_\omega \ \  \mbox{ and } \ \  \Ad(w)\circ (\iota_\omega\otimes\rho)=\rho\otimes\iota_\omega.\]
Hence,
\[\Ad(w)\circ \varphi_\omega=\Ad(w)\circ (\varphi_\omega\otimes\rho)\circ\iota=(\rho\otimes\varphi_\omega)\circ\iota=\rho\circ\iota.
\]
Likewise, $\Ad(w)\circ \iota_\omega=\rho$. It follows that $\iota_\omega$ and $\varphi_\omega$ are 
unitarily equivalent via a $G$-invariant unitary. A standard reindexation argument then shows that 
$\iota$ and $\varphi$ are $G$-equivariantly approximately unitarily equivalent, showing that $\delta^\nu$ is strongly McDuff.
\end{proof}

\begin{eg}\label{eg:FiniteGModelStrMcDuff}
Let $G$ be a finite group, and let $\mu_G\colon G\to\Aut(\mathcal{R})$ be the
unique (up to conjugacy) outer action of $G$ on $\mathcal{R}$;
see \cite{Jon_finite_1980}. Then 
$\mu_G$ is strongly McDuff by \autoref{eg:deltanu}, 
since it can be realized as $\delta^\lambda$, for 
the left-regular representation $\lambda\colon G\to\U(\ell^2(G))$.
\end{eg}

Our next goal is to prove a characterization of absorption of a McDuff action in terms
of central sequence algebras (\autoref{thm:W*BdleAbsR}),
which resembles McDuff's characterization of absorption of $\mathcal{R}$.
We need the result for equivariant $W^*$-bundles, which are the equivariant 
version of the notion introduced by Ozawa in Section~5
of~\cite{Oza_dixmier_2013}. We define these first.

\begin{df}\label{df:EqWbundle}
Let $G$ be a discrete group, and let $K$ be a compact metrizable space. An \emph{equivariant $W^*$-bundle
over $K$} is a quadruple $(\M, K, E,\gamma)$, where:
\be\item $\M$ is a \ca;
\item there is a given unital inclusion of $C(K)$ into the center of $\M$;\footnote{The choice of inclusion is part of the definition,
but we lighten the notation by omitting it throughout.}
\item $E\colon \M\to C(K)$ is a faithful conditional expectation satisfying $E(ab)=E(ba)$ for all $a,b\in\M$; 
\item the norm-closed unit ball of $\M$ is complete in the \emph{uniform 2-norm} defined by
$\|a\|_{2,u}=\|E(a^*a)\|^{1/2}$ for all $a\in \M$; and
\item $\gamma\colon G\to \Aut(\M)$ is an action satisfying $\gamma_g\circ E=E=E\circ\gamma_g$ for all $g\in G$.
\ee

We say that the bundle is \emph{strictly separable} if $\M$ contains a countable subset which is dense
in the uniform 2-norm.

For $\lambda\in K$, define a tracial state $\tau_\lambda$ on $\M$ by $\tau_\lambda=\mathrm{ev}_\lambda\circ E$.
If $\pi_\lambda\colon \M\to \B(L^2(\M,\tau_\lambda))$ denotes the associated GNS representation, we call the
image $\M_\lambda$ of $\pi_\lambda$ the \emph{fiber} of $\M$ over $\lambda$. It is easy to check that $\gamma$ induces an
action $\gamma^{\lambda}\colon G\to\Aut(\M_\lambda)$ which makes $\pi_\lambda$ equivariant.
\end{df}

The space $K$ and the conditional expectation $E$ are often suppressed from the notation, and we will often simply say
that $\M$ is a $W^*$-bundle and that $\gamma\colon G\to\Aut(\M)$ is a fiber-wise action.

The motivation for considering $W^*$-bundles is given by the following example,
which is due to Ozawa~\cite{Oza_dixmier_2013} (page 351).
Recall that a Choquet simplex is said to be \emph{Bauer} if its extreme boundary
is compact.

\begin{eg}\label{eg:W*bundle}
Let $A$ be a unital, simple, separable \ca\ for which $T(A)$ is a nonempty Bauer simplex.
We write $A_1$ for the unit ball of $A$. Define the \emph{uniform 2-norm}
$\|\cdot\|_{2,u}$ on $A$ by $\|a\|_{2,u}=\sup_{\tau\in T(A)}\tau(a^*a)^{1/2}$ for all $a\in A$.
Set $K=\partial_eT(A)$.  Set $\overline{A}^u$ to be the completion of $A$ in the uniform 2-norm, that is, the $C^*$-algebra of norm-bounded uniform 2-norm Cauchy sequences, modulo the ideal of sequences which converge to zero in the uniform 2-norm. 

Then $\overline{A}^u$ has a natural structure of a
$W^*$-bundle over $K$ with conditional expectation $E\colon \overline{A}^u\to C(K)$ determined by
$E(a)(\tau)=\tau(a)$ for all $a\in A$ and $\tau\in K$. Moreover, the fiber of $\overline{A}^u$
over $\tau\in K$ can be canonically identified with the weak closure $\overline{A}^\tau$ of the image of $A$
under the GNS representation $\pi_\tau$ associated to $\tau$.
\end{eg}

Equivariant $W^*$-bundles can be constructed from certain actions of \cas, as follows.

\begin{eg}\label{eg:EqW*bundle}
Adopt the notation from \autoref{eg:W*bundle}, and assume that $T(A)$ is a nonempty Bauer simplex. Let $G$ be a discrete
group, and let $\alpha\colon G\to\Aut(A)$ be an action. Assume that the orbit of every $\tau\in\partial_eT(A)$
under $\alpha$ is finite, and that the orbit space $\partial_eT(A)/G$ is Hausdorff\footnote{This is automatically the case if
the action of $G$ on $K$ factors through a finite subgroup, for example if $\partial_eT(A)$ is finite or if $G$ itself is finite.}. Set $K=\partial_eT(A)$.
Then there is a canonical faithful conditional expectation $\mathcal{E}\colon C(K)\to C(K/G)$ given by
\[\mathcal{E}(f)(G\cdot \tau)=\frac{1}{|G\cdot \tau|}\sum_{\sigma\in G\cdot \tau}f(\sigma)\]
for all $f\in C(K)$ and all $\tau\in K$.
Then $\M=\overline{A}^u$ has a natural structure of a
$W^*$-bundle over $K/G$ with conditional expectation $\M \to C(K/G)$ given by $\mathcal{E}\circ E$.
Moreover, for $\tau\in K$, the fiber of $\M$
over $G\cdot \tau$ can be canonically identified with the (finite) direct sum
$\bigoplus\limits_{\sigma\in G\cdot \tau}\overline{A}^\sigma$.
\end{eg}

Spatial tensor products of $W^*$-bundles are again $W^*$-bundles in a natural way; see 
Definition~3.4 in~\cite{BBSTWW_covering_2015}. Next, we observe that
the construction works well with equivariant $W^*$-bundles. 

\begin{rem}\label{rem:TensorProdEqWBundles}
Let $G$ be a discrete group. Given $G$-equivariant 
$W^*$-bundles $(\mathcal{M},\gamma)$ and $(\mathcal{N},\delta)$ over spaces $K$ and $L$, respectively, 
its $C^*$-algebraic minimal tensor product $\mathcal{M}\otimes \mathcal{N}$ admits a faithful
conditional expectation $E=E_{\mathcal{M}}\otimes E_{\mathcal{N}}$ onto $C(K\times L)\cong C(K)\otimes C(L)$
which is tracial; see Definition~3.4 in~\cite{BBSTWW_covering_2015}. Thus, $E$ induces a uniform 2-norm.
The tensor product $\mathcal{M}\overline{\otimes}\mathcal{N}$
of the $W^*$-bundles is the strict completion of $\mathcal{M}\otimes\mathcal{N}$ in the Hilbert
$C(K\times L)$-module associated to $(\mathcal{M}\overline{\otimes}\mathcal{N},E)$. Moreover, the 
$C^*$-algebraic tensor product action $\gamma\otimes \delta$ of $G$ on 
$\mathcal{M}\otimes\mathcal{N}$ extends to an action on the $W^*$-bundle $\mathcal{M}\overline{\otimes}\mathcal{N}$,
which we will also denote by $\gamma\otimes\delta$.
\end{rem}

We will mostly use tensor products when one of the factors is the $W^*$-bundle $\mathcal{R}$, in which case
$\mathcal{M}$ and $\MoR$ are $W^*$-bundles over the same space. When both $W^*$-bundles are von Neumann algebras,
the tensor product in the sense discussed above agrees with the spatial tensor product. 

Another notion we will recurrently use is that of ultrapowers of equivariant $W^*$-bundles; 
see Definition~3.7 in~\cite{BBSTWW_covering_2015}. 

\begin{df}\label{df:UltrapowEqWBundle}
Let $G$ be a discrete group, and let $(\M, K, E, \gamma)$ be a $G$-equivariant $W^*$-bundle, with uniform 
2-norm $\|\cdot\|_{2,u}$. Let $\omega$ be a free ultrafilter on $\N$. We define the \emph{ultrapower} of this 
$W^*$-bundle to be the quadruple $(\M^\omega, K^\omega, E^\omega, \gamma^\omega)$ defined as follows:
\be\item The \ca\ $\M^\omega$ is the quotient of $\ell^\I(\N,\M)$ by the ideal
\[c_\omega(\N,\M)=\{(a_n)_{n\in\N}\in \ell^\I(\N,\M)\colon \lim_{n\to \omega}\|a_n\|_{2,u}=0\}.\]
Then $\M^\omega$ inherits a uniform 2-norm $\|\cdot\|_{2,u}^\omega$ given by
\[\|(a_n)_{n\in\N}\|_{2,u}^\omega=\lim_{n\to\omega}\|a_n\|_{2,u}.\]
\item The space $K^\omega$ is the ultracopower of $K$, that is, the Gelfand spectrum of the $C^*$-algebraic ultrapower 
$\prod\limits_{\omega} C(K)$.\footnote{The ultrapower of $K$, denoted $K_\omega=\prod_{\omega} K$,
is defined as the quotient of $\prod\limits_{n\in\N}K$ modulo the relation given by $(x_n)_{n\in\N}\sim (y_n)_{n\in\N}$ whenever
$\{n\in\N\colon x_n=y_n\}\in \omega$. This space is not of interest to us in this paper.}
\item The conditional expectation $E^\omega \colon \M^\omega\to C(K^\omega)$ is induced by the 
conditional expectation $E\colon \ell^\I(\N,\M)\to \ell^\I(\N,C(K))$ via pointwise application of $E$; see~Proposition~3.9
in~\cite{BBSTWW_covering_2015}.
\item The action $\gamma^\omega\colon G\to\Aut(\M^\omega)$ is induced by the associated action $\gamma^\I$
of $G$ on $\ell^\I(\N,\M)$ which consists in applying $\gamma$ 
pointwise, noting that the ideal $c_\omega(\N,\M)$ is invariant under $\gamma^\I$. \ee
\end{df}

We also need the following notions of equivalence of actions on $W^*$-bundles.

\begin{df}
Let $\M$ and $\mathcal{N}$ be $W^*$-bundles over the same compact metrizable space, let $G$ be a discrete group
and let $\gamma\colon G\to\Aut(\M)$ and $\beta\colon G\to\Aut(\mathcal{N})$ be fiber-wise actions.
\be\item We say that $(\M,\gamma)$ and $(\mathcal{N},\beta)$ are \emph{conjugate (as equivariant $W^*$-bundles)}
if there exists an isomorphism of \ca s $\varphi\colon \M\to\mathcal{N}$ satisfying
\[E_\mathcal{N}\circ\varphi=E_{\M} \ \ \mbox{ and } \ \ \beta_g\circ \varphi=\varphi\circ\gamma_g\]
for all $g\in G$.
\item A $\gamma$-\emph{cocycle} is a function $u\colon G\to\U(\M)$ satisfying
$u_{gh}=u_g\gamma_g(u_h)$ for all $g,h\in G$. In this case, we define the cocycle
perturbation $\gamma^u$ of $\gamma$ to be $\gamma^u_g=\Ad(u_g)\circ\gamma_g$ for
all $g\in G$.
\item We say that $(\M,\gamma)$ and $(\mathcal{N},\beta)$ are \emph{cocycle conjugate (as equivariant $W^*$-bundles)}
if there exists a $\gamma$-cocycle $u$ such that $\gamma^u$
and $\beta$ are conjugate in the sense of (1) above.
\ee
\end{df}

In the context of the above definition, note that there is a canonical, equivariant embedding
$(\M,\gamma)\to (\M^\omega,\gamma^\omega)$, via constant sequences. Identifying $\M$ with its image 
in $\M^\omega$, we write $\M^\omega\cap \M'$ for the relative commutant, and observe that $\gamma^\omega$
restricts to an action on $\M^\omega\cap \M'$, which we also denote by $\gamma^\omega$. 

The result below is stated for $W^*$-bundles, but it is new even for von Neumann algebras.
The argument is mostly standard, and is inspired by McDuff's original work on absorption of $\R$;
see for example Proposition~3.11 in~\cite{BBSTWW_covering_2015}, which is the case of the trivial group.
Additional work is needed in to prove that (2) implies (1), particularly to obtain the cocycle, and this
has been noticed several times in the $C^*$-algebraic setting \cite{GolIzu_quasifree_2011, MatSat_stability_2012, Sza_strongly_2018}.

\begin{thm}\label{thm:W*BdleAbsR}
Let $\mathcal{M}$ be a strictly separable $W^*$-bundle, let $G$ be a countable discrete group,
let $\delta\colon G\to\Aut(\R)$ be a McDuff action,
and let $\gamma\colon G\to\Aut(\mathcal{M})$ be a fiber-wise action. Then the following are equivalent:
\be
\item $(\mathcal{M},\gamma)$ is cocycle conjugate to $(\mathcal{M}\overline{\otimes} \mathcal{R},\gamma\otimes\delta)$ (as equivariant $W^*$-bundles);
\item there exists a unital equivariant homomorphism $(\mathcal{R},\delta)\to (\mathcal{M}^\omega\cap \mathcal{M}',\gamma^\omega)$.
\ee
\end{thm}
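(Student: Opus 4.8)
The plan is to run the classical two-way scheme behind McDuff's absorption theorem, adapted to the equivariant $W^*$-bundle setting, with the genuinely new work concentrated in the cocycle bookkeeping of the hard direction.

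For (1) $\Rightarrow$ (2), suppose $(\M,\gamma)$ is cocycle conjugate to $(\MoR,\gamma\otimes\delta)$ via a $\gamma$-cocycle $u$ and an isomorphism $\Phi\colon\M\to\MoR$ intertwining $\gamma^u$ and $\gamma\otimes\delta$. The first observation is that on a relative commutant the cocycle is invisible: since each $u_g\in\M$ and elements of $\M^\omega\cap\M'$ (together with their $\gamma^\omega$-images) commute with $\M$, one has $(\gamma^u)^\omega=\gamma^\omega$ there. Hence $\Phi^\omega$ restricts to an equivariant isomorphism $(\M^\omega\cap\M',\gamma^\omega)\cong\big((\MoR)^\omega\cap(\MoR)',(\gamma\otimes\delta)^\omega\big)$, and the right-hand side contains the copy $1\otimes(\R^\omega\cap\R')$ carrying the action $\delta^\omega$. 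It therefore suffices to produce a unital equivariant homomorphism $(\R,\delta)\to(\R^\omega\cap\R',\delta^\omega)$. Because $\delta$ is McDuff, and cocycles are again invisible on $\R^\omega\cap\R'$, the strongly McDuff structure supplies this: iterating the equivariant isomorphism $(\R,\delta)\cong(\R\overline\otimes\R,\delta\otimes\delta)$ realizes $(\R,\delta)$ as an infinite tensor product $(\bigotimes_\N\R,\bigotimes_\N\delta)$, and sending $x$ to the sequence whose $n$-th entry places $x$ in the $n$-th tensor factor gives the required central equivariant embedding (the images asymptotically commute with any fixed element, by density in the uniform $2$-norm).

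For (2) $\Rightarrow$ (1), let $\theta\colon(\R,\delta)\to(\M^\omega\cap\M',\gamma^\omega)$ be unital and equivariant. The strategy is an Elliott two-sided intertwining in the uniform $2$-norm that compares the first-factor embedding $\M\to\MoR$ with the identity and upgrades approximate absorption to cocycle conjugacy. Two inputs drive the argument: strict separability of $\M$, which supplies the reindexation/$\varepsilon$-test machinery (using the bundle ultrapower and its relative commutant from \autoref{df:UltrapowEqWBundle}) needed to convert approximately central, approximately equivariant data into honest homomorphisms and unitaries; and the strongly McDuff structure of $\delta$, whose flip-implementing unitaries $(w_n)$ are pushed through $\theta$ to yield unitaries in $\M^\omega$ that approximately intertwine the two embeddings of $\M\overline\otimes\R$ and are approximately $\gamma^\omega$-fixed. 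Concretely I would build inductively approximately equivariant unital maps $\MoR\to\M$ and $\M\to\MoR$ whose composites are approximately inner through approximately $\gamma$-invariant unitaries, then run the intertwining so that the limit isomorphism $\Phi\colon\MoR\to\M$ intertwines $\gamma\otimes\delta$ with a cocycle perturbation $\gamma^u$, assembling $u_g$ from the approximately invariant connecting unitaries. Every estimate must be taken in $\|\cdot\|_{2,u}$ and must hold uniformly over the base of the bundle, since convergence is governed by the conditional expectation rather than by a single trace.

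The main obstacle is exactly the hard direction's cocycle. Unlike the plain McDuff theorem, where one needs only an isomorphism, here equivariance survives only up to a unitary cocycle, and producing $u$ simultaneously with the intertwining forces one to interleave the unitaries coming from the strongly McDuff flip with the connecting unitaries of the Elliott argument while verifying the cocycle identity $u_{gh}=u_g\gamma_g(u_h)$ in the limit. I expect this cocycle-tracking, carried out inside the $W^*$-bundle ultrapower and controlled uniformly across fibers, rather than any single analytic estimate, to be where the real work lies.
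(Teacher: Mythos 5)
Your proposal matches the paper's proof in all essentials: the easy direction passes to relative commutants, where cocycle perturbations act trivially, and uses the strongly McDuff structure to embed $(\R,\delta)$ centrally; the hard direction pushes the flip-implementing unitaries through $\theta\otimes\iota_{\R}$ to obtain approximately $(\gamma\otimes\delta)$-invariant unitaries conjugating $\M\overline{\otimes}\R$ into $(\M\otimes 1_{\R})^\omega$, then runs a $\|\cdot\|_{2,u}$-intertwining and extracts the cocycle as $u_g=\lim_{n}w_n(\gamma\otimes\delta)_g(w_n^*)$. The only cosmetic difference is that the paper's intertwining is one-sided (the isomorphism is the limit of $\Ad(w_n)\circ\iota_{\M}$, with no explicitly constructed approximate maps $\M\overline{\otimes}\R\to\M$), whereas you phrase it as a two-sided Elliott intertwining; both produce the same cocycle and the same verification of $u_{gh}=u_g(\gamma\otimes\delta)_g(u_h)$.
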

\begin{proof}
(1) implies (2). Observe that if $(\mathcal{N}_0,\gamma_0)$ is cocycle conjugate to $(\mathcal{N}_1,\gamma_1)$, then
$(\mathcal{N}_0^\omega\cap\mathcal{N}_0,\gamma^\omega_0)$ is \emph{conjugate} to $(\mathcal{N}_1^\omega\cap\mathcal{N}_1,\gamma^\omega_1)$.
Since $((\M\overline{\otimes}\R)^\omega\cap (\M\overline{\otimes}\R)',(\gamma\otimes\delta)^\omega)$ has a unital copy of $(\R,\delta)$,
so does $(\M^\omega\cap \M',\gamma^\omega)$.

(2) implies (1). Since $\delta$ is cocycle conjugate to a
strongly McDuff action, and the statement refers to cocycle conjugacy, it suffices to prove the result when $\delta$
itself is strongly McDuff. We assume this from now on.

\textbf{Claim:} There exists a unitary $u=(u_n)_{n\in\N}$ in the fixed point algebra
$((\M\overline{\otimes}\R)^\omega\cap (\M\overline{\otimes}1_{\R})')^{(\gamma\otimes\delta)^\omega}$
such that for every contraction $a\in \M\overline{\otimes}\R$ there exists a
contraction $ b\in (\M\otimes 1_\R)^\omega$ satisfying
\[\lim_{n\to\I} \|u_nau_n^*-b\|_{2,u}^\omega=0.\]

We prove the claim. Let $\theta\colon (\R,\delta)\to (\M^\omega\cap \M',\gamma^\omega)$ be a unital, equivariant embedding, which exists by assumption.
Denote by $\iota_{\R}\colon \R\to\M\overline{\otimes}\R$ the second factor embedding, regarded as an equivariant embedding
$(\R,\delta)\to (\M\overline{\otimes}\R,\gamma\otimes\delta)$. Since the images of $\theta$ and $\iota_\R$ commute, there is a unital, equivariant
homomorphism
\[\theta\otimes\iota_\R\colon (\R\overline{\otimes}\R,\delta\otimes \delta)\to \left((\M\overline{\otimes}\R)^\omega\cap (\M\overline{\otimes}1_{\R})',(\gamma\otimes\delta)^\omega\right),\]
which is determined by $(\theta\otimes\iota_\R)(x\otimes y)=\theta(x)\iota_\R(y)$ for all $x,y\in\R$. Using that $\delta$ is strongly McDuff,
fix a sequence $(u_n)_{n\in\N}$ in the image of $\theta\otimes\iota_\R$ satisfying
\[\lim\limits_{n\to\I}\|u_n(1_\M\otimes y)u_n^*-\theta(y)\|_{2,u}^\omega=0
\] 
and
\[
\lim\limits_{n\to\I}\|(\delta_g\otimes \delta_g)(u_n)-u_n\|_{2,u}^\omega=0
 \] 
for all $y\in\R$ and for all $g\in G$. Hence
$\lim\limits_{n\to\I}\|u_n(x\otimes y)u_n^*-(x\otimes 1_\R)\theta(y)\|_{2,u}^\omega=0$ 
for all $x\in \M$ and all $y\in \R$.

It follows that there is a well-defined unital embedding $\eta\colon\M\overline{\otimes}\R\to (\M\otimes 1_\R)^\omega$ given on
simple tensors by $\eta(x\otimes y)=(x\otimes 1_\R)\theta(y)$ for all $x\in\M$ and all $y\in \R$. In particular, for $a\in\M\overline{\otimes}\R$
with $\|a\|\leq 1$, the element $b=\eta(a)\in (\M\otimes 1_\R)^\omega$ is a contraction satisfying $\lim\limits_{n\to\I} \|u_nau_n^*-b\|_{2,u}^\omega=0$, and
the claim is proved.

Let $(a_n)_{n\in\N}$ and $(b_n)_{n\in\N}$ be $\|\cdot\|_{2,u}$-dense sequences of the unit balls of $\M\overline{\otimes}\R$ and of $\M$, respectively,
and let $(F_n)_{n\in\N}$ be an increasing sequence of finite subsets of $G$ whose union equals $G$. We will inductively apply the claim to the elements
$a_n$. Recall that $\iota_\M\colon \M\to \M\overline{\otimes}\R$ denotes the (equivariant) first tensor factor embedding.
Using the claim above, we choose, for each $n\in\N$, a unitary $v_n\in \M\overline{\otimes}\R$ and elements
$x^{(1)}_n,\ldots,x^{(n)}_n \in \M$ satisfying:
\be
\item[(a)] $\left\| \Ad(v_1\cdots v_n)(a_j)-\iota_\M(x_n^{(j)})\right\|_{2,u}\leq \frac{1}{n}$ for all $j=1,\ldots, n$;
\item[(b)] $\left\| v_n\iota_\M(b_j)-\iota_\M(b_j)v_n\right\|_{2,u}\leq \frac{1}{2^n}$ for all $j=1,\ldots, n$;
\item[(c)] $\left\| v_n\iota_\M(x_n^{(j)})-\iota_\M(x_n^{(j)})v_n\right\|_{2,u}\leq \frac{1}{2^n}$ for all $j=1,\ldots, n$;
\item[(d)] $\left\| (\gamma\otimes\delta)_g(v_n)-v_n\right\|_{2,u}\leq \frac{1}{2^n}$ for all $g\in F_n$.
\ee

For $n\in\N$, set $w_n=v_1\cdots v_n$, which is a unitary in $\M\overline{\otimes}\R$. For $j\in\N$, the sequence $(\Ad(w_n)(\iota_\M(b_j)))_{n\in\N}$
is Cauchy with respect to $\|\cdot\|_{2,u}$. Since the unit ball of $\M\overline{\otimes}\R$ is complete with respect to this norm, this sequence
converges to a contraction $\varphi(b_j)\in \M\overline{\otimes}\R$. Since the assignment $b_j\mapsto \varphi(b_j)$ is linear and isometric with
respect to $\|\cdot\|_{2,u}$, it extends to a unital isometric map $\varphi\colon \M\to \M\overline{\otimes}\R$, which is easily seen to be a $\ast$-homomorphism.
By condition (c) above, we have
\[\|\varphi(b_j)-\Ad(w_m)(b_j)\|\leq \sum_{k=m+1}^\infty \frac{1}{2^k}=\frac{1}{2^m}\]
for all $j\in\N$ and all $m\geq j$. Using condition (a) at the second step, we deduce that
\[\|\varphi(b_j)-a_j\|\leq \frac{1}{2^m} + \|\Ad(w_m)(b_j)-a_j\|\leq \frac{1}{2^m}+\sum_{k=m+1}^\infty \frac{1}{2^k}=\frac{1}{2^{m-1}}\]
for all $m\geq j$. By density of $(a_n)_{n\in\N}$ and $(b_n)_{n\in\N}$, it follows that $\varphi$ is surjective, and hence an isomorphism.

We claim that $\varphi$ is an isomorphism of $W^*$-bundles. Indeed, for $j\in\N$ we use that $E_\M$ is tracial and continuous with respect
to $\|\cdot\|_{2,u}$ to get
\[(E_{\M\overline{\otimes}\R}\circ\varphi)(b_j)=\lim_{n\to\I} (E_\M\otimes\tau_\R)(\Ad(w_n)(\iota_\M(b_j)))=E_\M(b_j).\]
From this it is clear that $E_{\M\overline{\otimes}\R}\circ\varphi=E_\M$, so $\varphi$ is an isomorphism of $W^*$-bundles.

It remains to construct the cocycle and prove that $\varphi$ is equivariant. Let $g\in G$ and $j\in\N$. By condition (d) above, the bounded sequence
$(w_n(\gamma\otimes\delta)_g(w_n^*))_{n\in\N}$ is Cauchy with respect to $\|\cdot\|_{2,u}$. Thus, it converges to an element $u_g\in \M\overline{\otimes}\R$,
which is readily checked to be a unitary since multiplication on bounded sets is jointly continuous with respect to $\|\cdot\|_{2,u}$.
Denote by $u\colon G\to \mathcal{U}(\M\overline{\otimes}\R)$ the resulting map. We claim that $u$ is a cocycle for $\gamma\otimes\delta$.
To see this, let $g,h\in G$. Then
\begin{align*}
u_g(\gamma\otimes\delta)_g(u_h)&=\lim_{n\to\I}  w_n(\gamma\otimes\delta)_g(w_n^*) (\gamma\otimes\delta)_g(w_n(\gamma\otimes\delta)_h(w_n^*))\\
&=\lim_{n\to\I} w_n(\gamma\otimes\delta)_g(w_n^*)(\gamma\otimes\delta)_g(w_n)(\gamma\otimes\delta)_{gh}(w_n^*)\\
&=\lim_{n\to\I} w_n(\gamma\otimes\delta)_{gh}(w_n^*)\\
&= u_{gh},
\end{align*}
as desired. 
Finally, we show that $\varphi$ is equivariant with respect to $\gamma$ and $\Ad(u)\circ (\gamma\otimes\delta)$. Let $g\in G$. 
Working in the point-$\|\cdot\|_{2,u}$ topology, we get:
\begin{align*}
\varphi\circ\gamma_g & = \lim_{n\to\I} \Ad(w_n)\circ\iota_\M\circ\gamma_g\\
& = \lim_{n\to\I} \Ad(w_n)\circ(\gamma\otimes\delta)_g\circ \iota_\M \\
& = \lim_{n\to\I} \Ad(w_n)\circ(\gamma\otimes\delta)_g\circ\Ad(w_n^*)\circ\Ad(w_n)\circ \iota_\M \\
& = \lim_{n\to\I} \Ad(w_n(\gamma\otimes\delta)_g(w_n^*))\circ(\gamma\otimes\delta)_g\circ \Ad(w_n)\circ \iota_\M\\
&= \Ad(u_g)\circ (\gamma\otimes\delta)_g\circ \varphi.\qedhere
\end{align*}
\end{proof}

\begin{df}\label{df:EqMcDuff}
Let $\mathcal{M}$ be a strictly separable $W^*$-bundle, let $G$ be a discrete group, let $\delta\colon G\to\Aut(\R)$ be a McDuff action,
and let $\gamma\colon G\to\Aut(\mathcal{M})$ be a fiber-wise
action. We say that $(\mathcal{M},\gamma)$ is \emph{$\delta$-McDuff} if it satisfies the equivalent conditions in \autoref{thm:W*BdleAbsR}.
\end{df}

In the case of McDuff actions as in \autoref{eg:deltanu}, absorption can be
characterized using equivariant embeddings of matrix algebras, as we show next.

\begin{thm}\label{thm:Absdeltanu}
Let $G$ be a discrete group and let $(\nu_n)_{n\in\N}$ be a sequence of unitary representations as in \autoref{eg:deltanu}.
Abbreviate $\delta^{\nu}$ to $\delta$.
Let $\M$ be a strictly separable $W^*$-bundle, and let $\gamma\colon G\to\Aut(\M)$ be a fiber-wise action. Then the following
are equivalent:
\be
\item $(\M,\gamma)$ is $\delta$-McDuff;
\item there exists a unital homomorphism
$(M_{d},\Ad(\nu)) \to (\mathcal{M}^\omega\cap \mathcal{M}',\gamma^\omega)$.
\ee
\end{thm}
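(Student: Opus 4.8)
The plan is to deduce the statement from the central-sequence characterization already established in \autoref{thm:W*BdleAbsR}. Since each $\nu_n$ has $d_n>1$, the product action $\delta=\delta^\nu=\bigotimes_n\Ad(\nu_n)$ on $\R=\overline{\bigotimes}_n M_{d_n}$ is (strongly) McDuff by the argument of \autoref{eg:deltanu}. Hence, by \autoref{thm:W*BdleAbsR} together with \autoref{df:EqMcDuff}, condition (1) is equivalent to the existence of a unital equivariant homomorphism $(\R,\delta)\to(\M^\omega\cap\M',\gamma^\omega)$. It therefore suffices to show that such a homomorphism out of $(\R,\delta)$ exists if and only if (2) holds.

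For the forward direction, given a unital equivariant homomorphism $\theta\colon(\R,\delta)\to(\M^\omega\cap\M',\gamma^\omega)$, I would simply restrict $\theta$ to the unital matricial tensor factor $M_{d_n}\otimes 1\subseteq\R$; as $\delta$ restricts to $\Ad(\nu_n)$ there, this produces the required unital equivariant embedding of $(M_{d_n},\Ad(\nu_n))$, which is (2).

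For the converse I would build the embedding of $(\R,\delta)$ one tensor factor at a time, forcing the images to commute. The key device is an equivariant reindexing argument for the central sequence bundle $(\M^\omega\cap\M',\gamma^\omega)$: using strict separability of $\M$ and completeness of the unit ball in the uniform $2$-norm, one shows that for any fixed $n$ and any separable $\gamma^\omega$-invariant subset $C\subseteq\M^\omega\cap\M'$, the equivariant embedding of $(M_{d_n},\Ad(\nu_n))$ supplied by (2) may be replaced by one whose range commutes with $C$. (This is the $W^*$-bundle analogue of the standard reindexing trick, carried out by lifting to $\ell^\I(\N,\M)$ and diagonalizing against a countable $\|\cdot\|_{2,u}$-dense subset of $C$, in the spirit of \cite{MatSat_stability_2012,Sza_strongly_2018}.) Applying this inductively yields unital equivariant embeddings $\psi_k\colon(M_{d_k},\Ad(\nu_k))\to(\M^\omega\cap\M',\gamma^\omega)$ with pairwise commuting ranges. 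These assemble into a unital equivariant homomorphism on the algebraic tensor product $\bigotimes_{k}M_{d_k}$; being isometric for the relevant uniform $2$-norms and taking values in a bundle whose unit ball is $\|\cdot\|_{2,u}^\omega$-complete, it extends strictly continuously to $\R=\overline{\bigotimes}_k M_{d_k}$, giving the sought homomorphism $(\R,\delta)\to(\M^\omega\cap\M',\gamma^\omega)$ and hence (1).

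The main obstacle I anticipate is the equivariant reindexing step: one must perform the diagonalization so that the successive copies both commute and remain inside $\M^\omega\cap\M'$ while staying compatible with $\gamma^\omega$ — only approximate $G$-invariance is available at each finite stage, with exact invariance recovered in the $\omega$-limit — and one must check that the commuting family closes up to a genuine (not merely approximate) copy of the infinite tensor product after completion. The remaining points, namely that the algebraic tensor-product map is well defined, multiplicative, isometric and equivariant, are routine.
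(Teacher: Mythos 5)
Your proposal is correct and follows essentially the same route as the paper: reduce to \autoref{thm:W*BdleAbsR}, restrict to a matricial tensor factor for the forward direction, and for the converse use a Choi--Effros lifting plus diagonalization ("speed-up") argument to produce successive equivariant copies of the matrix algebra commuting with any prescribed strictly separable invariant subalgebra, then assemble them and extend to $\R$ via uniqueness of the trace on the UHF algebra. The equivariant reindexing step you flag as the main obstacle is handled in the paper exactly as you describe, by passing to a subsequence of the lifted unital completely positive maps.
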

\begin{proof}
(1) implies (2): Since condition (1) is equivalent to the existence of a unital equivariant embedding of $(\R,\delta)$
into $(\M^\omega\cap\M',\gamma^\omega)$ by \autoref{thm:W*BdleAbsR}, and $(\R,\delta)$ has a unital and equivariant
copy of $(M_{d},\Ad(\nu))$, the result follows.

(2) implies (1): Assume that there exists a unital homomorphism
\[\varphi\colon (M_{d},\Ad(\nu))\to (\M^\omega\cap \M',\gamma^\omega).\]

\textbf{Claim:}\emph{  For any strictly separable, $\gamma^\omega$-invariant subalgebra
$\mathcal{N}\subseteq \M^\omega$, there exists a unital equivariant homomorphism
$(M_{d},\Ad(\nu))\to (\M^\omega\cap \mathcal{N}',\gamma^\omega)$.}

The proof uses a standard ``speed-up'' trick.  Let $(x_m)_{m\in\N}$
be a strictly dense subset of $\mathcal{N}$. For each $m\in\N$, find a bounded sequence $(x_m^{(n)})_{n\in\N}$ such that
$[(x_m^{(n)})_{n\in\N}]=x_m$. Using the Choi-Effros lifting theorem, find a sequence $(\varphi_n)_{n\in\N}$ of unital completely positive
maps $\varphi_n\colon M_d\to\M$ satisfying:
\bi
\item $\lim\limits_{n\to \omega}\|\varphi_n(ab)-\varphi_n(a)\varphi_n(b)\|_{2,u}=0$ for all $a,b\in M_d$;
\item $\lim\limits_{n\to \omega}\|\varphi_n(a)x-x\varphi_n(a)\|_{2,u}=0$ for all $a\in M_d$ and for all $x\in \M$;
\item $\lim\limits_{n\to \omega}\|\gamma_g(\varphi_n(a))-\varphi_n(\nu_ga\nu_g^*)\|_{2,u}=0$ for all $a\in M_d$ and for all $g\in G$.
\ei

Let $\{e_{j,k}\}_{1\leq j,k\leq d}$ be a system of matrix units for $M_d$, and let $(F_n)_{n\in\N}$ be an increasing sequence
of finite subsets of $G$ whose union equals $G$. For $n\in\N$, choose $r_n\in\N$ such that
\bi
\item $\left\|\varphi_{r_n}(e_{j,k})x_m^{(n)}-x_m^{(n)}\varphi_{r_n}(e_{j,k})\right\|_{2,u}<\frac{1}{n}$ for all $j,k=1,\ldots,d$ and for all $m\leq n$;
\item $\|\varphi_{r_n}(e_{j,k})\varphi_{r_n}(e_{i,l})-\delta_{k,i}\varphi_{r_n}(e_{j,l})\|_{2,u}<\frac{1}{n}$ for all $i,j,k,l=1,\ldots,d$;
\item $\|\gamma_g(\varphi_{r_n}(e_{j,k}))-\varphi_{r_n}(\nu_ge_{j,k}\nu_g^*)\|_{2,u}<\frac{1}{n}$ for all $g\in F_n$, and for all $j,k=1,\ldots,d$.
\ei
Denote by $\psi\colon M_d\to \M^\omega$ the map induced by the subsequence $(\varphi_{r_n})_{n\in\N}$.
It is then easy to check that $\psi$ is equivariant, and that its image
is contained in $\M^\omega\cap \mathcal{N}'$, as desired. This proves the claim.

We construct a unital equivariant homomorphism $(\R,\delta)\to (\M^\omega\cap \mathcal{M}',\gamma^\omega)$ as follows.
Let $\psi_1\colon (M_{d},\Ad(\nu))\to (\M^\omega\cap \mathcal{M}',\gamma^\omega)$
be any unital equivariant homomorphism.
Use the claim to find a unital equivariant homomorphism $\psi_2\colon (M_{d},\Ad(\nu))\to (\M^\omega\cap \mathcal{M}',\gamma^\omega)$
whose image commmutes with $\psi_1(M_{d_1})$. Proceeding inductively, we find unital equivariant homomorphisms
\[\psi\colon (M_d,\Ad(\nu))\to (\M^\omega\cap \mathcal{M}',\gamma^\omega),\]
for $k\in\N$, such that $\psi_k(M_{d})$ commutes with $\psi_j(M_{d})$ for $j\leq k-1$. These maps induce a unital equivariant homomorphism
\[\psi\colon \left(\bigotimes\limits_{n\in\N}M_{d},\bigotimes\limits_{n=1}^\I\Ad(\nu)\right) \to (\M^\omega\cap \mathcal{M}',\gamma^\omega),\]
which extends to a unital equivariant homomorphism
$(\R,\delta)\to (\M^\omega\cap \mathcal{M}',\gamma^\omega)$ by uniqueness of the trace on $\bigotimes\limits_{n\in\N}M_{d}$.
This completes the proof.
\end{proof}

Next, we need a result allowing us to conclude that a $W^*$-bundle is equivariantly McDuff whenever its fibers are.
The following strengthening of Lemma~3.17 in~\cite{BBSTWW_covering_2015} will be used.
We state the lemma for ultrapowers of $W^*$-bundles (and not ultraproducts, which we have not defined in this work), 
because this is all we will need here. With the natural definitions, the proof for ultraproducts is identical. 

\begin{lma}\label{lma:FibersToBundleCopyR}
Let $(\M,K,E)$ be a strictly separable bundle, let $G$ be a countable
group and let $\gamma\colon G\to\Aut(\M)$ be a fiber-wise action. Let $\omega$ be a free ultrafilter, and let
$S\subseteq (\M\overline{\otimes} \R)^\omega$ be a $\|\cdot\|_{2,u}^\omega$-separable, selfadjoint subset containing the unit.

Given a partition of unity $f_1,\ldots,f_m\in C(K^\omega)$, there exist projections
\[p_1,\ldots,p_m\in \left((\M\overline{\otimes} \R)^\omega\cap S'\right)^{\gamma\otimes\id_{\R}}\]
such that $\sum\limits_{j=1}^m p_j=1$ and $\tau_\lambda(p_jx)=f_j(\lambda)\tau_\lambda(x)$ for all $x\in S$ and for all $\lambda\in K^\omega$.
\end{lma}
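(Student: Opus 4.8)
The plan is to manufacture all of the projections inside a single $(\gamma\otimes\id_\R)^\omega$-fixed, commutative subalgebra of $(\M\overline{\otimes}\R)^\omega\cap S'$ of the form $\sigma(L^\infty[0,1])\vee C(K^\omega)$, where $\sigma$ comes from a reindexed copy of the second tensor factor $\R$ and $C(K^\omega)$ is the center of the ultrapower. The reason the equivariance will essentially come for free is that $\gamma\otimes\id_\R$ acts \emph{trivially} on the second tensor factor $\R$, and it fixes $C(K^\omega)$ pointwise: indeed $(\gamma\otimes\id_\R)_g\circ E=E$ forces $(\gamma\otimes\id_\R)_g$ to fix every element of the range $C(K)$ of $E$, and hence $(\gamma\otimes\id_\R)^\omega$ to fix $C(K^\omega)$. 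Thus any projection built from these two ingredients automatically lands in the fixed-point algebra.

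The first and main step is to produce a unital embedding
\[\sigma\colon\R\to\big((\M\overline{\otimes}\R)^\omega\cap S'\big)^{\gamma\otimes\id_\R}\]
satisfying the factorization identity $E^\omega(\sigma(a)x)=\tau_\R(a)\,E^\omega(x)$ for all $a\in\R$ and all $x\in S$. Such a copy of $\R$ exists in the relative commutant of the \emph{constant} algebra by \autoref{thm:W*BdleAbsR} (as $(\M\overline{\otimes}\R,\gamma\otimes\id_\R)$ is $\id_\R$-McDuff), but I would construct $\sigma$ more concretely by a reindexation applied to tail copies of the second factor: using $\R\cong\overline{\bigotimes}_{m\ge 1}\R$, a tail factor approximately commutes with, and exactly factorizes $E$ against, any element of $\M\overline{\otimes}\R$; choosing these factors large enough along a $\|\cdot\|_{2,u}^\omega$-dense sequence of $S$ (lifted to sequences) yields $\sigma$ by a standard diagonal argument. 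The point is that each tail factor lies in $1_\M\otimes\R$, hence is fixed by $\gamma\otimes\id_\R$, so $\sigma$ automatically takes values in the fixed-point algebra. Arranging the \emph{exact} factorization against elements $x$ of the ultrapower (rather than of the constant algebra) is the main technical obstacle; this is precisely the equivariant refinement of Lemma~3.17 in~\cite{BBSTWW_covering_2015}, with fixedness added at no cost because the action is trivial on the reindexed factor.

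With $\sigma$ in hand, set $g_0=0$ and $g_j=f_1+\cdots+f_j\in C(K^\omega)$, so that $0=g_0\le g_1\le\cdots\le g_m=1$. Inside $\sigma(\R)$ choose a copy of the masa $L^\infty[0,1]$, with coordinate function $t$, so that $E^\omega$ restricts on it to integration against Lebesgue measure; then $\sigma(L^\infty[0,1])\vee C(K^\omega)\cong L^\infty[0,1]\overline{\otimes}C(K^\omega)$ is commutative. For each $j$ let $e_{g_j}=\mathbb{1}_{\{t\le g_j\}}$ be the spectral projection of the selfadjoint element $t-g_j$ for $(-\infty,0]$, and put $p_j=e_{g_j}-e_{g_{j-1}}$. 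These are projections with $\sum_{j=1}^m p_j=e_{g_m}-e_{g_0}=e_1-e_0=1$. They lie in $(\M\overline{\otimes}\R)^\omega\cap S'$ because $\sigma(\R)\subseteq S'$ by construction, $C(K^\omega)$ is central, and the relative commutant (being $\|\cdot\|_{2,u}^\omega$-closed on bounded sets) is stable under the bundle operations and under spectral projections of its selfadjoint elements; and they lie in the fixed-point algebra since both $\sigma(\R)$ and $C(K^\omega)$ are fixed by $(\gamma\otimes\id_\R)^\omega$. Finally, since $E^\omega$ is $C(K^\omega)$-bimodular and $E^\omega\circ\sigma=\tau_\R(\cdot)\,1$ (apply the factorization identity with the unit $1\in S$), one computes $E^\omega(e_{g_j})=\int_0^1\mathbb{1}_{\{t\le g_j\}}\,dt=g_j$, whence $E^\omega(p_j)=g_j-g_{j-1}=f_j$.

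It remains to verify the trace-splitting $E^\omega(p_jx)=f_j\,E^\omega(x)$ for $x\in S$. I would approximate $p_j$ in $\|\cdot\|_{2,u}^\omega$ by finite sums $\sum_k h_k\,\sigma(a_k)$ with $h_k\in C(K^\omega)$ and $a_k\in L^\infty[0,1]$; using centrality of the $h_k$ together with the factorization property of $\sigma$,
\[E^\omega\Big(\big(\textstyle\sum_{k} h_k\,\sigma(a_k)\big)x\Big)=\sum_{k} h_k\,\tau_\R(a_k)\,E^\omega(x).\]
Passing to the limit is legitimate because $E^\omega$ is $\|\cdot\|_{2,u}^\omega$-to-norm contractive and right multiplication by the contraction $x$ is $\|\cdot\|_{2,u}^\omega$-continuous on bounded sets; since $\sum_k h_k\,\tau_\R(a_k)=E^\omega\big(\sum_k h_k\sigma(a_k)\big)\to E^\omega(p_j)=f_j$, this yields $E^\omega(p_jx)=f_j\,E^\omega(x)$. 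Evaluating at $\lambda\in K^\omega$ through $\tau_\lambda=\ev_\lambda\circ E^\omega$ gives $\tau_\lambda(p_jx)=f_j(\lambda)\,\tau_\lambda(x)$ for all $x\in S$ and all $\lambda\in K^\omega$, as required.
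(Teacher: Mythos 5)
Your argument is correct in substance and rests on the same two observations that drive the paper's proof: everything can be manufactured out of the second tensor factor $1_\M\otimes\R$ together with the centre $C(K^\omega)$, both of which are fixed pointwise by $(\gamma\otimes\id_\R)^\omega$, so equivariance is free; and the projections themselves come from the cumulative sums $g_j=f_1+\cdots+f_j$ applied to an increasing family of projections in $\R$ with prescribed trace, exactly as in Lemma~3.16 of~\cite{BBSTWW_covering_2015}. The difference is the order of operations. The paper stays at the finite level: for finitely many elements of $\M\overline{\otimes}\R$ and a finite subset of $G$ it produces projections in $\M\overline{\otimes}\R$ itself with exact fibrewise traces, approximate centrality and invariance (the latter because the relevant copy of $\R\cap M_k'$ sits in the second factor), and only then diagonalises into the ultrapower. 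You reindex first, building a single exact, trace-factorising, equivariant embedding $\sigma\colon\R\to\bigl((\M\overline{\otimes}\R)^\omega\cap S'\bigr)^{\gamma\otimes\id_\R}$ out of tail factors of $\R\cong\overline{\bigotimes}_{m}\R$, and then run the projection construction exactly inside the ultrapower. (Minor point: a fixed tail factor only \emph{approximately} factorises $E$ against a general element of $\M\overline{\otimes}\R$; exactness is recovered only in the ultralimit, which is what your diagonal argument delivers.) Your version buys a cleaner second half at the price of having to justify functional calculus in the ultrapower.

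That last point is the one step you should not wave at. A $W^*$-bundle ultrapower is not a von Neumann algebra, and its relative commutants are \emph{not} ``stable under spectral projections of selfadjoint elements'': the $W^*$-bundle $C(K)$ over a connected $K$ has no nontrivial projections at all. What saves your construction is the specific element $h=\sigma(t)-g_j$. By the factorisation identity, extended to $C(K^\omega)$ via bimodularity of $E^\omega$ and the hypothesis $1\in S$, one gets $\tau_\lambda\bigl(f(h)\bigr)=\int_0^1 f\bigl(s-g_j(\lambda)\bigr)\,ds$ for every continuous $f$ and every $\lambda\in K^\omega$; that is, the distribution of $h$ under $\tau_\lambda$ is Lebesgue measure on $[-g_j(\lambda),1-g_j(\lambda)]$, non-atomic \emph{uniformly} in $\lambda$. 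Hence, choosing continuous $\chi_n$ decreasing to the indicator of $(-\infty,0]$ with $\chi_n-\chi_{n'}$ supported in intervals of shrinking length, the quantity $\|\chi_n(h)-\chi_{n'}(h)\|_{2,u}^{\omega}{}^2=\sup_{\lambda}\int_0^1|\chi_n-\chi_{n'}|^2\bigl(s-g_j(\lambda)\bigr)\,ds$ tends to $0$, so $(\chi_n(h))_n$ is $\|\cdot\|_{2,u}^\omega$-Cauchy and converges in the complete unit ball of the ultrapower to the desired projection, which then lies in $S'$ and in the fixed-point algebra because both are $\|\cdot\|_{2,u}^\omega$-closed on bounded sets. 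With this inserted, and with the remaining details of the reindexation for $\sigma$ written out, your proof is complete.
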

\begin{proof}
We describe the modifications needed in the proofs of Lemma~3.16 and Lemma~3.17 in~\cite{BBSTWW_covering_2015}.
The next claim is the necessary replacement of Lemma~3.16.

\textbf{Claim:} \emph{Fix $x_1,\ldots,x_r\in \M\overline{\otimes}\R$ and a finite set $F\subseteq G$.
Given $\ep>0$, there exist projections $q_1,\ldots,q_n\in\M\overline{\otimes}\R$ such that $\sum\limits_{j=1}^m q_j=1$ and
\be
\item $\|q_jx_k-x_kq_j\|_{2,u}<\ep$ for all $j=1,\ldots,m$ and all $k=1,\ldots,r$;
\item $\|(\gamma\otimes\id_{\R})_g(q_j)-q_j\|_{2,u}<\ep$ for all $g\in F$ and all $j=1,\ldots,m$;
\item $\tau_\lambda(q_j)=f_j(\lambda)$ for all $j=1,\ldots,m$ and all $\lambda\in K$;
\item $|\tau_\lambda(q_jx_k)-f_j(\lambda)\tau_\lambda(x_k)|<\ep$ for all $j=1,\ldots,m$, all $k=1,\ldots,r$, and all $\lambda\in K$.
\ee}

To prove the claim, we use the notation of Lemma~3.16 in~\cite{BBSTWW_covering_2015}, except that what we call here $q_j$ is called $p_j$ there.
Observe that the hyperfinite II$_1$-factor $\mathcal{S}=\R\cap M_k'$ carries the trivial action of $G$.
Hence the image of the $W^*$-bundle equivariant embedding $\theta\colon C_\sigma(K,\mathcal{S})\to (\M\overline{\otimes}\R)\cap \{y_1,\ldots, y_r\}'$
is contained in the fixed point algebra of the action $\gamma\otimes\id_{\R}$. Thus, the projections $e_t$, for $t\in [0,1]$,
can be chosen to be fixed, and hence also the projections $q_1,\ldots,q_n$. This proves the claim.

We turn to the proof of the lemma. For $k\in\N$, choose a sequence $(x_k^{(n)})_{n\in\N}$ in $\M\overline{\otimes}\R$
such that $\{[(x_k^{(n)})_{n\in\N}]\colon k\in\N\}$ is dense in $S$.
For each $n\in\N$, find a partition of unity $\{f_1^{(n)}, \ldots,f_m^{(n)}\}$ of $C(K)$ such that
$[(f^{(n)}_j)_{n\in\N}]=f_j$ in $C(K^\omega)$ for $j=1,\ldots,m$. Let $(F_n)_{n\in\N}$ be an increasing sequence of finite subsets of $G$
with $G=\bigcup\limits_{n\in\N} F_n$. Using the claim, find projections
$p_1^{(n)},\ldots,p_m^{(n)}\in \M\overline{\otimes}\R$ such that
\bi
\item $\sum\limits_{j=1}^m p_j^{(n)}=1$,
\item $\|(\gamma\otimes\id_{\R})_g(p^{(n)}_j)-p^{(n)}_j\|_{2,u}<\frac{1}{n}$,
\item $\|p^{(n)}_jx_k-x_kp^{(n)}_j\|<\frac{1}{n}$,
\item $\tau_\lambda(p^{(n)}_j)=f^{(n)}_j(\lambda)$,
\item $\left|\tau_\lambda(p^{(n)}_jx_k)-f^{(n)}_j(\lambda)\tau_\lambda(x^{(n)}_k)\right|<\frac{1}{n}$
\ei
for all $g\in F_n$, for all $j=1,\ldots,m$, for all $k=1,\ldots,r$, and for all $\lambda\in K$.
Set $p_j=[(p_j^{(n)})_{n\in\N}] \in \M^\omega$ for $j=1,\ldots,m$. It is then clear that these projections satisfy the desired conditions, and the
proof is finished.
\end{proof}

The following is one of the main results of this section.

\begin{thm}\label{thm:MoRabsorbsMcDuff}
Let $(\M,K,E)$ be a strictly separable $W^*$-bundle, let $\gamma\colon G\to\Aut(M)$ be a fiber-wise action of a discrete group $G$,
and let $\delta\colon G\to \Aut(\R)$ be a McDuff action.
If $\gamma^\lambda\colon G\to\Aut(\pi_\lambda(\M))$ is cocycle conjugate to
$\gamma^\lambda\otimes\delta\colon G\to\Aut(\pi_\lambda(\M)\overline{\otimes}\R)$ for every $\lambda\in K$,
then $\gamma\otimes\id_{\mathcal{R}}\colon G\to\Aut(\mathcal{M}\overline{\otimes}\mathcal{R})$ is cocycle conjugate to
$\gamma\otimes\id_{\R}\otimes\delta\colon G\to\Aut(\M\overline{\otimes}\R\overline{\otimes}\R)$.
\end{thm}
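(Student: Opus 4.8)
The plan is to reduce the statement to the existence of a single equivariant embedding and then to build that embedding by gluing together fiber-wise data. By \autoref{thm:W*BdleAbsR} applied to the equivariant $W^*$-bundle $(\M\overline{\otimes}\R,\gamma\otimes\id_\R)$, the asserted cocycle conjugacy of $\gamma\otimes\id_\R$ with $(\gamma\otimes\id_\R)\otimes\delta=\gamma\otimes\id_\R\otimes\delta$ is \emph{equivalent} to the existence of a unital equivariant homomorphism
\[(\R,\delta)\longrightarrow \big((\M\overline{\otimes}\R)^\omega\cap(\M\overline{\otimes}\R)',(\gamma\otimes\id_\R)^\omega\big).\]
Thus the whole problem is to show that $(\M\overline{\otimes}\R,\gamma\otimes\id_\R)$ is $\delta$-McDuff in the sense of \autoref{df:EqMcDuff}.

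The fiber-wise input comes from the hypothesis. For $\lambda\in K$ the fiber of $\M\overline{\otimes}\R$ is $\pi_\lambda(\M)\overline{\otimes}\R$ with fiber-wise action $\gamma^\lambda\otimes\id_\R$. Tensoring the assumed cocycle conjugacy between $\gamma^\lambda$ and $\gamma^\lambda\otimes\delta$ by $\id_\R$ and reordering the tensor factors shows that $\gamma^\lambda\otimes\id_\R$ is cocycle conjugate to $(\gamma^\lambda\otimes\id_\R)\otimes\delta$. Hence, applying \autoref{thm:W*BdleAbsR} to the von Neumann algebra $\pi_\lambda(\M)\overline{\otimes}\R$, every fiber of $\M\overline{\otimes}\R$ is $\delta$-McDuff, so $(\R,\delta)$ embeds unitally and equivariantly into the central-sequence algebra of each fiber. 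What remains is to assemble these local embeddings into a global one.

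The assembly is carried out by a reindexation argument whose one-step input is \autoref{lma:FibersToBundleCopyR}. Given a strictly separable $(\gamma\otimes\id_\R)^\omega$-invariant set $S$ (containing $\M\overline{\otimes}\R$ and whatever partial data has already been built) together with a sufficiently fine finite partition of unity $f_1,\dots,f_m\in C(K^\omega)$, the lemma produces fixed central projections $p_1,\dots,p_m\in\big((\M\overline{\otimes}\R)^\omega\cap S'\big)^{\gamma\otimes\id_\R}$ with $\sum_j p_j=1$ and $\tau_\lambda(p_jx)=f_j(\lambda)\tau_\lambda(x)$. Over each corner $p_j$ the uniform $2$-norm mass is concentrated near a point $\lambda_j$, so the plan is to transport the fiber-wise equivariant copy of $(\R,\delta)$ available at $\lambda_j$ into $p_j(\M\overline{\otimes}\R)^\omega p_j$ and then to patch by forming $\sum_j p_j\Phi_j(\cdot)$. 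Since the $p_j$ are mutually orthogonal, fixed, and commute with $S$, this sum is a unital equivariant $\ast$-homomorphism commuting with $S$; a diagonal reindexation over an exhaustion of $G$ by finite sets and of $\M\overline{\otimes}\R$ by $\|\cdot\|_{2,u}$-dense sequences then upgrades the approximate, $S$-commuting, approximately equivariant maps to the honest embedding of the first paragraph.

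I expect the main obstacle to be precisely this local-to-global patching in the absence of any finite-dimensionality hypothesis on $K$: the number $m$ of pieces needed to localize cannot be bounded, so one must glue infinitely fine families of fiber-wise copies of $\R$ while preserving multiplicativity, unitality, equivariance, and commutation with $S$, all controlled only in the uniform $2$-norm. The device that makes this work — and the reason the statement tensors by an extra copy of $\R$ — is that the auxiliary $\R$-factor carries the trivial $G$-action and contains projections of every trace; this is exactly what allows \autoref{lma:FibersToBundleCopyR} to manufacture the required \emph{fixed} central partition-of-unity projections and what forces uniform $2$-norm convergence of the patched sum. In effect, the extra equivariantly trivial $\R$ substitutes for the covering-dimension control that would be available for a finite-dimensional base.
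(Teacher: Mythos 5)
Your proposal is correct and follows essentially the same route as the paper: reduce via \autoref{thm:W*BdleAbsR} to producing a unital equivariant embedding of $(\R,\delta)$ into the central sequence algebra of $\M\overline{\otimes}\R$, obtain fiber-wise embeddings from the hypothesis, localize over a finite open cover of $K^\omega$ using the fixed central projections from \autoref{lma:FibersToBundleCopyR}, patch by $\sum_j p_j(\cdot)$, and conclude by saturation. The only detail you gloss over is how to ``transport'' a fiber-wise embedding to a map defined on $\M$ itself that remains controlled on a whole neighborhood of $\lambda_j$ in $K^\omega$; the paper does this by composing with a conditional expectation onto a large matrix subalgebra $M_d\subseteq\R$ and lifting the resulting finite-dimensional map via Choi--Effros, which is a standard step your outline accommodates.
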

\begin{proof}
Since there is a strongly McDuff action which is cocycle conjugate to $\delta$, it suffices
to prove the result when $\delta$ itself is strongly McDuff.
Fix $\ep>0$ and finite subsets $F\subseteq G$ and $S\subseteq \M$. For $\lambda\in K$, use \autoref{thm:W*BdleAbsR} to find
a unital equivariant homomorphism
\[\varphi_\lambda\colon (\R,\delta)\to \left(\pi_\lambda(\M)^\omega\cap \pi_\lambda(\M)',(\gamma^\lambda)^\omega\right).\]
Find a matrix subalgebra $M_d \subseteq \mathcal{R}$ and a conditional expectation
$E \colon \R \to M_d$ with
\be
\item[(a)] $\|E(a^*b)-E(a)^*E(b)\|_2 <\ep$ for all $a,b\in S$;
\item[(b)] $\|\delta_g(E(a))-E(\delta_g(a))\|_2<\ep$ for all $g\in F$ and all $a\in S$.
\ee
Since $\varphi_\lambda \circ E$ factors through $M_d$, by the Choi-Effros lifting theorem we can lift it 
to a unital linear contractive map $\psi_\lambda\colon \R\to \M^\omega\cap\M'$, and find an open subset $U_\lambda\subseteq K^\omega$
satisfying
\be
\item[(a)] $\sup\limits_{\tau\in U_\lambda}\|\psi_\lambda(a^*b)-\psi_\lambda(a)^*\psi_\lambda(b)\|_{2,\tau}^\omega<\ep$ for all $a,b\in S$;
\item[(b)] $\sup\limits_{\tau\in U_\lambda}\|\gamma^\omega_g(\psi_\lambda(a))-\psi_\lambda(\delta_g(a))\|_{2,\tau}^\omega<\ep$ for all $a\in S$ and all $g\in F$.
\ee

Using compactness of $K^\omega$, find $m\in\N$ and $\lambda_1,\ldots,\lambda_m\in K$ such that $\bigcup\limits_{j=1}^m U_{\lambda_j}=K^\omega$.
Let $f_1,\ldots,f_m\in C(K^\omega)$ be a partition of unity subordinate to this cover.
Let $\widetilde{S}$ be the $\|\cdot\|_{2,u}^\omega$-separable, selfadjoint subset of $(\M\overline{\otimes}\R)^\omega$ given by
\[\widetilde{S}= \M\overline{\otimes} \R \cup \{\gamma^\omega_g(\psi_{\lambda_j}(a))\otimes x \colon g\in F, a\in S, j=1,\ldots,m, x\in\R\}.\]
Use \autoref{lma:FibersToBundleCopyR} to find projections $p_1,\ldots,p_m\in ((\M\overline{\otimes}\R)^\omega\cap \widetilde{S}')^{\gamma\otimes \id_{\R}}$
satisfying $\sum\limits_{j=1}^m p_j=1$ and $\tau_\lambda(p_jx)=f_j(\lambda)\tau_\lambda(x)$ for all $x\in S$ and for all $\lambda\in K^\omega$.
Define a map $\psi\colon \R\to (\M\overline{\otimes}\R)^\omega\cap (S\otimes \R)'$ by
\[\psi(x)=\sum_{j=1}^m p_j(\psi_{\lambda_j}(x)\otimes 1_{\R})\]
for all $x\in \R$.

It is routine to check that
\be
\item[(a')] $\|\psi(a^*b)-\psi(a)^*\psi(b)\|_{2,u}^\omega<\ep$ for all $a,b\in S$;
\item[(b')] $\|\gamma^\omega_g(\psi(a))-\psi(\delta_g(a))\|_{2,u}^\omega<\ep$ for all $a\in S$ and all $g\in F$.
\ee

Since $F\subseteq G$, $S\subseteq\M$ and $\ep>0$ are arbitrary, countable saturation of $\M\overline{\otimes}\R$ implies that there
exists a unital, equivariant homomorphism $(\R,\delta)\to (\M\overline{\otimes}\R\cap (\M\overline{\otimes}\R)^{\prime},\gamma^\omega)$,
which implies the result by \autoref{thm:W*BdleAbsR}.
\end{proof}

In later sections, it will be crucial to know that the extra copy of the trivial
action on $\R$ can sometimes be dispensed of, whenever $\delta$ absorbs $\id_\R$.
In \autoref{thm:McDuffFromFiberToBundle}, we show that this is the case
for $W^*$-bundles whose base space has finite covering dimension.
Despite being a purely $W^*$-algebraic statement, its proof surprisingly
factors through actions on \ca s, specifically on dimension drop algebras.
We therefore make a small digression from the theme of equivariant $W^*$-bundles
to prove some facts about actions on dimension drop algebras that will be needed
in the sequel.

\begin{nota}\label{nota:DimDrop}
Given $m,n\in\N$, we denote by $I_{m,n}$ the dimension drop algebra:
\[I_{m,n}=\{f\in C([0,1],M_m\otimes M_n) \colon f(0)\in M_m\otimes 1, f(1)\in 1\otimes M_n\}.\]
If $G$ is a discrete group and $u\colon G\to\U(M_m)$ and $v\colon G\to\U(M_n)$ are unitary
representations, we denote by $\gamma_{u,v}\colon G\to\Aut(I_{m,n})$ the restriction of
$\id_{C([0,1])}\otimes \Ad(u)\otimes \Ad(v)\colon G\to \Aut(C([0,1]\otimes M_m\otimes M_n))$
to the invariant subalgebra $I_{m,n}$.
\end{nota}

Equivariant maps of order zero are a crucial tool to deal with dimension drop algebras.
The following definition is by now standard.


\begin{df}\label{df:oz}
Let $\psi\colon A\to B$ be a completely positive map between \ca s $A$ and $B$. We say that $\psi$ is \emph{order zero}
if $\psi(a)\psi(b)=0$ whenever $a,b\in A_+$ satisfy $ab=0$.
\end{df}

In the next proposition, for $n\in\N$ we write $CM_n$ for the cone $C_0((0,1],M_n)$ over $M_n$, and we write
$\widetilde{CM_n}$ for its minimal unitization.

\begin{prop}\label{prop:UnivPropDimDropUV}
Let $G$ be a discrete group, let $m,n\in\N$, let $u\colon G\to\U(M_m)$ and $v\colon G\to \U(M_n)$ be
unitary representations, and let $\beta\colon G\to\Aut(B)$ be an action of $G$ on a unital \ca\ $B$.
Then the following are equivalent:
\be\item There is a unital equivariant homomorphism $\pi\colon (I_{m,n},\gamma_{u,v})\to (B,\beta)$.
\item There exist equivariant completely positive contractive order zero maps
\[\xi\colon (M_m,\Ad(u))\to (B,\beta), \ \ \mbox{ and } \ \ \eta\colon (M_n,\Ad(v))\to (B,\beta)\]
with commuting ranges, that satisfy $\xi(1)+\eta(1)=1$.\ee
\end{prop}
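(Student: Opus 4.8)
The plan is to treat the two implications separately, the second being the substantial one.

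For (1) $\Rightarrow$ (2), I would read off the two maps directly from $\pi$. Set $\xi_0\colon M_m\to I_{m,n}$ and $\eta_0\colon M_n\to I_{m,n}$ by $\xi_0(a)=(1-t)(a\otimes 1)$ and $\eta_0(b)=t(1\otimes b)$, where $t\in C([0,1])$ is the coordinate function; both land in $I_{m,n}$ because of the prescribed boundary behaviour at $0$ and $1$. Each of $\xi_0,\eta_0$ is of the form ``positive central contraction times a $\ast$-homomorphism'', hence completely positive contractive order zero, and their ranges commute since $M_m\otimes 1$ and $1\otimes M_n$ commute in $M_m\otimes M_n$. Then $\xi=\pi\circ\xi_0$ and $\eta=\pi\circ\eta_0$ are order zero (an order zero map composed with a $\ast$-homomorphism is order zero), have commuting ranges, and satisfy $\xi(1)+\eta(1)=\pi\big((1-t)1+t\,1\big)=\pi(1)=1$. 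Equivariance of $\xi,\eta$ is immediate once I observe that $\gamma_{u,v,g}(\xi_0(a))=\xi_0(\Ad(u_g)a)$ and $\gamma_{u,v,g}(\eta_0(b))=\eta_0(\Ad(v_g)b)$ and invoke equivariance of $\pi$.

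For (2) $\Rightarrow$ (1) --- the heart of the argument --- I would reconstruct $\pi$ from $\xi,\eta$. By the Winter--Zacharias structure theorem the order zero maps integrate to $\ast$-homomorphisms $\Phi\colon CM_m\to B$ and $\Psi\colon CM_n\to B$ with $\Phi(\iota\otimes a)=\xi(a)$ and $\Psi(\iota\otimes b)=\eta(b)$, where $\iota$ is the canonical generator of $C_0((0,1])$; the commuting-ranges hypothesis forces the ranges of $\Phi$ and $\Psi$ to commute, and $\Phi(\iota\otimes 1)=\xi(1)$, $\Psi(\iota\otimes 1)=\eta(1)$ are positive contractions summing to $1$. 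The dense $\ast$-subalgebra of $I_{m,n}$ generated by $\xi_0(M_m)\cup\eta_0(M_n)$ is spanned by the monomials $(1-t)^p t^q(e_{ij}\otimes e_{kl})$ (with $p\geq 1$ for a nontrivial $M_m$-factor and $q\geq 1$ for a nontrivial $M_n$-factor, matching exactly the boundary conditions of $I_{m,n}$), and I would define $\pi$ on these by $\pi\big((1-t)^p t^q(e_{ij}\otimes e_{kl})\big)=\Phi(\iota^p\otimes e_{ij})\,\Psi(\iota^q\otimes e_{kl})$, with the convention that a trivial matrix factor carries no $\Phi$- or $\Psi$-factor. Multiplicativity and $\ast$-preservation then follow formally from $\Phi,\Psi$ being commuting-range homomorphisms.

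The step I expect to be the main obstacle is checking that this assignment is \emph{well defined}, i.e.\ compatible with the unique nontrivial relation $(1-t)+t=1$ among the coefficient functions. Expanding $(1-t)^p t^q=(1-t)^{p+1}t^q+(1-t)^p t^{q+1}$ and applying the tentative formula, consistency reduces precisely to the identity $\Phi(\iota\otimes 1)+\Psi(\iota\otimes 1)=1$, which is exactly the hypothesis $\xi(1)+\eta(1)=1$ (here one also uses that the central elements $\xi(1),\eta(1)$ commute with both ranges). Thus $\xi(1)+\eta(1)=1$ is the crucial ingredient, and pinpointing this is the whole content of the direction. Once well-definedness is settled, boundedness is automatic: for self-adjoint $x$ in the dense subalgebra the spectral radius of $\pi(x)$ in $B$ is at most $\lim_n\|x^n\|^{1/n}=\|x\|_{I_{m,n}}$, so $\pi$ is contractive and extends to a $\ast$-homomorphism $\pi\colon I_{m,n}\to B$. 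It is unital since $\pi(1)=\xi(1)+\eta(1)=1$. Finally, equivariance follows by uniqueness: $\pi\circ\gamma_{u,v,g}$ and $\beta_g\circ\pi$ are homomorphisms agreeing on the generators $\xi_0(M_m)\cup\eta_0(M_n)$, using equivariance of $\xi,\eta$ together with $\gamma_{u,v,g}(\xi_0(a))=\xi_0(\Ad(u_g)a)$ and $\gamma_{u,v,g}(\eta_0(b))=\eta_0(\Ad(v_g)b)$, hence coincide on all of $I_{m,n}$.
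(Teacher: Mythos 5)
Your proof of (1) $\Rightarrow$ (2) is correct and essentially identical to the paper's (incidentally, the paper's displayed formulas $\varphi(x)(t)=t(x\otimes 1)$, $\psi(y)(t)=(1-t)(1\otimes y)$ have $t$ and $1-t$ interchanged relative to the boundary conditions of $I_{m,n}$; your $\xi_0(a)=(1-t)(a\otimes 1)$, $\eta_0(b)=t(1\otimes b)$ is the correct version). For (2) $\Rightarrow$ (1) you use the same two ingredients as the paper --- the Winter--Zacharias structure theorem converting $\xi,\eta$ into commuting homomorphisms on the cones $CM_m$ and $CM_n$, and the identification of $\xi(1)+\eta(1)=1$ as the decisive relation --- but you assemble them differently. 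The paper forms the unital $C^*$-algebra $\widetilde{CM_m}\otimes\widetilde{CM_n}$, checks that $1\otimes 1-(f_0\otimes 1_m)\otimes 1-1\otimes(f_0\otimes 1_n)$ lies in the kernel of $\widetilde{\sigma}\otimes\widetilde{\theta}$, and then quotes that the quotient by the ideal this element generates is $(I_{m,n},\gamma_{u,v})$. You instead define $\pi$ directly on a dense $\ast$-subalgebra of $I_{m,n}$ spanned by monomials and try to extend by continuity.

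The gap is in the continuity step. A $\ast$-homomorphism defined on a dense, non-closed $\ast$-subalgebra of a $C^*$-algebra is \emph{not} automatically bounded: the character $p\mapsto p(2)$ on the polynomials inside $C([0,1])$ is a unital $\ast$-homomorphism to $\C$ of norm greater than $1$ (indeed unbounded on the self-adjoint part). Your estimate $\|\pi(x)\|=r(\pi(x))\leq\lim_n\|x^n\|^{1/n}$ is circular: $r(\pi(x))=\lim_n\|\pi(x^n)\|^{1/n}$, and to dominate this by $\lim_n\|x^n\|^{1/n}$ you already need a bound $\|\pi(y)\|\leq C\|y\|$ on the dense subalgebra, while spectral permanence is unavailable because inverses computed in $I_{m,n}$ need not lie in that subalgebra. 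The repair is exactly the paper's detour: run your monomial computation inside the honest $C^*$-algebra $\widetilde{CM_m}\otimes\widetilde{CM_n}$ (unambiguous, as the cones are nuclear), where $\widetilde{\Phi}\otimes\widetilde{\Psi}$ is automatically contractive, and then use the presentation of $I_{m,n}$ as the quotient of this tensor product by the ideal generated by $1\otimes 1-(f_0\otimes 1_m)\otimes 1-1\otimes(f_0\otimes 1_n)$; your well-definedness computation is then precisely the verification that this generator is killed. The remaining steps (the reduction of all linear relations to $(1-t)+t=1$, and the equivariance argument via agreement on generators) are sound once $\pi$ exists as a genuine homomorphism on $I_{m,n}$.
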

\begin{proof}
(1) implies (2): Define maps
\[\varphi\colon (M_m,\Ad(u))\to (I_{m,n},\gamma_{u,v}) \ \ \mbox{ and } \ \
\psi\colon (M_n,\Ad(v)) \to (I_{m,n},\gamma_{u,v})\]
by $\varphi(x)(t)=t(x\otimes 1)$ and $\psi(y)(t)=(1-t)(1\otimes y)$ for all $x\in M_m$, for all
$y\in M_n$, and all $t\in [0,1]$. Then $\varphi$ and $\psi$ are equivariant
completely positive contractive order zero maps with commuting ranges that satisfy $\varphi(1)+\psi(1)=1$. If a homomorphism
$\pi$ as in the statement of (1) exists, then the maps $\xi$ and $\eta$ as in (2) are obtained by setting
$\xi=\pi\circ\varphi$ and $\eta=\pi\circ\psi$.

(2) implies (1):
Denote by $f_0\in C_0((0,1])$ the inclusion of $(0,1]$ into $\C$.
Define equivariant homomorphisms
\[\sigma\colon (CM_m),\id\otimes \Ad(u))\to (B,\beta), \ \ \mbox{ and } \ \
 \theta\colon (CM_n),\id\otimes \Ad(v))\to (B,\beta)
\]
by $\sigma(f_0\otimes x)=\xi(x)$ and $\theta(f_0\otimes y)$ for all $x\in M_m$
and $\theta(f_0\otimes y)=\eta(y)$ for all $y\in M_n$.
Denote by $\widetilde{\sigma}$ and $\widetilde{\theta}$ the unital equivariant
extensions of $\sigma$ and $\theta$, respectively, to the minimal unitizations.
Then the ranges of $\widetilde{\sigma}$ and $\widetilde{\theta}$ commute,
so they define a unital homomorphism
\[\widetilde{\sigma}\otimes \widetilde{\theta} \colon \widetilde{CM_m}\otimes \widetilde{CM_n}\to B.\]
It is clear that the $G$-invariant element $1\otimes 1 - (f_0\otimes 1_m)\otimes 1 - 1\otimes (f_0\otimes 1_n)$
belongs to the kernel of $\widetilde{\sigma}\otimes \widetilde{\theta}$. Since the quotient of $\widetilde{CM_m}\otimes \widetilde{CM_n}$
by the (automatically $G$-invariant) ideal generated by $1\otimes 1 - (f_0\otimes 1_m)\otimes 1 - 1\otimes (f_0\otimes 1_n)$
is isomorphic to $(I_{m,n},\gamma_{u,v})$, there is an induced unital equivariant homomorphism $\pi\colon (I_{m,n},\gamma_{u,v})\to (B,\beta)$,
as desired.
\end{proof}

\begin{lma}\label{lma:NonPrimeDimDrop}
Let $G$ be a discrete group, let $m,n\in\N$, let $u\colon G\to\U(M_m)$ and $v\colon G\to \U(M_n)$ be
unitary representations, and assume that $u$ is unitarily equivalent to a tensor factor of $v$. Then
there is a unital equivariant homomorphism $(M_m,\Ad(u))\to (I_{m,n},\gamma_{u,v})$.
\end{lma}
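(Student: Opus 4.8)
The plan is to build the homomorphism explicitly as a path of inner perturbations of a fixed corner embedding, arranged so that equivariance is automatic. First I would put $v$ into normal form. Since $u$ is unitarily equivalent to a tensor factor of $v$, there are $k\in\N$ with $n=mk$, a unitary representation $w\colon G\to\U(M_k)$, and a unitary $z\in\U(M_n)$ with $zv_gz^*=u_g\otimes w_g$ under a fixed identification $M_n\cong M_m\otimes M_k$. Conjugation by the constant unitary $1_m\otimes z$ is an automorphism of $I_{m,n}$: it preserves both boundary conditions, since it sends $a\otimes 1_n$ to $a\otimes 1_n$ and $1_m\otimes b$ to $1_m\otimes zbz^*$. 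A direct computation shows it intertwines $\gamma_{u,v}$ with $\gamma_{u,v'}$, where $v'_g=u_g\otimes w_g$, so it is an equivariant isomorphism $(I_{m,n},\gamma_{u,v})\cong(I_{m,n},\gamma_{u,v'})$; composing with it at the end, I may assume $v_g=u_g\otimes w_g$ from the start. I then identify the fibre algebra as $M_m\otimes M_n\cong M_m\otimes M_m\otimes M_k$, on which $\gamma_{u,v}$ acts fibrewise by $\Ad(u_g)\otimes\Ad(u_g)\otimes\Ad(w_g)$.

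Next I would produce a continuous path of unitaries $(V_t)_{t\in[0,1]}$ in $M_m\otimes M_m$ with $V_0=1$, with $V_1=F$ the tensor flip, and — crucially — with $V_t$ commuting with $u_g\otimes u_g$ for every $g\in G$ and every $t$. Such a path exists because both $1$ and $F$ lie in the relative commutant $\mathcal{C}=(u\otimes u)(G)'\subseteq M_m\otimes M_m$: the flip commutes with every symmetric tensor $u_g\otimes u_g$. Since $\mathcal{C}$ is a finite-dimensional $C^*$-algebra, its unitary group is connected, so a path inside $\U(\mathcal{C})$ joining $1$ to $F$ exists. I then define $\pi\colon M_m\to I_{m,n}$ by $\pi(x)(t)=(V_t\otimes 1_k)(x\otimes 1_m\otimes 1_k)(V_t^*\otimes 1_k)$. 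Unitality and the homomorphism property are immediate, as each $\pi(\cdot)(t)$ is $\Ad(V_t\otimes1_k)$ precomposed with the first-factor embedding $x\mapsto x\otimes 1_m\otimes 1_k$, and continuity follows from continuity of $t\mapsto V_t$. The boundary conditions hold because $V_0=1$ gives $\pi(x)(0)=x\otimes1_n\in M_m\otimes1_n$, while $V_1=F$ gives $\pi(x)(1)=1\otimes x\otimes1_k\in1_m\otimes M_n$.

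The only place where the hypothesis is used essentially, and the step I expect to require the real care, is equivariance. Expanding $\pi(u_gxu_g^*)(t)$ and $(\gamma_{u,v})_g(\pi(x)(t))$, the third tensor factor (carrying $w_g$) acts trivially, since $\pi(x)(t)$ is supported on the first two factors, so the required identity reduces to $V_t(u_gxu_g^*\otimes1)V_t^*=(u_g\otimes u_g)V_t(x\otimes1)V_t^*(u_g^*\otimes u_g^*)$ for all $x\in M_m$. This is precisely what $V_t\in(u\otimes u)(G)'$ guarantees: commuting $V_t$ past $u_g\otimes u_g$ turns the right-hand side into $V_t(u_gxu_g^*\otimes u_gu_g^*)V_t^*=V_t(u_gxu_g^*\otimes1)V_t^*$, matching the left-hand side. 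Thus $\pi$ is a unital equivariant homomorphism $(M_m,\Ad(u))\to(I_{m,n},\gamma_{u,v})$, as required. The one subtlety worth verifying carefully is that $\mathcal{C}$ genuinely contains the flip and has path-connected unitary group; both are elementary, but together they are exactly what reconciles the competing demands of correct endpoints and fibrewise equivariance along the whole path.
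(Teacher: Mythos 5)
Your proof is correct and follows essentially the same route as the paper: reduce to $v=u\otimes w$, join the identity to the tensor flip by a unitary path inside a finite-dimensional invariant subalgebra, and conjugate the first-factor embedding along that path. The only (immaterial) difference is that you take the path in $(u\otimes u)(G)'\otimes 1_k$ whereas the paper uses the fixed-point algebra of $\Ad(u\otimes u\otimes w)$ on all of $M_m\otimes M_m\otimes M_k$; both contain $1$ and the flip and have connected unitary group.
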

\begin{proof}
Let $k=n/m$.
Upon replacing $v$ with a unitarily equivalent representation (which yields a conjugate action),
we may assume that there exists a unitary representation $w\colon G\to\U(M_k)$ such that
$v=u\otimes w$.

Denote by $\{e_{i,j}\colon 1\leq i,j\leq m\}$ the matrix units of $M_m$.
Set
\[
\nu_1=\sum_{i,j=1}^m e_{i,j}\otimes e_{j,i}\otimes 1_k\in M_m\otimes M_m\otimes M_k,
\]
and observe that $\nu_1$ is a unitary implementing the tensor flip $a\otimes b\otimes c\mapsto b\otimes a\otimes c$
on $M_m\otimes M_m\otimes M_k$. Moreover, $\nu_1$ is $\Ad(u\otimes u\otimes w)$-invariant.
Since $(M_m\otimes M_m\otimes M_k)^{\Ad(u\otimes u\otimes w)}$ is
finite dimensional, its unitary group is connected and hence there is a continuous
unitary path $\nu\colon [0,1]\to (M_m\otimes M_m\otimes M_k)^{\Ad(u\otimes u\otimes w)}$ satisfying $\nu(0)=1$ and $\nu(1)=\nu_1$.
Define $\iota\colon M_m\to I_{m,n}$ by $\iota(a)(t)=\nu(t)(a\otimes 1)\nu(t)^*$ for all $a\in M_m$ and all $t\in [0,1]$.
It is immediate to check that $\iota$ is a unital, equivariant homomorphism, proving the lemma.
\end{proof}

\begin{prop}\label{prop:EqOzCommRges}
Let $G$ be a discrete group, let $B$ be a \ca, let $\beta\colon G\to\Aut(B)$ be an action,
let $d,n\in\N$, and let $\nu\colon G\to\U(M_d)$ be a unitary representation. Let
$\varphi_1,\ldots,\varphi_n\colon (M_d,\Ad(\nu))\to (B,\beta)$ be completely positive contractive
equivariant order zero maps with commuting ranges and such that $\sum\limits_{j=1}^n\varphi_j(1)$ is a contraction.
Then there exists a completely positive contractive equivariant order zero map
$\psi\colon (M_d,\Ad(\nu))\to (B,\beta)$ with $\psi(1)=\sum\limits_{j=1}^n\varphi_j(1)$.

In particular, when $B$ is unital and $\sum\limits_{j=1}^n\varphi_j(1)=1$, then there exists a unital equivariant homomorphism
$\psi\colon (M_d,\Ad(\nu))\to (B,\beta)$.
\end{prop}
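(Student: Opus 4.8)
The plan is to view each $\varphi_j$ through the structure theorem for order zero maps, so that it corresponds to an equivariant $*$-homomorphism out of the cone $CM_d=C_0((0,1])\otimes M_d$, and then to \emph{combine the maps two at a time}. Concretely, I would argue by induction on $n$, proving the slightly stronger statement that $\psi$ can be chosen with $\psi(M_d)\subseteq C^*\!\left(\bigcup_{j=1}^n\varphi_j(M_d)\right)$. The base case $n=1$ is trivial (take $\psi=\varphi_1$). For the inductive step, I would first apply the case $n-1$ to $\varphi_1,\dots,\varphi_{n-1}$ to obtain an equivariant c.p.c. order zero map $\psi'$ with $\psi'(1)=\sum_{j<n}\varphi_j(1)$ and range inside $C^*(\bigcup_{j<n}\varphi_j(M_d))$. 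Since this subalgebra commutes with $\varphi_n(M_d)$, the maps $\psi'$ and $\varphi_n$ again have commuting ranges and satisfy $\psi'(1)+\varphi_n(1)\leq 1$, so the whole problem reduces to the case $n=2$.

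For $n=2$, write $\xi=\varphi_1$, $\eta=\varphi_2$, with supports $h_1=\xi(1)$, $h_2=\eta(1)$. As in the proof of \autoref{prop:UnivPropDimDropUV}, the (unitized) supporting homomorphisms of $\xi$ and $\eta$ assemble, using the commuting ranges and nuclearity of the cone, into an equivariant homomorphism $\Theta\colon \widetilde{CM_d}\otimes\widetilde{CM_d}\to B$ sending the first and second copies of $f_0\otimes 1$ to $h_1$ and $h_2$ respectively. Identifying $\widetilde{CM_d}\otimes\widetilde{CM_d}$ with the functions $F\in C([0,1]^2,M_d\otimes M_d)$ satisfying $F(0,t)\in 1\otimes M_d$ and $F(s,0)\in M_d\otimes 1$, I would build an equivariant c.p.c. order zero map
\[\psi_0\colon (M_d,\Ad(\nu))\to \big(\widetilde{CM_d}\otimes\widetilde{CM_d},\,\id\otimes\Ad(\nu)\otimes\id\otimes\Ad(\nu)\big)\]
by the formula $\psi_0(a)(s,t)=\min(s+t,1)\,U(r)(a\otimes 1)U(r)^*$, where $r=r(s,t)=t/(s+t)$ (and $\psi_0(a)(0,0)=0$), and $U\colon[0,1]\to \big(M_d\otimes M_d\big)^{\Ad(\nu\otimes\nu)}$ is a continuous unitary path from $1$ to the tensor flip, which exists because that fixed-point algebra is finite dimensional and hence has connected unitary group---this is exactly the device used in \autoref{lma:NonPrimeDimDrop}. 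Setting $\psi=\Theta\circ\psi_0$ then yields an equivariant c.p.c. order zero map (a homomorphism composed with an order zero map is again order zero) whose range lies in $C^*(\xi(M_d)\cup\eta(M_d))$, and since $h_1+h_2\leq 1$, continuous functional calculus gives $\psi(1)=\Theta(\min(s+t,1))=\min(h_1+h_2,1)=h_1+h_2$, as required.

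The hard part, and the place where the hypothesis $\sum_j\varphi_j(1)\leq 1$ is really used, is the construction of $\psi_0$: one must check that the formula lands in $\widetilde{CM_d}\otimes\widetilde{CM_d}$ (i.e. that it matches the boundary conditions $F(0,t)=1\otimes ta$ and $F(s,0)=sa\otimes 1$), is continuous at the origin (guaranteed by the factor $\min(s+t,1)\to 0$), is equivariant (guaranteed by $U(r)$ lying in the fixed-point algebra), and is order zero with support the \emph{contraction} $\min(s+t,1)$ rather than the function $s+t$---it is precisely this truncation that allows $\Theta$ to send the support to exactly $h_1+h_2$. The flip-to-identity path $U$ is what reconciles the two supporting homomorphisms on the region where the supports $h_1,h_2$ overlap; if the supports were orthogonal the naive sum $\xi+\eta$ would already be order zero, and $U$ is exactly the correction needed in the overlapping case. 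Finally, the \emph{in particular} statement is immediate: if $\sum_j\varphi_j(1)=1$ then $\psi(1)=1$, and a unital c.p.c. order zero map out of $M_d$ is automatically a unital homomorphism.
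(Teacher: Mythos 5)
Your argument is correct, but at the decisive step it takes a genuinely different route from the paper. Both proofs reduce to $n=2$ by induction and both ultimately rely on the same device of a unitary path from $1$ to the tensor flip inside $(M_d\otimes M_d)^{\Ad(\nu\otimes\nu)}$ (the paper's \autoref{lma:NonPrimeDimDrop}). The difference is in how the two supports $h_1,h_2$ are reconciled. The paper first \emph{normalizes}: setting $h=\varphi_1(1)+\varphi_2(1)$, it shows by a Pedersen-style strict-convergence argument that $\varphi_j(a)(h^2+\tfrac1n)^{-1/2}$ converges strictly in $M(B)$, producing order zero maps $\psi_1,\psi_2$ into $M(B)$ with $\psi_1(1)+\psi_2(1)=1$; it then invokes the universal property of the dimension drop algebra $I_{d,d}$ (\autoref{prop:UnivPropDimDropUV}) together with \autoref{lma:NonPrimeDimDrop} to get a unital equivariant homomorphism $M_d\to M(B)$, and finally cuts down by $h$. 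You instead stay ``upstairs'' in $\widetilde{CM_d}\otimes\widetilde{CM_d}$ and build the universal order zero map $\psi_0$ with support $\min(s+t,1)$ directly; the truncation does the work of the paper's renormalization by $h^{-1}$, so you never need the strict-limit analysis, which is the bulk of the paper's proof. This is arguably cleaner and more self-contained. Two points deserve to be spelled out in a final write-up, though neither is a gap: (i) the identification of $\widetilde{CM_d}\otimes\widetilde{CM_d}$ with the algebra of functions on $[0,1]^2$ satisfying the two boundary conditions (a standard $C(X)$-algebra fiber comparison, implicitly used by the paper as well in \autoref{prop:UnivPropDimDropUV}); and (ii) when $B$ is nonunital, $\Theta=\widetilde{\sigma}\otimes\widetilde{\theta}$ a priori takes values in $M(B)$, but since $\psi_0(a)$ vanishes at the origin it lies in the ideal $CM_d\otimes\widetilde{CM_d}+\widetilde{CM_d}\otimes CM_d$, which $\Theta$ carries into $B$ (indeed into $C^*(\varphi_1(M_d)\cup\varphi_2(M_d))$, which is exactly the range control your induction requires, and which the paper secures instead by its ``without loss of generality $B$ is generated by the images'' reduction).
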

\begin{proof}
Using finite induction, it is enough to prove the statement for $n=2$.
Also, without loss of generality we can assume
that $B$ is generated as a \ca\ by the images of $\varphi_1$ and $\varphi_2$.
Set $h=\varphi_1(1)+\varphi_2(1)$. Then $h$ is a strictly positive central element in $B$.
Fix $a \in M_d$. For $j=1,2$ and for $n \in \N$, set
\[
z_{n,j}(a) = \varphi_j(a) \left ( h^2+\frac{1}{n} \right )^{-1/2} \, .
\]
Fix $j\in \{1,2\}$.

\textbf{Claim:}\emph{ for any $b \in B$, the sequences $(bz_{n,j}(a))_{n\in\N}$ and $(z_{n,j}(a)b)_{n\in\N}$ converge
in $B$.} 
The proof follows the lines of the proof of Lemma~1.4.4 from~\cite{Ped_algebras_1979}.
Since $\varphi_j(a)^*\varphi_j(a) \leq h^2$, it follows that
\[
z_{n,j}(a)^*z_{n,j}(a) \leq h^2 \left ( h^2+\frac{1}{n} \right )^{-1} \leq 1
\]
and hence $z_{n,j}(a)$ is contractive for all $n\in\N$. It thus suffices to show that the set
\[\{b\in B\colon \|b\|\leq 1 \mbox{ and } (bz_{n,j}(a))_{n\in\N} \mbox{ and } (z_{n,j}(a)b)_{n\in\N} \mbox{ are Cauchy}\}\]
is dense in the unit ball of $B$.
As $(h^{1/k})_{k\in\N}$ is an approximate unit for $B$, it suffices to check that $(h^{1/k}bz_{n,j}(a))_{n\in\N}$ and
$(z_{n,j}(a)bh^{1/k})_{n\in\N}$ are Cauchy for any $b$ in the unit ball of $B$ and any fixed $k$. For any $n,m \in \N$, set
\[
d_{n,m} =  \left ( h^2+\frac{1}{n} \right )^{-1/2} - \left ( h^2+\frac{1}{m} \right )^{-1/2} \, .
\]
Note that for any $\alpha > 1$, we have $h^{\alpha} d_{n,m} \to 0$ as $n,m \to \infty$
(which can be seen by using spectral theory and considering those as functions on the spectrum of $h$). Thus,
 we have
\begin{align*}
\|h^{1/k}bz_{n,j}(a) - h^{1/k}bz_{m,j}(a) \|^2 &=  \|h^{1/k}b\varphi_j(a)d_{n,m}\|^2 \\
	&= \|d_{n,m} \varphi_j(a)^*h^{1/k}b^*bh^{1/k}\varphi_j(a)d_{n,m}\| \\
	&\leq \|d_{n,m}h^{2(1+1/k)}d_{n,m}\| \\
	&= \|h^{1+1/k}d_{n,m}\|
\end{align*}
and hence $(h^{1/k}bz_{n,j}(a))_{n\in\N}$ is Cauchy. One shows analogously that the sequence $(z_{n,j}(a)h^{1/k}b)_{n\in\N}$ is Cauchy as well,
thus proving the claim.

It follows that the sequence $(z_{n,j}(a))_{n\in\N}$ converges strictly in $M(B)$. Set
\[\psi_j(a) = \lim_{n \to \infty}z_{n,j}(a),\]
where the limit is taken to be in the strict topology. Then the resulting map $\psi_j\colon M_d\to M(B)$ is completely
positive contractive order zero, and since $h$ is $G$-invariant, it is immediate that $\psi_j$ is also
$G$-equivariant. Moreover, $\psi_1(1)+\psi_2(1)=1$.
Extend $\beta$ to an action of $G$ on $M(B)$, which we also denote by $\beta$.
By \autoref{prop:UnivPropDimDropUV}, there is a unital, equivariant homomorphism
\[\overline{\pi}\colon (I_{d,d},\gamma_{\nu,\nu})\to (M(B),\beta).\]

Use \autoref{lma:NonPrimeDimDrop} to find a unital equivariant homomorphism $\iota\colon (M_d,\Ad(\nu))\to (I_{d,d},\gamma)$.
Then $\widetilde{\pi}\circ\iota\colon (M_d,\Ad(\nu))\to (M(B),\beta)$ is a unital equivariant homomorphism.
The proof is completed by letting $\psi\colon (M_d,\Ad(\nu))\to (B,\beta)$ be the equivariant completely
positive contractive order zero map given by $\psi(a)=\widetilde{\pi}(\iota(a))(\varphi_1(1)+\varphi_2(1))$ for
all $a\in M_d$.

The last part of the statement is immediate, since a unital order zero map is a homomorphism, 
by Theorem~3.2 in~\cite{WinZac_completely_2009}.
\end{proof}

We recall (\cite{WinZac_completely_2009}) that a completely positive contractive map
$\psi\colon A\to B$ between \uca s $A$ and $B$ is order zero if and only if
it satisfies $\psi(1)\psi(a^*a)=\psi(a)^*\psi(a)$ for all $a\in A$.

Our next result is stated for product type actions as in \autoref{eg:deltanu}, but in
\autoref{cor:DimKGamenable} we will see that the same result is valid for an arbitrary
McDuff action, as long as the acting group is amenable.

\begin{thm} \label{thm:McDuffFromFiberToBundle}
Let $\mathcal{M}$ be a strictly separable $W^*$-bundle over a compact metrizable space $K$,
let $G$ be a countable discrete group, let $\nu\colon G\to \U(M_{d})$ be a unitary representation,
and let $\gamma\colon G\to\Aut(\mathcal{M})$ be a fiber-wise
action. Suppose that $\dim(K)<\I$. Then the following are equivalent:
\be
\item $(\mathcal{M},\gamma)$ is $\delta^\nu$-McDuff;
\item for each $\lambda\in K$, the fiber $(\mathcal{M}_\lambda, \gamma^\lambda)$ is $\delta^\nu$-McDuff.
\ee
\end{thm}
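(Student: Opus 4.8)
The plan is to use the product-type characterization of absorption from \autoref{thm:Absdeltanu}, specialized to the constant sequence $\nu_n=\nu$, so that $(\M,\gamma)$ is $\delta^\nu$-McDuff precisely when there is a unital equivariant homomorphism $(M_d,\Ad(\nu))\to(\M^\omega\cap\M',\gamma^\omega)$, and then to build such a map out of fiberwise data via the dimension-drop machinery of \autoref{prop:UnivPropDimDropUV}, \autoref{lma:NonPrimeDimDrop} and \autoref{prop:EqOzCommRges}. The implication (1)$\Rightarrow$(2) is the easy one: for $\lambda\in K$ the GNS map $\pi_\lambda$ is unital, equivariant and contractive from $\|\cdot\|_{2,u}$ to $\|\cdot\|_{2,\tau_\lambda}$, so it induces a unital equivariant $\ast$-homomorphism $\M^\omega\to\M_\lambda^\omega$ carrying $\M^\omega\cap\M'$ into $\M_\lambda^\omega\cap\M_\lambda'$; composing an equivariant unital copy of $(\R,\delta^\nu)$ in $(\M^\omega\cap\M',\gamma^\omega)$ (which exists by \autoref{thm:W*BdleAbsR}) with this map produces one in each fiber.

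For (2)$\Rightarrow$(1), set $N=\dim(K)<\I$. By \autoref{prop:EqOzCommRges} together with \autoref{prop:UnivPropDimDropUV} and \autoref{lma:NonPrimeDimDrop}, it suffices to construct $N+1$ equivariant completely positive contractive order zero maps $\Xi_0,\dots,\Xi_N\colon(M_d,\Ad(\nu))\to(\M^\omega\cap\M',\gamma^\omega)$ with pairwise commuting ranges satisfying $\sum_{l=0}^{N}\Xi_l(1)=1$. Since each fiber is $\delta^\nu$-McDuff, \autoref{thm:W*BdleAbsR} gives a unital equivariant copy of $(\R,\delta^\nu)$ in $(\M_\lambda^\omega\cap\M_\lambda',(\gamma^\lambda)^\omega)$, and its first $N+1$ tensor factors yield $N+1$ mutually commuting unital equivariant copies of $(M_d,\Ad(\nu))$ in that fiber. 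Lifting these through the Choi--Effros theorem produces, near each $\lambda$, maps $\psi_\lambda^{(0)},\dots,\psi_\lambda^{(N)}\colon M_d\to\M^\omega$ that are approximately multiplicative, approximately central, approximately equivariant and approximately mutually commuting, each estimate being valid on an open neighbourhood $U_\lambda$ of $\lambda$.

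The finite dimension of $K$ now enters, and not merely for finiteness: using $\dim(K)=N$, compactness, and the canonical surjection $K^\omega\to K$ induced by $C(K)\subseteq C(K^\omega)$, I refine the cover to one of multiplicity at most $N+1$ subordinate to the validity neighbourhoods, colour its members with $N+1$ colours so that members of one colour have disjoint supports, and fix a subordinate partition of unity $f_i$, regarded as central positive contractions in $C(K^\omega)$ with $\sum_i f_i=1$. Assigning to the $i$-th member, of colour $l$, its colour-$l$ lift, I set $\Xi_l=\sum_{c(i)=l}f_i\,\psi_{\lambda_i}^{(l)}$. Disjointness of supports within a colour makes each $\Xi_l$ an (approximate) equivariant order zero map with $\Xi_l(1)=\sum_{c(i)=l}f_i$, whence $\sum_{l}\Xi_l(1)=1$, and the factors $f_i$ suppress the contributions of the lifts outside their validity regions.

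The main obstacle is to force the ranges of $\Xi_l$ and $\Xi_{l'}$ to commute for $l\neq l'$: on an overlap of two cover members of different colours the relevant pieces come from lifts based at two different points, which a priori need not commute, and naively gluing commuting copies through \autoref{lma:FibersToBundleCopyR} would land in $\M\overline{\otimes}\R$ rather than in $\M$ itself (this is exactly the defect that \autoref{thm:MoRabsorbsMcDuff} tolerates and that we must now avoid). The fiberwise commuting copies must therefore be chosen \emph{coherently} over the cover, so that at each point the at most $N+1$ active colour-pieces land in mutually commuting copies of $(M_d,\Ad(\nu))$ in the corresponding fiber. I would carry this out by an induction over the skeleta of the (at most $N$-dimensional) nerve of the cover, at each stage invoking the fiberwise McDuff property—equivalently the speed-up and reindexation trick behind the claim in the proof of \autoref{thm:Absdeltanu}—to move each newly chosen copy into the relative commutant of the previously chosen ones along the overlaps; the multiplicity bound guarantees that only finitely many commutation constraints are active at any point, so the induction is finite. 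Once commuting ranges are secured, \autoref{prop:EqOzCommRges} assembles $\Xi_0,\dots,\Xi_N$ into a unital equivariant homomorphism $(M_d,\Ad(\nu))\to(\M^\omega\cap\M',\gamma^\omega)$, and \autoref{thm:Absdeltanu} concludes that $(\M,\gamma)$ is $\delta^\nu$-McDuff.
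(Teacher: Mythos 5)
Your proposal is correct in outline and follows essentially the same route as the paper: reduce via \autoref{thm:Absdeltanu} to embedding a single copy of $(M_d,\Ad(\nu))$ into $(\M^\omega\cap\M',\gamma^\omega)$, build $\dim(K)+1$ equivariant completely positive contractive order zero maps with commuting ranges and units summing to $1$ by lifting fiberwise copies over an $(\ep,S)$-commuting covering system and using that covering dimension equals decomposition dimension, and assemble them with \autoref{prop:EqOzCommRges}; you also correctly identify the cross-commutation of pieces based at different points as the crux, and your proposed fix (re-choosing each copy in the relative commutant of all previously chosen ones, using the fiberwise speed-up trick) is exactly the paper's mechanism. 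The one place where your order of operations is shakier than the paper's is that you fix the cover, the colouring and the partition of unity \emph{before} running the commutation induction: each re-choice of a copy is only guaranteed to commute with the earlier ones on a possibly smaller neighbourhood of its base point, so the supports of your already-fixed $f_i$ may leak outside the new validity regions. The paper avoids this by building the families sequentially together with successive refinements of the cover (each new family's maps are chosen to commute with \emph{all} previously constructed maps, and the cover is intersected with the new validity neighbourhoods at every stage), and only applying the decomposition-dimension colouring and partition of unity to the final refined cover; your argument should be reorganized the same way, after which it goes through.
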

\begin{proof} That (1) implies (2) follows from the existence of a canonical unital and equivariant
homomorphism $\M^\omega\cap \M'\to \M^\omega_\lambda\cap \M'_\lambda$ which is induced by the quotient
map $\M\to \M_\lambda$. Hence, if $\M^\omega\cap \M'$ admits a unital and equivariant homomorphism from $(\R,\delta^\nu)$, then
so does $\M^\omega_\lambda\cap \M'_\lambda$. (Note that this implication holds for any McDuff action, not necessarily
coming from unitary representations of $G$.)

We prove the converse. By \autoref{thm:W*BdleAbsR}, it is enough to construct, a unital equivariant homomorphism
$(M_{d},\Ad(\nu))\to (\M^\omega\cap \M',\gamma^\omega)$. We follow a strategy similar to that used in~\cite{KirRor_central_2014}; see also~\cite{Lia_rokhlin_2016}. The
proof will be divided into a pair of claims, resembling Proposition~7.4 and Lemma~7.5 in~\cite{KirRor_central_2014}.

For each $\lambda\in K$, we use \autoref{thm:W*BdleAbsR} to find a unital homomorphism
\[\theta_\lambda\colon (M_d,\Ad(\nu))\to (\M_\lambda^\omega\cap \M_\lambda',(\gamma^\lambda)^\omega).\]

\textbf{Claim 1:} Let $\ep>0$, let $m\in\N$, let $F\subseteq G$ be a finite subset, and let
$S\subseteq \M$ be a $\|\cdot\|_{2,u}$-compact subset consisting of contractions. Then there exist an open cover $\U$ of $K$,
and families $\Phi^{(l)}=\{\theta_1^{(l)},\ldots,\theta_{r_l}^{(l)}\}$, for $l=1,\ldots,m$, consisting of unital completely
positive contractive maps $\theta_j^{(l)}\colon M_d\to\M$, for $j=1,\ldots,r_l$, such that for all $U\in\U$ and all
$l=1,\ldots,m$, there exists $j\in \{1,\ldots,r_l\}$ satisfying
\be
\item[(1.a)] $\sup\limits_{\tau\in U}\left\| \theta_j^{(l)}(a)s-s\theta_j^{(l)}(a) \right\|_{2,\tau}<\ep$ for all $1=1,\ldots,m$, all $j=1,\ldots, r_l$, all $a\in M_d$
with $\|a\|\leq 1$, and all $s\in S$;
\item[(1.b)] $\sup\limits_{\tau\in U}\left\| \theta_j^{(l)}(a)\theta^{(k)}_i(b)-\theta^{(k)}_i(b)\theta_j^{(l)}(a) \right\|_{2,\tau}<\ep$ for all $l,k=1,\ldots,m$ with $l\neq k$,
all $j=1,\ldots, r_l$, for all $i=1,\ldots, r_k$, and for all $a,b\in M_d$ with $\|a\|, \|b\| \leq 1$;
\item[(1.c)] $\sup\limits_{\tau\in U}\left\| \theta_j^{(l)}(a^*a)-\theta^{(l)}_j(a)^*\theta_j^{(l)}(a) \right\|_{2,\tau}<\ep$ for all $l=1,\ldots,m$, for all $j=1,\ldots, r_l$,
and for all $a\in M_d$ with $\|a\|\leq 1$;
\item[(1.d)] $\sup\limits_{\tau\in U}\left\| \gamma_g(\theta_j^{(l)}(a))-\theta_j^{(l)}(\nu_ga\nu_g^*) \right\|_{2,\tau}<\ep$ for all $g\in F$, for all $l=1,\ldots,m$, for all $j=1,\ldots, r_l$,
and for all $a\in M_d$ with $\|a\| \leq 1$;
\ee
(The tuple $(\U; \Phi^{(1)},\ldots,\Phi^{(m)})$ is an equivariant analog of an \emph{$(\ep,S)$-commuting
covering system}; see Definition~7.1 in~\cite{KirRor_central_2014}.)

To prove the claim, fix $\lambda\in K$, and let $\widetilde{S}\subseteq \mathcal{M}$ be a $\|\cdot\|_{2,u}$-compact subset consisting
of contractions.
Use the Choi-Effros lifting theorem for the quotient map $\ell^\I(\N, \M_\lambda)\to \M_\lambda^\omega$
to find a sequence $(\theta_\lambda^{(n)})_{n\in\N}$ of
unital completely positive contractive maps $\theta_\lambda^{(n)}\colon M_d \to \M_\lambda$
that lifts $\theta_\lambda$.
Use the Choi-Effros lifting theorem again for the quotient map $\M\to \M_\lambda$ to lift each 
map $\theta_\lambda^{(n)}$ to a unital, completely positive map 
$\varphi_\lambda^{(n)}\colon M_d \to \M$. By
choosing a map far enough in the sequence, we may find a unital completely positive
contractive map $\varphi_\lambda\colon M_d\to \M$ satisfying
\bi
\item $\|\varphi_\lambda(a)s-s\varphi_\lambda(a)\|_{2,\lambda}<\ep$ for all $a\in M_d$ with $\|a\|\leq 1$ and for all $s\in
\widetilde{S}$;
\item $\|\varphi_\lambda(a^*a)-\varphi_\lambda(a)^*\varphi_\lambda(a)\|_{2,\lambda}<\ep$ for all $a\in M_d$ with $\|a\|\leq 1$;
\item $\|\gamma_g(\varphi_\lambda(a))-\varphi_\lambda(\nu_ga\nu_g^*)\|_{2,\lambda}<\ep$ for all $g\in F$, and for all $a\in M_d$ with $\|a\|\leq 1$.
\ei

By compactness of the unit ball of $M_d$ and of $\widetilde{S}$, and by continuity of the 2-norm, we can find an
open set $U_\lambda$ of $K$ containing $\lambda$ such the estimates above hold with respect to
the $\|\cdot\|_{2,\tau}$-norm for all $\tau\in U_\lambda$.

We finish the proof of the claim by induction on $m$. When $m=1$, we cover $K$ by the open sets $U_\lambda$ obtained
in the previous paragraph for $\widetilde{S}=S$, and find
an integer $r_1\in\N$ and points $\lambda_1,\ldots,\lambda_{r_1}\in K$, such that
$\U=\{U_{\lambda_1},\ldots,U_{\lambda_{r_1}}\}$ is a cover of $K$. Then $\U$ and
$\Phi^{(1)}=\{\varphi_{\lambda_1},\ldots,\varphi_{\lambda_{r_1}}\}$ satisfy the desired properties.

Assume that we have found an open cover $\mathcal{V}$ and families $\Phi^{(j)}$ for $j=1,\ldots,m-1$,
satisfying the conditions in the statement. Denote by $B$ the unit ball of $M_d$.
We let $\varphi_\lambda\colon M_d\to \M$ and
$W_\lambda$ be as in the first part of this proof, for
\[\widetilde{S}=S\cup \bigcup\limits_{l=1}^{m-1}\bigcup\limits_{j=1}^{r_l}\varphi^{(l)}_j(B).\]
Find an integer $r_l\in\N$ and points $\lambda^{(l)}_1,\ldots,\lambda^{(l)}_{r_l}\in K$, such that
$\U=\{U_{\lambda^{(l)}_1},\ldots,U_{\lambda^{(l)}_{r_l}}\}$ is a cover of $K$. Let $\U$ be the
family of sets of the form $V\cap W_{\lambda^{(l)}_j}$, for $j=1,\ldots, r_l$, and set
$\Phi^{(m+1)}=\{\varphi_{\lambda_1^{(l)}},\ldots,\varphi_{\lambda_{r_l}^{(l)}}\}$. It is straightforward
to check that these satisfy the desired properties.
\newline

For the next claim, we set $m=\dim(K)$.

\textbf{Claim 2:}
Let $\ep>0$, let $F\subseteq G$ be a finite subset, and let
$S\subseteq \M$ be a $\|\cdot\|_{2,u}$-compact subset consisting of contractions. Then there exist
completely positive contractive maps $\psi^{(0)},\ldots,\psi^{(m)}\colon M_d\to \M$ with $\sum\limits_{j=0}^m\psi^{(j)}(1)=1$, satisfying
\be
\item[(2.a)] $\|\psi^{(j)}(a)s-s\psi^{(j)}(a)\|_{2,u}<\ep$ for all $j=0,\ldots,m$, for all $a\in M_d$ with $\|a\|\leq 1$, and for all $s\in S$;
\item[(2.b)] $\|\psi^{(j)}(a)\psi^{(k)}(b)-\psi^{(k)}(b)\psi^{(j)}(a)\|_{2,u}<\ep$ for all $j,k=0,\ldots,m$ with $j\neq k$, and for all $a,b\in M_d$ with $\|a\|,\|b\|\leq 1$;
\item[(2.c)] $\|\psi^{(j)}(a^*a)-\psi^{(j)}(a)^*\psi^{(j)}(a)\|_{2,u}<\ep$ for all $j=0,\ldots,m$, and for all $a\in M_d$ with $\|a\|\leq 1$;
\item[(2.d)] $\|\gamma_g(\psi^{(j)}(a))-\psi^{(j)}(\nu_ga\nu_g^*) \|_{2,u}<\ep$ for all $g\in F$, for all $j=0,\ldots,m$, and for all $a\in M_d$ with $\|a\|\leq 1$.
\ee

Let $\U$ be an open cover and let $\Phi^{(0)},\ldots,\Phi^{(m)}$ be as in the previous claim.
Since covering dimension and decomposition dimension agree for compact metric spaces (see Lemma~3.2 in~\cite{KirWin_covering_2004}),
there exists a refinement $\U'$ of $\U$, such that $\U'$ is the union of $m+1$ finite subsets
$\U_0,\ldots, \U_m$, with $\U_j=\{U_1^{(j)},\ldots, U_{s_j}^{(j)}\}$, consisting of pairwise disjoint
open subsets of $K$.

Let $\{f_k^{(j)}\colon j=0,\ldots,m; k=1,\ldots, s_j\}$ be a partition of unity
of $K$ subordinate to $\U'$, with $\supp(f_k^{(j)})\subseteq U_k^{(j)}$ for
$j=0,\ldots,m$ and $k=1,\ldots,s_j$. Observe
that for fixed $j$, the functions $f_1^{(j)},\ldots,f_{s_j}^{(j)}$ are pairwise
orthogonal. We regard these
functions as elements in $C(K)\subseteq \M$, and observe that they are left fixed by
$\gamma$, since $\gamma$ is a fiber-wise action (and, in particular, trivial on $C(K)$).

Fix $j\in \{0,\ldots,m\}$. For $k=1,\ldots,s_j$,
let $\varphi_k^{(j)}\colon M_d\to \M$ be a unital
completely positive map belonging to $\Phi^{(j)}$.
Define a linear map $\psi^{(j)}\colon M_d\to \M$ by
\[\psi^{(j)}(a)=\sum_{k=1}^{s_j} f_k^{(j)}\varphi^{(j)}_k(a)\]
for all $a\in M_d$. Since the functions $f_k^{(j)}$ belong to the center of $\M$,
they in particular commute with the images of the maps $\varphi^{(j)}_k$, and thus
it follows that $\psi^{(j)}$ is completely positive and contractive.

It remains to show that $\psi^{(0)},\ldots,\psi^{(m)}$ satisfy the conditions of the
claim. Since the verification of conditions (1), (2) and (3) is similar to the verification
of conditions (i), (iii) and (iv) in the proof of Lemma~7.5 of~\cite{KirRor_central_2014}, we will only check
condition (4). Fix $g\in F$, an index $j\in \{0,\ldots,m\}$, and a contraction $a\in M_d$.
Using that $\gamma_g(f_k^{(j)})=f_k^{(j)}$ for all $k=1,\ldots, s_j$ at the first step,
and that these contractions are orthogonal at the second step, we get
\begin{align*}
\|\gamma_g(\psi^{(j)}(a))-\psi^{(j)}(\nu_ga\nu_g^*)\|_{2,u} &=\left\|\sum_{k=1}^{s_j} f_k^{(j)}\left(\gamma_g(\varphi^{(j)}_k(a))-\varphi^{(j)}_k(\nu_ga\nu_g^*)\right)\right\| \\
&=\max_{k=1,\ldots,s_j} \left\|\gamma_g(\varphi^{(j)}_k(a))-\varphi^{(j)}_k(\nu_ga\nu_g^*)\right\|<\ep,
\end{align*}
as desired. This finishes the proof of the claim.
\newline

To finish the proof of the theorem, we choose a countable set $\{x_n\}_{n\in\N}$ which is $\|\cdot\|_{2,u}$-dense in $\M$, and an
increasing sequence $(F_n)_{n\in\N}$ of finite subsets of $G$ whose union equals $G$. Using the previous claim, we find
completely positive contractive maps $\psi_n^{(0)},\ldots,\psi_n^{(m)}\colon M_d\to \M$ satisfying conditions (2.a) through (2.d) for $\ep=1/n$, $F=F_n$,
and $S_n=\{x_1,\ldots,x_n\}$. For $j=0,\ldots,m$, let $\psi_j\colon M_d\to \M^\omega$ be the map determined by $(\psi^{(j)}_n)_{n\in\N}$.
It is then a routine exercise to check that these are equivariant completely positive contractive maps of order zero, with commuting ranges
that are contained in $\M^\omega\cap \M'$,
satisfying $\sum\limits_{j=0}^m\psi_j(1)=1$.
By \autoref{prop:EqOzCommRges},
there is a unital equivariant homomorphism $(M_d,\Ad(\nu))\to (\M^\omega\cap \M',\gamma^\omega)$, and the result follows from \autoref{thm:W*BdleAbsR}.
\end{proof}

Next, we deduce that the equivalence in \autoref{thm:MoRabsorbsMcDuff} holds for McDuff actions that absorb $\id_{\R}$,
even if they are not of product type. In particular, this is the case for \emph{any} McDuff action of an amenable group.

\begin{cor}\label{cor:DimKGamenable}
Let $\mathcal{M}$ be a strictly separable $W^*$-bundle over a compact metrizable space $K$,
let $G$ be a countable discrete group, let $\delta\colon G\to\Aut(\R)$ be a McDuff action,
and let $\gamma\colon G\to\Aut(\mathcal{M})$ be a fiber-wise
action. Suppose that $\dim(K)<\I$. If $\delta$ absorbs $\id_{\R}$, that is, if $\delta\otimes\id_{\R}$ is cocycle conjugate to 
$\delta$, then the following are equivalent:
\be
\item $(\mathcal{M},\gamma)$ is $\delta$-McDuff;
\item for each $\lambda\in K$, the fiber $(\mathcal{M}_\lambda, \gamma^\lambda)$ is $\delta$-McDuff.
\ee
When $G$ is amenable, $\delta$ is automatically $\id_{\R}$-absorbing.
\end{cor}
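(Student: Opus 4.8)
The plan is to handle the stated equivalence and the final amenability assertion separately, the unifying observation being that the trivial action $\id_\R$ is itself a product-type action $\delta^\nu$ (take $\nu\colon G\to\U(M_2)$ to be the trivial representation, so that $\delta^\nu=\id_\R$), so that \autoref{thm:McDuffFromFiberToBundle} and \autoref{thm:Absdeltanu} both apply to it. The implication $(1)\Rightarrow(2)$ is the easy one and needs neither $\dim(K)<\I$ nor $\id_\R$-absorption: the quotient maps $\M\to\M_\lambda$ induce canonical unital equivariant homomorphisms $\M^\omega\cap\M'\to\M_\lambda^\omega\cap\M_\lambda'$, along which a unital equivariant copy of $(\R,\delta)$ pushes forward, exactly as in the first paragraph of the proof of \autoref{thm:McDuffFromFiberToBundle}.

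For $(2)\Rightarrow(1)$ I would argue in three steps and then combine. \emph{Step 1: each fiber is $\id_\R$-McDuff.} Being $\delta$-McDuff depends only on the cocycle conjugacy class of $\delta$ (a cocycle for $\delta$ tensors up to one for $\gamma\otimes\delta$), so since $\delta$ is cocycle conjugate to $\delta\otimes\id_\R$, each fiber $(\M_\lambda,\gamma^\lambda)$ is $(\delta\otimes\id_\R)$-McDuff. By \autoref{thm:W*BdleAbsR} this provides a unital equivariant homomorphism $(\R\overline{\otimes}\R,\delta\otimes\id_\R)\to(\M_\lambda^\omega\cap\M_\lambda',(\gamma^\lambda)^\omega)$, and precomposing it with the (equivariant, unital) second-factor embedding $(\R,\id_\R)\to(\R\overline{\otimes}\R,\delta\otimes\id_\R)$, $y\mapsto 1\otimes y$, shows via \autoref{thm:W*BdleAbsR} that $(\M_\lambda,\gamma^\lambda)$ is $\id_\R$-McDuff. \emph{Step 2: the bundle is $\id_\R$-McDuff.} Since $\id_\R=\delta^\nu$ is of product type and $\dim(K)<\I$, \autoref{thm:McDuffFromFiberToBundle} promotes the fiberwise $\id_\R$-McDuffness of Step 1 to $\id_\R$-McDuffness of $(\M,\gamma)$; that is, $(\M,\gamma)$ is cocycle conjugate to $(\M\overline{\otimes}\R,\gamma\otimes\id_\R)$.

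\emph{Step 3: $\M\overline{\otimes}\R$ is $\delta$-McDuff.} The hypothesis $(2)$ says precisely that $\gamma^\lambda$ is cocycle conjugate to $\gamma^\lambda\otimes\delta$ for every $\lambda$, which is exactly the assumption of \autoref{thm:MoRabsorbsMcDuff}; its conclusion is that $\gamma\otimes\id_\R$ is cocycle conjugate to $\gamma\otimes\id_\R\otimes\delta$, i.e. that $(\M\overline{\otimes}\R,\gamma\otimes\id_\R)$ is $\delta$-McDuff. Finally I combine Steps 2 and 3: $\delta$-McDuffness of a bundle-action is a cocycle conjugacy invariant, since cocycle conjugate bundle-actions have conjugate relative-commutant central sequence algebras (as recorded at the start of the proof of \autoref{thm:W*BdleAbsR}); hence from the cocycle conjugacy of $(\M,\gamma)$ with $(\M\overline{\otimes}\R,\gamma\otimes\id_\R)$ of Step 2 and the $\delta$-McDuffness of the latter from Step 3, we conclude that $(\M,\gamma)$ is $\delta$-McDuff, which is $(1)$. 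The one genuinely new ingredient is the recognition that $\id_\R$ is of product type, which is what lets \autoref{thm:McDuffFromFiberToBundle} strip off the auxiliary copy of $\R$ created by \autoref{thm:MoRabsorbsMcDuff}.

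It remains to prove the final sentence: a McDuff action $\delta$ of an \emph{amenable} group $G$ absorbs $\id_\R$. Taking $\M=\R$ and $\gamma=\delta$ in \autoref{thm:W*BdleAbsR}, the statement that $\delta$ is cocycle conjugate to $\delta\otimes\id_\R$ is equivalent to the existence of a unital equivariant homomorphism $(\R,\id_\R)\to(\R^\omega\cap\R',\delta^\omega)$, i.e. to a unital copy of $\R$ inside the fixed-point algebra $(\R^\omega\cap\R')^{\delta^\omega}$. The crux, which I expect to be the main obstacle, is to show that for amenable $G$ this fixed-point central sequence algebra is large; concretely, that it is a $\mathrm{II}_1$ factor, which then automatically contains a unital copy of $\R$. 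This is exactly where amenability enters (for non-amenable $G$ one expects the fixed-point algebra to collapse), and I would derive it from a Rokhlin/F\o lner argument for the trace-preserving action $\delta$: a F\o lner sequence yields approximately $\delta$-invariant, approximately central systems of matrix units in $\R$, which in the ultrapower can be straightened to an exactly $\delta^\omega$-invariant unital copy of $M_d$ for each $d$ — equivalently, this is condition $(2)$ of \autoref{thm:Absdeltanu} for trivial $\nu$. Iterating (or invoking factoriality directly) produces the desired trivially-acted copy of $\R$, and \autoref{thm:W*BdleAbsR} then completes the proof.
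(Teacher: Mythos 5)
Your proof of the equivalence of (1) and (2) is correct and follows the paper's argument essentially step for step: (1)$\Rightarrow$(2) via the quotient maps onto the fibers; for the converse, first upgrade fiberwise $\delta$-McDuffness to fiberwise $\id_{\R}$-McDuffness using $\delta\cong_{\mathrm{cc}}\delta\otimes\id_{\R}$ together with the (equivariant) second-factor embedding $(\R,\id_{\R})\to(\R\overline{\otimes}\R,\delta\otimes\id_{\R})$, then apply \autoref{thm:McDuffFromFiberToBundle} to the product-type action $\id_{\R}=\delta^\nu$ (trivial $\nu$) to conclude that $(\M,\gamma)$ is $\id_{\R}$-McDuff, then apply \autoref{thm:MoRabsorbsMcDuff} to see that $(\M\overline{\otimes}\R,\gamma\otimes\id_{\R})$ is $\delta$-McDuff, and finish with the same chain of cocycle conjugacies that the paper writes down.

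The gap is in the final claim. The paper does not prove that every action of a countable amenable group on $\R$ absorbs $\id_{\R}$; it cites Ocneanu \cite{Ocn_actions_1985}, for whom this is a genuine theorem. Your sketch --- ``a F\o lner sequence yields approximately $\delta$-invariant, approximately central systems of matrix units in $\R$'' --- asserts precisely the hard content of that theorem without supplying an argument. Averaging an approximately central copy of $M_d$ over a F\o lner set destroys the matrix-unit relations, so there is no direct passage from the F\o lner condition to systems of matrix units that are simultaneously approximately central and approximately $\delta$-invariant. What is actually required is Ocneanu's stability machinery: given a unital embedding $\varphi\colon M_d\to\R^\omega\cap\R'$, each $\delta^\omega_g\circ\varphi$ is unitarily conjugate to $\varphi$ inside the II$_1$ factor $\R^\omega\cap\R'$, and one must untwist the resulting approximate $1$-cocycle of unitaries, which is where amenability genuinely enters and which is a substantial argument. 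Unless you intend to reproduce that proof, the correct move is simply to cite Ocneanu, as the paper does; with that citation in place, the rest of your argument is complete.
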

\begin{proof}
It is clear that (1) implies (2), so assume that (2) holds, and assume that $\delta$ is $\id_{\R}$-absorbing.
Then every fiber of $(\M,\gamma)$ is $\id_{\R}$-McDuff,
and since $\id_{\R}$ is a product type action as in \autoref{eg:deltanu}, \autoref{thm:McDuffFromFiberToBundle} implies
that $(\M,\gamma)$ is itself $\id_{\R}$-McDuff.

On the other hand, by \autoref{thm:MoRabsorbsMcDuff}, the $W^*$-bundle $(\M\overline{\otimes}\R,\gamma\otimes\id_{\R})$
is $\delta$-McDuff. Combining all of the above,
we obtain the following chain of cocycle conjugacies (denoted $\cong_{\mathrm{cc}}$):
\[(\M,\gamma)\cong_{\mathrm{cc}} (\M\overline{\otimes}\R,\gamma\otimes\id_{\R}) \cong_{\mathrm{cc}}
 (\M\overline{\otimes}\R\overline{\otimes}\R,\gamma\otimes\id_{\R}\otimes\delta)\cong_{\mathrm{cc}}  (\M\overline{\otimes}\R,\gamma\otimes\delta),
\]
which finishes the proof.

Ocneanu has shown in \cite{Ocn_actions_1985} that any amenable
group action on $\R$ is $\id_{\R}$-McDuff, which justifies
the last claim.
\end{proof}

The previous corollary will be used in the proof of \autoref{thm:EqJiangSuAbs} to show that
under very general conditions, any action of an amenable group on a $\mathcal{Z}$-stable \ca\ absorbs
the trivial action on $\mathcal{Z}$ tensorially.

\section{The weak tracial Rokhlin property}
\subsection{Pavings of amenable groups}
Let $G$ be a countable discrete group. Given a finite subset $K\subseteq G$ and $\ep>0$, we say that a finite set
$S\subseteq G$ is \emph{$(K,\ep)$-invariant} if
\[\left|S\cap \bigcap_{g\in K} gS\right| \geq (1-\ep) |S|.\]
Recall that, by a result of F\o lner, $G$ is amenable if and only if for every finite subset $K\subseteq G$ and every $\ep>0$, there exists
a nonempty $(K,\ep)$-invariant subset of $G$.

\begin{df}
Let $G$ be a discrete group and let $\ep>0$. A family $(S_j)_{j\in J}$ of finite subsets of $G$ is
said to be \emph{$\ep$-disjoint} if there exist subsets $T_j\subseteq S_j$, for $j\in J$, such that
\[|T_j|\geq (1-\ep)|S_j| \ \ \mbox{ and } \ \ T_j\cap T_k=\emptyset \ \mbox{ whenever } j\neq k.\]

Let $F\subseteq G$ be a finite subset.
A family $S_1,\ldots, S_N$ of finite subsets of $G$ is said to \emph{$\ep$-pave} the set $F$ if there
are finite subsets $L_1,\ldots,L_N$ of $G$ such that:
\be\item[(a)] $\bigcup_{j=1}^NS_jL_j\subseteq F$;
\item[(b)] the sets $S_jL_j$, for $j=1,\ldots,N$, are pairwise disjoint;
\item[(c)] $|F\setminus\bigcup_{j=1}^NS_jL_j|<\ep |F|$;
\item[(d)] for each $j=1,\ldots,N$, the sets $(S_j\ell)_{\ell\in L_j}$ are $\ep$-disjoint.\ee
\end{df}

\begin{df}\label{df:pavingsystem}
Let $S_1,\ldots,S_N$ be finite subsets of a discrete group $G$. Given $\ep>0$, we say that
the sets $S_1,\ldots,S_N$ are an \emph{$\ep$-paving system} if there exist $\delta>0$ and
a finite subset $K\subseteq G$ such that $S_1,\ldots,S_N$ $\ep$-pave any $(K,\delta)$-invariant set.
\end{df}

The existence of paving systems is guaranteed by the following result of Ornstein and Weiss; see Theorem~6
in~\cite{OrnWei_entropy_1987}.

\begin{thm}\label{thm:OWPavSyst}
Let $G$ be an amenable group and let $\ep>0$. Then there exist $N\in\N$ such that for any
$\delta>0$ and every finite set $K\subseteq G$, there is an $\ep$-paving system $S_1,\ldots,S_N$
of $G$, with each $S_j$ being $(K,\delta)$-invariant, and such that the unit of $G$ belongs to $S_1$.
\end{thm}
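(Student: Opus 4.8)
The statement is the Ornstein--Weiss quasi-tiling theorem, so the plan is to reconstruct their argument, whose engine is a single-tile covering estimate that is then iterated. First I would prove a \emph{packing lemma}: if $S\subseteq G$ is finite and $D\subseteq G$ is sufficiently invariant relative to $S$, then a \emph{maximal} $\ep$-disjoint family of translates $\{S\ell\}_{\ell\in L}$ with $S\ell\subseteq D$ covers at least a definite fraction $\beta=\beta(\ep)>0$ of $D$. The proof is a double counting of incidences: by invariance of $D$ the number of admissible centers $\ell$ with $S\ell\subseteq D$ is at least $(1-\ep')|D|$; maximality forces every admissible center to satisfy $|S\ell\cap C|>\ep|S|$, where $C=\bigcup_{\ell\in L}S\ell$ is the covered set (used centers give $S\ell\subseteq C$, unused ones cannot be added without violating $\ep$-disjointness); and summing $\sum_\ell|S\ell\cap C|$ over centers on one hand and over points of $C$ (using $|\{\ell:x\in S\ell\}|\le|S|$) on the other yields $|C|\ge\beta|D|$ with $\beta\approx\ep$.

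Next I would iterate. Fix $\ep$ and choose $N$ with $(1-\beta)^N<\ep$, so that $N$ depends only on $\ep$. Given the target invariance $(K,\delta)$, build tiles $S_1,\dots,S_N$, each $(K,\delta)$-invariant and arranged in a \emph{hierarchy of invariance}, each $S_{j+1}$ vastly more invariant than the scale of $S_1,\dots,S_j$, with $e\in S_1$; amenability and F\o lner's theorem guarantee such sets exist and may be chosen to contain $e$. Starting from $B_0=\emptyset$, at step $j$ I would select a maximal $\ep$-disjoint family of translates of $S_j$ contained in $F$ and genuinely disjoint from $B_{j-1}$, record the centers as $L_j$, and set $B_j=B_{j-1}\cup S_jL_j$. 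Applying the packing lemma with $D=F\setminus B_{j-1}$ gives $|F\setminus B_j|\le(1-\beta)|F\setminus B_{j-1}|$ as long as the uncovered part is still of size at least $\ep|F|$, so after $N$ steps $|F\setminus B_N|<\ep|F|$, which is condition (c); conditions (a), (b), (d) are built into the selection rule (containment in $F$, genuine disjointness from earlier indices, and $\ep$-disjointness within each index). Reading off the resulting $(K',\delta')$ from the invariance of the $S_j$ then exhibits $S_1,\dots,S_N$ as a paving system in the sense of \autoref{df:pavingsystem}.

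The main obstacle is making the single-tile covering estimate survive the iteration: in step $j$ the region to be paved is the irregular set $F\setminus B_{j-1}$ rather than a genuine F\o lner set, so its boundary is not automatically negligible, and at the same time one must secure \emph{genuine} disjointness across tile indices (condition (b)) while only insisting on $\ep$-disjointness within a single index (condition (d)). This is precisely the delicate heart of Ornstein--Weiss: it is what forces the invariance hierarchy, so that each newly placed tile $S_j$ is invariant enough that the accumulated boundary of the previously paved region $B_{j-1}$ is negligible at its scale, and it requires carefully passing from $\ep$-disjoint translates to the genuinely disjoint subsets $T_j\subseteq S_j$ demanded by $\ep$-disjointness. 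Since the result is classical, in practice one simply invokes \cite{OrnWei_entropy_1987}, but the sketch above is the route I would take to reprove it.
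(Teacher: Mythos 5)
The paper offers no proof of this theorem to compare against: it is quoted from Theorem~6 of \cite{OrnWei_entropy_1987}, with the additional claim that the unit lies in $S_1$ read off from Ocneanu's construction in Section~3 of \cite{Ocn_actions_1985}. So your proposal is really being measured against the classical Ornstein--Weiss argument, and its two main ingredients are the right ones: the maximality/double-counting packing lemma (your estimate $|C|\geq\ep(1-\ep')|D|$ is correct) and the geometric decay $(1-\beta)^N<\ep$ with $N$ depending only on $\ep$. The point the paper actually adds, $e\in S_1$, is harmless in any reconstruction, since replacing $S_1$ by $S_1g^{-1}$ and $L_1$ by $gL_1$ changes nothing in the definition of an $\ep$-paving.

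There is, however, a genuine error in the iteration as you describe it: you order the tiles so that $S_{j+1}$ is vastly more invariant than $S_1,\dots,S_j$ and then place translates of $S_1$ first, $S_2$ second, and so on. This order makes step~2 fail. Maximality of the $\ep$-disjoint family of $S_1$-translates forces $|S_1\ell\cap B_1|>\ep|S_1|$ for every admissible centre $\ell$, so the covered set $B_1$ is $\ep$-dense in $F$ at the scale of $S_1$; a translate of the much larger $S_2$ lying in $F$ then contains many admissible $S_1$-centres and therefore meets $B_1$, so essentially no translate of $S_2$ fits inside $F\setminus B_1$ and the packing lemma yields nothing. The correct order is the reverse: place the most invariant tile $S_N$ first and the least invariant $S_1$ last. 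Then at stage $j$ the covered region $B$ is a union of $\ep$-disjoint translates of tiles each chosen to be $(S_j,\eta)$-invariant with $\eta$ tiny relative to $\ep/|S_j|$, whence $|S_j^{-1}B\setminus B|$ is negligible compared with $\ep|F|$; this is precisely what guarantees that a positive proportion of $F\setminus B$ consists of admissible centres for $S_j$ even though $F\setminus B$ is not a F\o lner set. Your own description of the mechanism (``each newly placed tile $S_j$ is invariant enough that the accumulated boundary of $B_{j-1}$ is negligible at its scale'') has the dependence backwards: what is needed is that the \emph{previously placed} tiles be invariant at the scale of the tile currently being placed. With the order reversed and that boundary estimate supplied, your sketch does reconstruct the proof of the quasi-tiling theorem in the sense of \autoref{df:pavingsystem}.
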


The above theorem, except for the very last condition, is proved in Section~3 of~\cite{Ocn_actions_1985}, and
the proof given there shows that one can always assume that $S_1$ contains the identity of $G$.
(See, specifically, the construction of the
sets $S_1,\ldots,S_N$ given at the bottom of page 17 in \cite{Ocn_actions_1985}.)

\subsection{The weak tracial Rokhlin property for actions of amenable groups}
If $A$ is a unital \ca\ and $\omega$ is a free ultrafilter over $\N$, then every trace $\tau$ on $A$ extends canonically to a
trace on $A_\omega$, which we denote by $\tau_\omega$. We write $J_{A}$ for the \emph{trace-kernel ideal} in $A_\omega$ (see Definition~4.3
in~\cite{KirRor_central_2014}), that is,
\[J_A=\left\{b=\left[(b_n)_{n\in\N}\right] \in A_\omega \colon \lim_{n\to\omega}\sup_{\tau\in T(A)}\tau(b_n^*b_n)=0\right\}.\]

Below is the definition of the \wtRp\ with which we will work in the present paper.
It is formally stronger than the one given by Wang in Definition~2.1 in~\cite{Wan_tracial_2013}, since we assume the positive contractions to
exist for \emph{any} paving family, and not just for some.
(We also use traces instead of Cuntz comparison, but this difference
is not as significant.) It will ultimately follow from \autoref{thm:wtRp} that, in the context of this
theorem, our definition and Wang's are in fact equivalent (and equivalent to strong outerness).

\begin{df}\label{df:wtRp}
Let $G$ be an amenable group, let $A$ be a simple, separable \uca, and let $\alpha\colon G\to\Aut(A)$ be an
action. We say that $\alpha$ has the \emph{weak tracial Rokhlin property} if
for every 
paving family $S_1,\ldots,S_N$ of subsets of $G$, there exist
positive contractions
$f_{\ell,g} \in A_\omega\cap A'$, for $\ell=1,\ldots,N$ and $g\in S_\ell$, satisfying:
\be
\item[(a)] $(\alpha_\omega)_{gh^{-1}}(f_{\ell, h})=f_{\ell,g}$ for all $g,h\in S_\ell$ and for all $\ell=1,\ldots,N$;
\item[(b)] $f_{\ell,g}f_{k,h}=0$ for all $\ell,k=1,\ldots,n$, for all $g\in S_\ell$ and for all $h\in S_k$ with $(\ell,g)\neq (k,h)$;
\item[(c)] $f_{\ell,g}(\alpha_\omega)_h(f_{k,r})=(\alpha_\omega)_h(f_{k,r})f_{\ell,g}$ for all $\ell,k=1,\ldots,n$, all $g\in S_\ell$ and all $r\in S_k$, and all $h\in G$;
\item[(d)] $1-\sum\limits_{\ell=1}^N\sum\limits_{g\in S_\ell} f_{\ell, g}$ belongs to $J_A$;
\item[(e)] For $\tau\in T(A)$, for $\ell=1,\ldots,N$ and for $g\in S_\ell$, the value of $\tau(f_{\ell,g})$ is
independent of $\tau$ and $g$, and is positive.
\ee

We say that $\alpha$ has the \emph{tracial Rokhlin property}, if the positive contractions $f_{\ell, g}$ above can be
chosen to be projections.
\end{df}

Condition (e) is a weakening of the uniformity condition considered by Matui-Sato in~\cite{MatSat_stability_2014}.
Conditions (a) through (d) are inspired by Ocneanu's characterization of outerness for amenable group actions on the hyperfinite
II$_1$-factor; see \cite{Ocn_actions_1985}. For later use, we isolate Ocneanu's condition in the following definition.

\begin{df}\label{df:W*Rp}
Let $G$ be an amenable group, let $\mathcal{M}$ be a $W^*$-bundle, and let $\gamma\colon G\to\Aut(\mathcal{M})$ be a fiber-wise
action. We say that $\gamma$ has the \emph{$W^*$-Rokhlin property} if
for every paving family $S_1,\ldots,S_N$ of subsets of $G$, there exist
projections
$p_{\ell,g} \in \mathcal{M}^\omega\cap \mathcal{M}'$, for $\ell=1,\ldots,N$ and $g\in S_\ell$, satisfying:
\begin{enumerate}[label=(\alph*)]
\item 
\label{df:W*Rp:(a)}
$\gamma^\omega_{gh^{-1}}(p_{\ell, h})=p_{\ell,g}$ for all $g,h\in S_\ell$ and for all $\ell=1,\ldots,N$;
\item 
\label{df:W*Rp:(c)}
$p_{\ell,g}\gamma^\omega_h(p_{k,r})=\gamma^\omega_h(p_{k,r})p_{\ell,g}$ for all $\ell,k=1,\ldots,N$, all $g\in S_\ell$ and all $r\in S_k$, and all $h\in G$;
\item 
\label{df:W*Rp:(d)}
$\sum\limits_{\ell=1}^N\sum\limits_{g\in S_\ell} p_{\ell, g}=1$.
\item 
\label{df:W*Rp:(e)}
$\tau_{\R^\omega}(p_{\ell,g}) = \tau_{\R^\omega}(p_{\ell,h}) > 0$ for all $g,h\in S_\ell$ and all $\ell=1,\ldots,N$.
\end{enumerate}
\end{df}
Note that condition (\ref{df:W*Rp:(d)}) in \autoref{df:W*Rp} implies that $p_{\ell,g}p_{k,h}=0$ for all $\ell,k=1,\ldots,N$, all $g\in S_\ell$ and all $h\in S_k$ with $(\ell,g)\neq (k,h)$.


\begin{eg}\label{eg:muG}
In \cite{Ocn_actions_1985}, Ocneanu showed that given an amenable group $G$, there is a unique (up to cocycle conjugacy) outer action of $G$
on $\mathcal{R}$. We fix such an action and denote it by $\mu_G\colon G\to \Aut(\R)$, or just by $\mu$ when $G$ is
understood. Then $\mu_G$ has the $W^*$-Rokhlin property in the sense of the previous definition. 
To see this, we rely on Ocneanu's construction presented in \cite[Chapter 4]{Ocn_actions_1985}, where $\mu_G$ is defined 
as a limit of conjugations by unitaries (see \cite[Section 4.4]{Ocn_actions_1985}). 
In particular, condition (\ref{df:W*Rp:(a)}) from \autoref{df:W*Rp} follows from Ocneanu's construction of ``approximate left translations'' $L_g$ 
on $G$, and specifically from the identity $L_{gh^{-1}}(h)=g$ for $g,h\in S_\ell$ and $\ell=1,\ldots,N$; see the first paragraph of 
page 21 of \cite{Ocn_actions_1985}. The same argument shows that condition (\ref{df:W*Rp:(e)}) is also satisfied, since the projections $p_{\ell,g}$
are the diagonal projections in some matrix algebra associated to the elements of the given paving. Conditions (\ref{df:W*Rp:(c)}) and (\ref{df:W*Rp:(d)})
are explicitly verified in Theorem~6.1 in~\cite{Ocn_actions_1985}. 
\end{eg}

\begin{rem}\label{rem:muG}
Adopting the notation from the previous example, it follows from the work of Ocneanu that $\mu_G$ is McDuff. Moreover, one 
immediately checks that any action that absorbs $\mu_G$ has the $W^*$-Rokhlin property, a fact that we will use repeatedly. If $H$ is a subgroup of $G$, then the restriction of $\mu_G$ to $H$ is clearly outer, and hence cocycle
conjugate to $\mu_H$. 
\end{rem}

The goal of this section is to prove the equivalence of conditions (1) and (2) in Theorem~\ref{thmintro:Equiv}. In fact, we prove
a slightly more general version in which nuclearity of $A$ is replaced by the condition that all of its weak closures with respect to
traces be hyperfinite. 

\begin{thm}\label{thm:wtRp}
Let $G$ be a countable amenable group, let $A$ be a separable, simple, \uca, and let $\alpha\colon G\to\Aut(A)$ be an action.
Suppose that $T(A)$ is a nonempty Bauer simplex,
and that the induced action of $G$ on $\partial_eT(A)$ has finite orbits and Hausdorff orbit space.
Finally, assume that $\overline{A}^\tau$ is hyperfinite for all $\tau\in \partial_eT(A)$.
Then the following are equivalent:
\be
\item $\alpha$ is strongly outer;
\item $\alpha\otimes\id_{\mathcal{Z}}$ has the \wtRp.\ee
\end{thm}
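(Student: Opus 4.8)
The plan is to work throughout with the uniform tracial completion, viewed as an equivariant $W^*$-bundle. Write $B=A\otimes\mathcal{Z}$ and $\beta=\alpha\otimes\id_{\mathcal{Z}}$. Since $\mathcal{Z}$ has a unique trace, $T(B)=T(A)$, $\partial_eT(B)=\partial_eT(A)=:K$, and $\overline{B}^\tau\cong\overline{A}^\tau\overline{\otimes}\R$; hence all the tracial hypotheses pass to $(B,\beta)$, and $\beta$ is strongly outer if and only if $\alpha$ is. Let $(\M,\gamma)$ be the equivariant $W^*$-bundle over $K/G$ attached to $(A,\alpha)$ as in \autoref{eg:EqW*bundle}, so that the fiber of $\M$ over $G\cdot\tau$ is $N:=\bigoplus_{\sigma\in G\cdot\tau}\overline{A}^\sigma$ and $\gamma^{G\cdot\tau}$ is the action of $G$ on $N$ induced from $\alpha|_{G_\tau}$ on $\overline{A}^\tau$, where $G_\tau$ is the stabilizer of $\tau$ (of finite index, since orbits are finite). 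The uniform tracial completion of $B$ is then $(\M\overline{\otimes}\R,\gamma\otimes\id_\R)$, a $W^*$-bundle over the same space.

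For (2)$\Rightarrow$(1) it is enough to prove $\beta$ strongly outer. Suppose not: there are $g\neq e$ and $\tau\in\partial_eT(B)$ with $\beta_g(\tau)=\tau$ such that $\beta_g$ is weakly inner on $\overline{B}^\tau$, say $\pi_\tau\circ\beta_g=\Ad(u)\circ\pi_\tau$ for a unitary $u\in\overline{B}^\tau$. I would invoke \autoref{thm:OWPavSyst} to fix a paving family whose sets are $(K_0,\delta)$-invariant for a finite $K_0\ni g$, large enough that some $S_\ell$ contains elements $h$ and $gh$, and apply the \wtRp\ to obtain positive contractions $f_{\ell,\bullet}$. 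Relation (a) of \autoref{df:wtRp} gives $(\beta_\omega)_g(f_{\ell,h})=f_{\ell,gh}$. Passing to the tracial ultrapower $(\overline{B}^\tau)^\omega$, the induced automorphism $(\beta_g)^\omega$ is trivial on $(\overline{B}^\tau)^\omega\cap(\overline{B}^\tau)'$ because it is implemented by $u\in\overline{B}^\tau$; hence the images of $f_{\ell,h}$ and $f_{\ell,gh}$ coincide there, forcing $\tau_\omega(f_{\ell,gh}f_{\ell,h})=\tau_\omega(f_{\ell,h}^2)$. But orthogonality (b) gives $\tau_\omega(f_{\ell,gh}f_{\ell,h})=0$, so $f_{\ell,h}=0$ in $\|\cdot\|_{2,\tau_\omega}$ and thus $\tau_\omega(f_{\ell,h})=0$, contradicting the positivity in (e). This is Ocneanu's Rokhlin-versus-inner argument.

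For (1)$\Rightarrow$(2) the heart is to deduce the $W^*$-Rokhlin property of $(\M\overline{\otimes}\R,\gamma\otimes\id_\R)$ from strong outerness, via the fiber-to-bundle results of Section~2. I first treat a single fiber. Fix $\tau\in K$; since $A$ is simple and $\tau$ extreme, $\overline{A}^\tau$ is a II$_1$ factor, and by hypothesis $\overline{A}^\tau\cong\R$. Strong outerness of $\alpha$ means precisely that $\alpha|_{G_\tau}$ is an outer action of the amenable group $G_\tau$ on $\R$, hence cocycle conjugate to $\mu_{G_\tau}$ by Ocneanu's uniqueness theorem, and in particular absorbs it; thus $((\overline{A}^\tau)^\omega\cap(\overline{A}^\tau)',(\alpha|_{G_\tau})^\omega)$ contains a unital equivariant copy of $(\R,\mu_{G_\tau})$, and so of $(\R,\mu_G|_{G_\tau})$ by \autoref{rem:muG}. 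Inducing this embedding along $G/G_\tau$ and using $N^\omega\cap N'=\bigoplus_{\sigma}((\overline{A}^\sigma)^\omega\cap(\overline{A}^\sigma)')$ produces a unital $G$-equivariant homomorphism $(\R,\mu_G)\to(N^\omega\cap N',\gamma^{G\cdot\tau,\omega})$, so the fiber $(N,\gamma^{G\cdot\tau})$ is $\mu_G$-McDuff by \autoref{thm:W*BdleAbsR}. Since $\mu_G$ is McDuff (\autoref{rem:muG}) and every fiber of $(\M,\gamma)$ is $\mu_G$-McDuff, \autoref{thm:MoRabsorbsMcDuff} with $\delta=\mu_G$ shows that $(\M\overline{\otimes}\R,\gamma\otimes\id_\R)$ is $\mu_G$-McDuff; no dimension hypothesis on $K/G$ is needed here, which is exactly why we pass to $\beta=\alpha\otimes\id_{\mathcal{Z}}$. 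As any $\mu_G$-absorbing action has the $W^*$-Rokhlin property (\autoref{rem:muG}), the bundle $(\M\overline{\otimes}\R,\gamma\otimes\id_\R)$ has the $W^*$-Rokhlin property.

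It remains to transfer the $W^*$-Rokhlin projections $p_{\ell,g}\in(\M\overline{\otimes}\R)^\omega\cap(\M\overline{\otimes}\R)'$ of \autoref{df:W*Rp} to positive contractions $f_{\ell,g}\in B_\omega\cap B'$ satisfying \autoref{df:wtRp}. Since $\M\overline{\otimes}\R$ is the uniform tracial completion of the $\mathcal{Z}$-stable algebra $B$, I would use $\mathcal{Z}$-stability to see that the canonical equivariant map $B_\omega\cap B'\to(\M\overline{\otimes}\R)^\omega\cap(\M\overline{\otimes}\R)'$, with kernel $J_B\cap(B_\omega\cap B')$, is surjective. Lifting one projection per tower by Choi--Effros and defining the remaining $f_{\ell,g}$ by the action makes condition (a) exact, and correcting the lifts by an (SI)/Cuntz-comparison argument arranges orthogonality (b), commutation (c), the remainder condition $1-\sum_{\ell,g}f_{\ell,g}\in J_B$ in (d), and the uniform positive traces in (e) from the corresponding properties of the $p_{\ell,g}$. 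I expect this transfer to be the main obstacle: promoting the clean projection picture in the $W^*$-bundle to honest positive contractions in $B_\omega\cap B'$ obeying (d) and (e) is the only step that is not formal, and it is precisely where $\mathcal{Z}$-stability and tracial comparison are essential.
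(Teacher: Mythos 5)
Your overall architecture for the hard direction (1)$\Rightarrow$(2) is the same as the paper's: pass to $\alpha\otimes\id_{\mathcal Z}$ so that the associated $W^*$-bundle already contains a trivial copy of $\R$, show each fiber is $\mu_G$-McDuff by combining strong outerness with Ocneanu's uniqueness theorem on $\R$ and induction over the finite-index stabilizer (this is exactly \autoref{prop:FiberW*Rp}, via \autoref{prop:IndAlgCocConj}), then apply \autoref{thm:MoRabsorbsMcDuff} to get the $W^*$-Rokhlin property for the bundle without any dimension hypothesis on the base. Your (2)$\Rightarrow$(1) is a legitimate variant: you run the Ocneanu ``Rokhlin versus inner'' contradiction directly in $(\overline{B}^\tau)^\omega$, whereas the paper pushes the Rokhlin contractions forward to the bundle and its fibers; both work, and yours is arguably more self-contained. (Like the paper, you only test extreme invariant traces, which is the accepted level of detail here.)

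The genuine gap is the final transfer from the $W^*$-Rokhlin projections to positive contractions in $B_\omega\cap B'$ satisfying \autoref{df:wtRp} exactly. You correctly identify this as the non-formal step but do not supply the tool, and the substitutes you gesture at are problematic: property (SI) is not a hypothesis of this theorem, and in the paper the \emph{equivariant} version of (SI) is deduced \emph{from} the weak tracial Rokhlin property (\autoref{prop:EqSI}), so invoking it here would be circular; Cuntz comparison is likewise not what is needed. The paper's mechanism is that the trace-kernel ideal $J_B$ is an \emph{equivariant $\sigma$-ideal} in $B_\omega$ (\autoref{prop:JAwSigmaId}, proved by a F\o lner averaging of the non-equivariant $\sigma$-ideal element). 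This produces a single $G$-invariant positive contraction $x\in (J_B\cap C')^{\beta_\omega}$ acting as a unit on $J_B\cap C$, and the uniform correction $f_{\ell,g}=(1-x)e_{\ell,g}(1-x)$ simultaneously makes conditions (a), (b), (c) exact, keeps (d), and preserves all traces since $x\in J_B$, giving (e). Note also that your plan of first forcing (a) by transporting one lift per tower and then repairing (b)--(e) afterwards would destroy (a) again unless the correcting element is $G$-invariant and central relative to the whole $G$-orbit of the lifts --- which is precisely what the equivariant $\sigma$-ideal property delivers and what is missing from your sketch.
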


The first precursor of this result is Theorem~5.5 in~\cite{EchLucPhiWal_structure_2010}, where the above result is shown under the additional
assumptions that $A$ has tracial rank zero, $A$ has a unique tracial state, and $G$ is finite. More recently, Matui and Sato gave
a proof of \autoref{thm:wtRp} in the case that $A$ is nuclear and has finitely many extremal tracial states and the group
$G$ is elementary amenable; see Theorem~3.6 in~\cite{MatSat_stability_2014}. Here, we remove all the assumptions on $G$, and significantly
relax the conditions on $T(A)$. Our main innovation is the systematic use of $W^*$-bundles in the equivariant setting.

We briefly describe our strategy for (1) $\Rightarrow$ (2), which is the difficult part. 
Adopt the notation from \autoref{eg:EqW*bundle}.
Strong outerness of $\alpha$ implies that the induced action of $G$ on each fiber of $(\mathcal{M}, K/G)$ has the $W^*$-Rokhlin
property; see \autoref{prop:FiberW*Rp}. The first step is to show that the bundle action $\gamma\colon G\to\Aut(\mathcal{M})$ has the $W^*$-Rokhlin property,
and this is obtained as a consequence of \autoref{thm:McDuffFromFiberToBundle}, using absorption
of the canonical action $\mu_G$ of $G$ on $\R$ with the $W^*$-Rokhlin property. The projections coming from the $W^*$-Rokhlin property can be approximated by positive contractions
in $A$, and these elements will satisfy the conditions in \autoref{df:wtRp} with respect to the uniform 2-norm, instead of the given norm
on $A$. We use the fact that the trace ideal in $A_\omega$ is equivariantly a $\sigma$-ideal (see \autoref{prop:JAwSigmaId}) to obtain
corrected elements which verify \autoref{df:wtRp}.

We need the following analog of Kirchberg's notion of a $\sigma$-ideal in the equivariant setting. For $\Z$-actions, this notion
was already considered in~\cite{Lia_rokhlin_2016}.

\begin{df}\label{df:EqSigmaIdeal}
Let $G$ be a discrete group, let $B$ be a \ca, let $\beta\colon G\to\Aut(B)$ be an action, and let $J\subseteq B$ be an ideal satisfying $\beta_g(J)=J$
for all $g\in G$. We say that $J$ is an \emph{equivariant $\sigma$-ideal (with respect to $\beta$)}, if for every separable $\beta$-invariant
subalgebra $C\subseteq B$, there exists a positive contraction $x\in (J\cap C')^\beta$ satisfying $xc=c$ for all $c\in C\cap J$.
\end{df}

It is easy to see that if a finite group acts on a \ca, then any $\sigma$-ideal is automatically an equivariant $\sigma$-ideal: one just
averages the positive contraction in the definition of a $\sigma$-ideal to obtain a fixed one. When the group is amenable, an exact
averaging is not possible, but this is good enough to get equivariant $\sigma$-ideals in sequence algebras, as we show below.

\begin{prop}\label{prop:JAwSigmaId}
Let $A$ be a unital \ca, let $G$ be a countable discrete amenable group, let $\alpha\colon G\to\Aut(A)$ be any action, and let
$\omega$ be a free ultrafilter over $\N$.
Then the trace ideal $J_{A}$ is an equivariant $\sigma$-ideal in $A_\omega$.
\end{prop}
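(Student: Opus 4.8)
The plan is to combine the non-equivariant $\sigma$-ideal property of $J_A$, which is due to Kirchberg (see~\cite{KirRor_central_2014}), with an averaging argument over F\o lner sets to produce \emph{approximately} invariant correcting elements, and then to upgrade these to an \emph{exactly} fixed one by a reindexation argument. So fix a separable $\alpha_\omega$-invariant subalgebra $C\subseteq A_\omega$. Since $J_A$ is a $\sigma$-ideal in $A_\omega$, there is a positive contraction $x_0\in J_A\cap C'$ with $x_0c=c$ for all $c\in C\cap J_A$; fix a representative $(x_{0,n})_{n\in\N}$ consisting of positive contractions in $A$. For a finite set $F\subseteq G$ I would set $y_F=\frac{1}{|F|}\sum_{g\in F}(\alpha_\omega)_g(x_0)$, represented by the positive contractions $y_{F,n}=\frac{1}{|F|}\sum_{g\in F}\alpha_g(x_{0,n})$.

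First I would check that $y_F$ inherits from $x_0$ every required property except exact invariance. Since $J_A$ is $\alpha_\omega$-invariant, each $(\alpha_\omega)_g(x_0)\in J_A$, and hence $y_F\in J_A$. Since $C$ is $\alpha_\omega$-invariant, $x_0\in C'$ forces $(\alpha_\omega)_g(x_0)\in C'$ for every $g$ (write an arbitrary element of $C$ as $(\alpha_\omega)_g(c)$ with $c\in C$), so $y_F\in C'$. For the unit property, given $c\in C\cap J_A$ the element $(\alpha_\omega)_{g^{-1}}(c)$ again lies in $C\cap J_A$, so $x_0(\alpha_\omega)_{g^{-1}}(c)=(\alpha_\omega)_{g^{-1}}(c)$; applying $(\alpha_\omega)_g$ gives $(\alpha_\omega)_g(x_0)c=c$, and averaging yields $y_Fc=c$. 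Finally, amenability controls the defect of invariance: for $h\in G$ one has $\|(\alpha_\omega)_h(y_F)-y_F\|\leq |hF\,\triangle\,F|/|F|$, which is small as soon as $F$ is chosen $(K,\dt)$-invariant for a finite set $K$ containing $h$ and $h^{-1}$ and $\dt$ small enough.

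The final step is to pass from these approximately invariant elements to a genuinely fixed one, and I would do this with Kirchberg's $\ep$-test (see~\cite{KirRor_central_2014}), taking $X_n$ to be the set of positive contractions of $A$ and using test functions that measure, for a candidate $(z_n)_{n\in\N}$, the defects $\sup_{\tau\in T(A)}\tau(z_n^*z_n)$ (membership in $J_A$), $\|z_nc_{j,n}-c_{j,n}z_n\|$ for a dense sequence $(c_j)_j$ of $C$ (commutation), $\|z_nd_{j,n}-d_{j,n}\|$ for a dense sequence $(d_j)_j$ of the separable space $C\cap J_A$ (the unit property), and $\|\alpha_h(z_n)-z_n\|$ for an enumeration of $G$ (invariance). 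The averaging argument of the previous paragraph shows precisely that for every finite subcollection of these conditions and every $\ep>0$ there is a representative $(y_{F,n})_{n\in\N}$ driving all of them below $\ep$, so the $\ep$-test produces a single positive contraction $x=[(x_n)_{n\in\N}]$ making every test function vanish; that is, $x\in(J_A\cap C')^{\alpha_\omega}$ and $xc=c$ for all $c\in C\cap J_A$, as required. I expect the main obstacle to be exactly this last step: because $G$ is infinite, exact invariance is unattainable at any finite stage, so amenability can only supply approximate invariance, and it is the reindexation encoded in the $\ep$-test that converts the F\o lner estimate into an exactly fixed corrector while simultaneously preserving the exact commutation, unit, and trace conditions.
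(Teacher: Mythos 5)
Your proposal is correct and follows essentially the same route as the paper: start from the non-equivariant $\sigma$-ideal property to get a corrector $x_0$, average it over a F\o lner set to obtain an approximately invariant element that still satisfies the exact commutation, ideal-membership and unit properties, and then invoke Kirchberg's $\ep$-test to upgrade approximate invariance to exact invariance. The only cosmetic difference is that you phrase all conditions as test functions in the $\ep$-test, whereas the paper keeps the exact conditions $y\in J_A\cap C'$ and $yc=c$ at each finite stage and only relaxes invariance; both versions work.
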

\begin{proof}
We abbreviate $J_A$ to $J$.
Let $C\subseteq A_\omega$ be a separable, $\alpha_\omega$-invariant subalgebra. Since $J$ is a $\sigma$-ideal in $A_\omega$, there exists
a positive contraction $x\in J\cap C'$ satisfying $xc=c$ for all $c\in C\cap J$. By Kirchberg's $\varepsilon$-test,
it is enough to prove that for every finite subset $K\subseteq G$ and every $\varepsilon>0$, there exists a
positive contraction $y\in J\cap C'$ such that $\|(\alpha_\omega)_k(y)-y\|<\ep$ for all $k\in K$ and $yc=c$ for all $c\in C\cap J$.

We fix a finite subset $K\subseteq G$ and $\ep>0$.
Using amenability of $G$, find a
finite subset $F$ of $G$ such that $|kF\triangle F|\leq \frac{\ep}{2} |F|$ for all $k\in K$.
Set $y=\frac{1}{|F|}\sum_{g\in F}(\alpha_\omega)_g(x)$. Then 
$yc = c$ for all $c\in C\cap J$.
For $k\in K$, we have
\begin{align*}
\|(\alpha_\omega)_k(y)-y\| &=\left\| \frac{1}{|F|} \sum_{g \in F} (\alpha_\omega)_{kg}(x)-(\alpha_\omega)_g(x)\right\|\\
&=\left\| \frac{1}{|F|} \sum_{kF\triangle F} (\alpha_\omega)_{kg}(x)-(\alpha_\omega)_g(x)\right\|\\
&= \frac{1}{|F|} \sum_{kF\triangle F} \left\|(\alpha_\omega)_{kg}(x)-(\alpha_\omega)_g(x)\right\|\leq \frac{\ep}{2}2=\ep.
\end{align*}
Since $J$ is an $\alpha_\omega$-invariant ideal, the positive contraction $y$ also belongs to $J$. Finally,
it is also easy to check that $y$ commutes with $C$, since $C$ is also invariant under $\alpha_\omega$. This
concludes the proof.
\end{proof}

Recall (see Definition~XVII.1.1 in~\cite{Tak_theory_2003} and the remark following Theorem 1.2 there) that an automorphism $\varphi$ of a von Neumann algebra
$M$ is said to be \emph{properly outer} if for
every central projection $p\in M$ satisfying $\varphi(p)=p$, the restriction of $\varphi$ to the corner $pM$ is outer.
An action $\gamma\colon G\to \Aut(M)$ is said to be \emph{properly outer} if $\gamma_g$ is properly outer for
all $g\in G\setminus\{1\}$.

\begin{prop}\label{prop:StrOutStrPropOut}
Let $G$ be a discrete group, let $A$ be a separable \ca, and let $\alpha\colon G\to\Aut(A)$ be a strongly outer action.
Then $\overline{\alpha}^\tau_g$ is properly outer for all $\tau\in T(A)^{\alpha_g}$ and for all $g\in G\setminus\{1\}$.
\end{prop}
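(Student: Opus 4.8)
The plan is to reduce the statement to a question about factors by means of the central decomposition of the tracial von Neumann algebra $M=\overline{A}^\tau$, and then to apply strong outerness fiberwise. Since $A$ is separable, $M$ is a finite von Neumann algebra with separable predual, so we may write its central decomposition as a direct integral $M=\int_Z^\oplus M_z\,d\mu(z)$ over a standard probability space $(Z,\mu)$, where each fiber $M_z$ is a factor. The trace of each $M_z$ corresponds to an extremal trace $\sigma_z\in\partial_eT(A)$, and $M_z$ is canonically identified with the GNS von Neumann algebra $\overline{A}^{\sigma_z}$. Under this decomposition $Z(M)\cong L^\infty(Z,\mu)$, and $\theta:=\overline{\alpha}^\tau_g$ acts on $Z(M)$ through a measure-preserving transformation $T$ of $Z$ which, via $z\mapsto\sigma_z$, is intertwined with the transformation $\sigma\mapsto\sigma\circ\alpha_g^{-1}$ of $\partial_eT(A)$; measure-preservation uses that $\tau$ is $\alpha_g$-invariant.

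Next I would argue by contradiction. Suppose $\theta$ is not properly outer, so there is a nonzero $\theta$-invariant central projection $p$, corresponding to a $T$-invariant Borel set $E\subseteq Z$ with $\mu(E)>0$, and a unitary $u\in pM$ with $\theta|_{pM}=\Ad(u)$. The crucial point is that $\Ad(u)$ acts trivially on $Z(pM)=L^\infty(E,\mu)$, whereas $\theta$ acts there through $T$; comparing the two forces $T(z)=z$ for $\mu$-almost every $z\in E$. Hence, for almost every $z\in E$, the trace $\sigma_z$ is fixed by $\alpha_g$, i.e.\ $\sigma_z\in T(A)^{\alpha_g}\cap\partial_eT(A)$.

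Because $\theta|_{pM}$ now lies over the identity of the base, it is a decomposable automorphism and disintegrates as $\theta|_{pM}=\int_E^\oplus\theta_z\,d\mu(z)$ with $\theta_z\in\Aut(M_z)$; a routine check identifies $\theta_z$ with the weak extension $\overline{\alpha}^{\sigma_z}_g$ of $\alpha_g$ at $\sigma_z$. Disintegrating $u=(u_z)_{z\in E}$ then gives $\overline{\alpha}^{\sigma_z}_g=\Ad(u_z)$ for almost every $z\in E$, so $\overline{\alpha}^{\sigma_z}_g$ is inner. But each such $\sigma_z$ is an extremal $\alpha_g$-invariant trace, so strong outerness of $\alpha$ forces $\overline{\alpha}^{\sigma_z}_g$ to be outer; as $M_z=\overline{A}^{\sigma_z}$ is a factor, it therefore cannot be inner. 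Since $\mu(E)>0$ this is a contradiction, and we conclude that $\theta=\overline{\alpha}^\tau_g$ is properly outer.

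The main obstacle is the direct-integral bookkeeping rather than any conceptual difficulty: one must verify that the fibers of the central decomposition are exactly the factors $\overline{A}^{\sigma_z}$, that the fiber of the weak extension $\overline{\alpha}^\tau_g$ over $z$ is the weak extension $\overline{\alpha}^{\sigma_z}_g$, and that an inner automorphism of the corner $pM$ disintegrates into inner automorphisms of the fibers (equivalently, that the implementing unitary $u$ yields a measurable field of implementing unitaries $u_z$). These are standard facts for von Neumann algebras with separable predual, but they only become available after the reduction to the fixed-point set $E$ carried out above, which is what lets us treat $\theta|_{pM}$ as an automorphism lying over the identity.
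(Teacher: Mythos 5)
Your argument is correct, but it takes a genuinely different and heavier route than the paper's. The paper avoids direct integrals entirely: given a $\theta$-invariant central projection $p$ on which $\theta=\overline{\alpha}^{\tau}_g$ is inner, it forms the compressed trace $\sigma(a)=\overline{\tau}(pa)/\overline{\tau}(p)$, observes that $\sigma$ is again $\alpha_g$-invariant, identifies the corner $p\overline{A}^{\tau}$ with the GNS weak closure $\overline{A}^{\sigma}$ by a short computation with the $2$-norms using centrality of $p$, and concludes that $\overline{\alpha}^{\sigma}_g$ is inner, contradicting strong outerness applied to the (possibly non-extremal) invariant trace $\sigma$. You instead disintegrate $M=\overline{A}^{\tau}$ over its center, use that an inner automorphism of $pM$ fixes $Z(pM)$ pointwise to force the base transformation to be the identity a.e.\ on $E$, and then disintegrate the implementing unitary to contradict outerness fiberwise. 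What your approach buys is that it only invokes strong outerness at \emph{extremal} $\alpha_g$-invariant traces, so it proves the formally stronger statement that outerness of the weak extensions at extremal invariant traces already forces proper outerness at every invariant trace. The cost is the direct-integral bookkeeping you acknowledge: that the central measure is supported on factorial (equivalently extremal) traces, that the fiber of $\overline{\alpha}^{\tau}_g$ over $z$ is $\overline{\alpha}^{\sigma_z}_g$, and that the implementing unitary admits a measurable field of fiberwise implementing unitaries $u_z$ --- all of which genuinely use separability of $A$, whereas the paper's compression argument is essentially separability-free and only two paragraphs long.
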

\begin{proof}
Let $g\in G\setminus\{1\}$, let $\tau\in T(A)^{\alpha_g}$, and let $p\in \overline{A}^\tau$ be a central invariant projection.
We denote by $\overline{\tau}$ the extension of $\tau$ to $\overline{A}^\tau$.
Define a trace $\sigma\in T(A)$ by $\sigma(a)=\overline{\tau}(pa)/\overline{\tau}(p)$ for all $a\in A$. Then
$\sigma$ is $\alpha_g$-invariant.

We claim that $\overline{A}^{\sigma}$ can be naturally identified with $p\overline{A}^\tau$. This is probably
known to the experts, but we include a proof for the sake of completeness. Observe first that the inequality 
$\|\cdot\|_{2,\sigma}\leq \overline{\tau}(p)^{-1/2}\|\cdot\|_{2,\tau}$ follows directly
from the Cauchy-Schwarz inequality.
In particular, if $(a_n)_{n\in\N}$ is a sequence in $A$ which is Cauchy with respect to $\|\cdot\|_{2,\tau}$, then 
$(pa_n)_{n\in\N}$ is Cauchy with respect to $\|\cdot\|_{2,\sigma}$. 
This shows that $p\overline{A}^{\tau}\subseteq \overline{A}^{\sigma}$.

Conversely, note that centrality of $p$ implies that 
\[\|a\|_\sigma=\|pa\|_\sigma=\overline{\tau}(p)^{-1/2}\|pa\|_\tau\]  
for all $a\in A$. 
Given a sequence $(x_n)_{n\in\N}$ in $A$ which is Cauchy with respect to $\|\cdot\|_{2,\sigma}$, it follows from the 
first identity above that $(px_n)_{n\in\N}$ has the same limit in $\overline{A}^\sigma$, while the second identity shows
that $(px_n)_{n\in\N}$ is also Cauchy with respect to $\|\cdot\|_{2,\tau}$. Since its limit belongs to $p\overline{A}^\tau$,
this shows the converse inclusion and proves the claim.

To finish the proof, it suffices to observe that if $\alpha_g$ becomes inner in the corner $p\overline{A}^\tau$, then
it becomes inner in the weak extension with respect to $\sigma$, contradicting strong outerness.
\end{proof}

Next, we verify that strongly outer actions induce actions with the $W^*$-Rokhlin property when passing to the weak
closure with respect to any trace with finite orbit. Note that for tracial von Neumann algebras, the assignment
$M\mapsto M^\omega\cap M'$ commutes with finite direct sums.

\begin{rem}
Let $(\M,\gamma)$ be a $G$-equivariant $W^*$-bundle, and adopt the notation and assumptions from \autoref{eg:EqW*bundle}. 
If $S\subseteq \partial_eT(A)$ is a closed subset, we define the restricted 2-norm $\|\cdot\|_{2,S}$ on $A$
to be 
\[\|a\|_{2,S}=\sup_{\tau\in S}\tau(a^*a)^{1/2}\]
for all $a\in A$. Note that $\|\cdot\|_{2,\partial_eT(A)}=\|\cdot\|_{2,u}$. 
We write $\M_S$ for the $W^*$-bundle obtained by perfoming the procedure described in \autoref{eg:EqW*bundle}
using the norm $\|\cdot\|_{2,S}$ instead of $\|\cdot\|_{2,u}$. When $S$ is $G$-invariant, then $\M_S$ is naturally a 
$G$-equivariant $W^*$-bundle.
\end{rem}

We will need to use some facts about induced $C^*$-algebras; see for example 
Section~3.6 in~\cite{Wil_crossed_2007}, whose notation we will follow. 
We will not prove the results in the greatest possibly generality, and 
will instead restrict to the case we are interested in.

\begin{nota} 
Let $A$ be a \ca, let $G$ be a discrete group, and let $H\leq G$ be
a subgroup with finite index. If $\alpha\colon H\to\Aut(A)$ is an action, 
we set 
\[\mathrm{Ind}_H^G(A,\alpha)=\{f\in C_b(G,A)\colon f(gh)=\alpha_{h^{-1}}(f(g)) \mbox{ for all } g\in G\},\]
with pointwise operations. We endow this algebra 
with the $G$-action $\mathrm{Ind}_H^G(\alpha)$ given by
\[\mathrm{Ind}_H^G(\alpha)_g(f)(k)=f(g^{-1}k)\] 
for all $g,k\in G$ and all $f\in \mathrm{Ind}_H^G(A,\alpha)$.
\end{nota}

It is immediate to check that if $\alpha\colon H\to\Aut(A)$ and 
$\beta\colon H\to\Aut(B)$ are conjugate actions, then so are
$\mathrm{Ind}_H^G(\alpha)\colon G\to \Aut(\mathrm{Ind}_H^G(A,\alpha))$
and 
$\mathrm{Ind}_H^G(\beta)\colon G\to \Aut(\mathrm{Ind}_H^G(B,\beta))$.
The analogous statement for cocycle conjugacy is also true. Since we
were not able to find a reference, and its proof is not straightforward, 
we provide one here for the convenience of the reader. Again, we restrict 
to the case we are interested in and do not prove the most general result.

\begin{prop}\label{prop:IndAlgCocConj}
Let $A$ and $B$ be unital \ca s, let $G$ be a discrete group, and 
let $H\leq G$ be
a subgroup with finite index. Let $\alpha\colon H\to\Aut(A)$ 
and $\beta\colon H\to\Aut(B)$ be cocycle conjugate actions. Then 
$\mathrm{Ind}_H^G(\alpha)$
and 
$\mathrm{Ind}_H^G(\beta)$ are cocycle conjugate. 
\end{prop}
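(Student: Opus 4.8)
The plan is to exploit that cocycle conjugacy is an equivalence relation together with the fact, noted just before the proposition, that induction sends conjugate actions to conjugate actions, and then to reduce the whole statement to a single clean construction. By definition of cocycle conjugacy there is an $\alpha$-cocycle $u\colon H\to\U(A)$ such that $\alpha^u$ is conjugate to $\beta$. Since induction preserves conjugacy, $\mathrm{Ind}_H^G(\alpha^u)$ is conjugate, hence cocycle conjugate, to $\mathrm{Ind}_H^G(\beta)$. By transitivity it therefore suffices to treat the special case $\beta=\alpha^u$, namely: \emph{for any $\alpha$-cocycle $u\colon H\to\U(A)$, the induced actions $\mathrm{Ind}_H^G(\alpha)$ and $\mathrm{Ind}_H^G(\alpha^u)$ are cocycle conjugate.}

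To handle this case I would first extend $u$ to a bounded function $v\colon G\to\U(A)$. Fix a transversal $g_1,\dots,g_n$ for the left cosets $G/H$, set $v_{g_i}=1$, and for $g=g_ih$ with $h\in H$ define $v_g=u_{h^{-1}}^*$. A short computation, driven by the cocycle identity $u_{h_1h_2}=u_{h_1}\alpha_{h_1}(u_{h_2})$, shows that $v$ satisfies the twisted relation
\[v_{gh}=\alpha_{h^{-1}}(v_g)\,u_{h^{-1}}^*\qquad(g\in G,\ h\in H),\]
equivalently $v_{gh}u_{h^{-1}}=\alpha_{h^{-1}}(v_g)$, which is precisely the compatibility needed for pointwise conjugation by $v$ to turn $\alpha^u$-equivariance into $\alpha$-equivariance. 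Concretely, I would define $\Phi\colon \mathrm{Ind}_H^G(A,\alpha^u)\to \ell^\I(G,A)$ by $\Phi(f)(g)=v_gf(g)v_g^*$ and verify, using the displayed relation, that $\Phi$ lands inside $\mathrm{Ind}_H^G(A,\alpha)$; since $\Phi$ is pointwise conjugation by a unitary it is a $\ast$-isomorphism, with inverse $F\mapsto(g\mapsto v_g^*F(g)v_g)$.

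It then remains to identify $\Phi\circ\mathrm{Ind}_H^G(\alpha^u)_g\circ\Phi^{-1}$ as a cocycle perturbation of $\mathrm{Ind}_H^G(\alpha)_g$. Evaluating both sides at a point $k\in G$ gives
\[\bigl(\Phi\circ\mathrm{Ind}_H^G(\alpha^u)_g\circ\Phi^{-1}\bigr)(F)(k)=\Ad\!\bigl(v_kv_{g^{-1}k}^*\bigr)\bigl(\mathrm{Ind}_H^G(\alpha)_g(F)(k)\bigr),\]
so I would set $W_g\in\ell^\I(G,A)$ to be $W_g(k)=v_kv_{g^{-1}k}^*$. Two checks complete the argument: that each $W_g$ genuinely lies in $\mathrm{Ind}_H^G(A,\alpha)$ (this uses the twisted relation to cancel the factors of $u$), and that $W$ is a $\mathrm{Ind}_H^G(\alpha)$-cocycle, i.e. $W_{g_1g_2}=W_{g_1}\,\mathrm{Ind}_H^G(\alpha)_{g_1}(W_{g_2})$, which falls out of a telescoping of the defining formula. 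Then $\Phi$ implements a conjugacy between $\mathrm{Ind}_H^G(\alpha^u)$ and $\mathrm{Ind}_H^G(\alpha)^W$, yielding the desired cocycle conjugacy.

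The main obstacle is the middle step: finding the correct way to ``straighten'' an $H$-cocycle over the larger group $G$. The key realization is that although $u$ is only a cocycle for $H$, the extension $v$ need not be a cocycle at all — it only has to satisfy the twisted equivariance relation — and it is the \emph{differences} $v_kv_{g^{-1}k}^*$ that assemble into an honest $G$-cocycle with values in the induced algebra. Once the relation $v_{gh}=\alpha_{h^{-1}}(v_g)u_{h^{-1}}^*$ is isolated, every remaining verification is a routine manipulation powered by the cocycle identity for $u$.
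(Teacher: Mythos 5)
Your proposal is correct and follows essentially the same route as the paper: after reducing to the cocycle-perturbation case $\beta=\alpha^u$ on a single algebra, your extension $v_g=u_{h^{-1}}^*$ (for $g=g_ih$) is exactly the adjoint of the paper's unitary $w(g)=u_{g^{-1}s(g)}$, your map $\Phi=\Ad(v)$ is the inverse of the paper's $\Ad(w)$, and your cocycle $W_g(k)=v_kv_{g^{-1}k}^*=w(k)^*w(g^{-1}k)$ coincides with the paper's $v_g=w^*\texttt{Lt}_g(w)$. All the verifications you outline (the twisted relation, membership of $W_g$ in the induced algebra, the telescoping cocycle identity, and the intertwining) go through as claimed.
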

\begin{proof}
It suffices to assume that $A=B$ and that there exist unitaries 
$u_h\in A$, for $h\in H$, satisfying 
\[u_{h_1h_2}=u_{h_1}\alpha_{h_1}(u_{h_2}) \ \ \mbox{ and } \ \ 
 \Ad(u_h)\circ\alpha_h=\beta_h
\]
for all $h_1,h_2,h\in H$. 
Let $s\colon G/H\to G$ be a section for the canonical quotient map $G\to G/H$. 
To lighten the notation, for $g\in G$ we abbreviate $s(gH)$ to $s(g)$ throughout. Note that $s(gh)=s(g)$ for all $g\in G$ and all $h\in H$.
Let $w\in C_b(G,A)$ be the unitary given by 
$w(g)=u_{g^{-1}s(g)}$ for all $g\in G$. 

\textbf{Claim 1:} \emph{$\Ad(w)$ maps $\mathrm{Ind}_H^G(A,\alpha)$ to
$\mathrm{Ind}_H^G(A,\beta)$}. To prove this, let $f\in \mathrm{Ind}_H^G(A,\alpha)$, let $g\in G$ and let $h\in H$. Then
\begin{align*}
(wfw^*)(gh)&= w(gh) f(gh) w(gh)^*\\
&= u_{h^{-1}g^{-1}s(g)}\alpha_{h^{-1}}(f(g)) u_{h^{-1}g^{-1}s(g)}^*\\
&= u_{h^{-1}} \alpha_{h^{-1}}(u_{g^{-1}s(g)}f(g)u_{g^{-1}s(g)}^*)u_{h^{-1}}^*\\
&= \beta_{h^{-1}}((w fw^*)(g)),
\end{align*}
as desired. 

Denote by $\texttt{Lt}\colon C_b(G,A)\to C_b(G,A)$ the action of left
translation, and note that $\mathrm{Ind}_H^G(\alpha)$ is the restriction
of $\texttt{Lt}$ to the invariant subalgebra $\mathrm{Ind}_H^G(A,\alpha)$,
and similarly for $\mathrm{Ind}_H^G(\beta)$.
We define a $\texttt{Lt}$-cocycle $v\colon G\to \mathcal{U}(C_b(G,A))$ by
\[v_g=w^*\texttt{Lt}_g(w)\]
for all $g\in G$.

\textbf{Claim 2:} \emph{$v_g$ belongs to $\mathrm{Ind}_H^G(A,\alpha)$ for 
all $g\in G$.} For $g,k\in G$ and $h\in H$, we have 
\begin{align*}
v_g(kh)&= w^*(kh)w(g^{-1}kh)\\
&= u_{h^{-1}k^{-1}s(k)}^*u_{h^{-1}k^{-1}gs(g^{-1}k)}\\
&= (u_{h^{-1}}\alpha_{h^{-1}}(u_{k^{-1}s(k)}))^*u_{h^{-1}}\alpha_{h^{-1}}(u_{k^{-1}gs(g^{-1}k)})\\
&= \alpha_{h^{-1}}(u_{k^{-1}s(k)}^*u_{k^{-1}gs(g^{-1}k)})\\
&= \alpha_{h^{-1}}(v_g(k)),
\end{align*}
as desired.

Denote by $\varphi\colon \mathrm{Ind}_H^G(A,\alpha)\to \mathrm{Ind}_H^G(A,\beta)$ the isomorphism given by $\varphi(f)=\Ad(w)(f)$ for all
$f\in \mathrm{Ind}_H^G(A,\alpha)$; see Claim~1. 
By Claim~2 and the fact that $\mathrm{Ind}_H^G(\alpha)$ is the restriction
of $\texttt{Lt}$ to $\mathrm{Ind}_H^G(A,\alpha)$, it follows that
$v$ is a 1-cocycle for $\mathrm{Ind}_H^G(A,\alpha)$. 

\textbf{Claim 3:} \emph{for all $g\in G$, we have}
\[
\varphi\circ \Ad(v_g)\circ \mathrm{Ind}_H^G(\alpha)_g =\mathrm{Ind}_H^G(\beta)_g.
\]
Given $g\in G$ and $f\in \mathrm{Ind}_H^G(A,\alpha)$, we have
\begin{align*}
\varphi(\Ad(v_g)(\mathrm{Ind}_H^G(A,\alpha)_g(f))) &= 
w(w^*\texttt{Lt}_g(w)\texttt{Lt}_g(f)\texttt{Lt}_g(w)^*w)w^*\\
&=\texttt{Lt}_g(w fw^*)\\
&= \mathrm{Ind}_H^G(\beta)_g(\varphi(f)),
\end{align*}
as desired. It follows that $\mathrm{Ind}_H^G(\alpha)$ and $\mathrm{Ind}_H^G(\beta)$ are cocycle conjugate. 
\end{proof}

\begin{prop}\label{prop:FiberW*Rp}
Let $G$ be a countable amenable group, let $A$ be a separable, \uca, and let $\alpha\colon G\to\Aut(A)$ be a
strongly outer action.
Let $\tau$ be an extreme trace on $A$ satisfying $\overline{A}^\tau\cong \mathcal{R}$ and suppose that the $G$-orbit $G\cdot \tau$ of 
$\tau$ is finite. Then the weak extension $\gamma_\tau$ of $\alpha$ to $\mathcal{M}_{G\cdot \tau}$ is $\mu_G$-McDuff.
\end{prop}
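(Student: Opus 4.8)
The plan is to realise $(\mathcal{M}_{G\cdot\tau},\gamma_\tau)$ as an induced action and then to recognise it, up to cocycle conjugacy, as a $\mu_G$-absorbing system. Write $H=G_\tau\leq G$ for the stabiliser of $\tau$; since the orbit $G\cdot\tau$ is finite, $H$ has finite index, and it is a countable amenable group. By \autoref{eg:EqW*bundle} the bundle $\mathcal{M}_{G\cdot\tau}$ is the von Neumann algebra $\bigoplus_{\sigma\in G\cdot\tau}\overline{A}^\sigma$; choosing coset representatives for $G/H$ and using the identifications $\overline{\alpha}_g\colon\overline{A}^\tau\to\overline{A}^{g\cdot\tau}$ produces a $G$-equivariant isomorphism
\[(\mathcal{M}_{G\cdot\tau},\gamma_\tau)\;\cong\;\bigl(\mathrm{Ind}_H^G(\overline{A}^\tau),\ \mathrm{Ind}_H^G(\overline{\alpha}^\tau|_H)\bigr),\]
where $\overline{\alpha}^\tau|_H\colon H\to\Aut(\overline{A}^\tau)$ is the genuine weak extension (it is well defined precisely because $H$ fixes $\tau$).

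First I would verify that $\overline{\alpha}^\tau|_H$ is an \emph{outer} action on $\mathcal{R}$. For $g\in H\setminus\{1\}$ we have $\tau\in T(A)^{\alpha_g}$, so \autoref{prop:StrOutStrPropOut} shows that $\overline{\alpha}^\tau_g$ is properly outer; as $\overline{A}^\tau\cong\mathcal{R}$ is a factor, proper outerness coincides with outerness. By Ocneanu's uniqueness of outer actions of countable amenable groups on $\mathcal{R}$ \cite{Ocn_actions_1985}, $\overline{\alpha}^\tau|_H$ is cocycle conjugate to $\mu_H$. On the other hand, \autoref{rem:muG} gives that $\mu_G|_H$ is also cocycle conjugate to $\mu_H$. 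Hence $\overline{\alpha}^\tau|_H$ and $\mu_G|_H$ are cocycle conjugate $H$-actions on $\mathcal{R}$, and \autoref{prop:IndAlgCocConj} then yields that $\gamma_\tau$ is cocycle conjugate to $\mathrm{Ind}_H^G(\mu_G|_H)$.

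Next I would invoke induction in stages. Since $\mu_G|_H$ is the restriction of the genuine $G$-action $\mu_G$, the assignment $f\mapsto\bigl(gH\mapsto(\mu_G)_g(f(g))\bigr)$ is a well-defined $G$-equivariant isomorphism
\[\mathrm{Ind}_H^G(\mu_G|_H)\;\cong\;\mu_G\otimes\lambda_{G/H},\]
where $\lambda_{G/H}$ is the left-translation action of $G$ on the finite-dimensional algebra $\C^{G/H}$ and $\mu_G\otimes\lambda_{G/H}$ acts on $\mathcal{R}\,\overline{\otimes}\,\C^{G/H}=\bigoplus_{G/H}\mathcal{R}$; one checks directly that this map intertwines the two $G$-actions. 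Finally, because $\mu_G$ is McDuff (\autoref{rem:muG}), the system $(\mathcal{R},\mu_G)$ is cocycle conjugate to $(\mathcal{R}\,\overline{\otimes}\,\mathcal{R},\mu_G\otimes\mu_G)$. Tensoring this cocycle conjugacy with the fixed system $(\C^{G/H},\lambda_{G/H})$ and reordering tensor factors shows that $\mu_G\otimes\lambda_{G/H}$ is cocycle conjugate to $(\mu_G\otimes\lambda_{G/H})\otimes\mu_G$, which by \autoref{thm:W*BdleAbsR} is exactly the statement that $\mu_G\otimes\lambda_{G/H}$ is $\mu_G$-McDuff. Since being $\mu_G$-McDuff is a cocycle conjugacy invariant (condition (2) of \autoref{thm:W*BdleAbsR} transfers between cocycle conjugate actions, as in the proof of the implication (1)$\Rightarrow$(2) there), the chain of cocycle conjugacies assembled above forces $\gamma_\tau$ to be $\mu_G$-McDuff, as claimed.

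I expect the main obstacle to be the first step: setting up the $G$-equivariant identification of $(\mathcal{M}_{G\cdot\tau},\gamma_\tau)$ with the induced system rigorously, and confirming that \autoref{prop:IndAlgCocConj} — stated for unital \cas\ — applies to the von Neumann algebra $\overline{A}^\tau\cong\mathcal{R}$. This is harmless, since the induced algebra, the cocycle, and the intertwining isomorphism in that proposition are purely algebraic and the group is discrete, but the bookkeeping linking the weak extension $\gamma_\tau$ on the bundle to the induced $H$-action on a single fibre must be done carefully. The induction-in-stages isomorphism is standard, but I would spell it out to make sure it intertwines the relevant $G$-actions.
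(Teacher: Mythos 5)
Your proposal follows essentially the same route as the paper's proof: identify $(\mathcal{M}_{G\cdot\tau},\gamma_\tau)$ with the induced system $\mathrm{Ind}_H^G$ of the fiber action of the stabiliser $H$, use \autoref{prop:StrOutStrPropOut} together with Ocneanu's uniqueness theorem to replace that fiber action by $\mu_G|_H$, apply \autoref{prop:IndAlgCocConj}, and then use induction in stages to arrive at $(C(G/H,\mathcal{R}),\texttt{Lt}\otimes\mu_G)$, which is $\mu_G$-McDuff. The argument is correct; you merely spell out the final absorption step (and the direct passage from proper outerness to outerness on a factor) slightly more explicitly than the paper does.
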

\begin{proof}
Set $H=\mathrm{Stab}(\tau)$, which is a subgroup of $G$ with finite index.
Endow the finite set $G/H$ with its canonical $G$-action by left translation.
Denote by $\pi\colon \M_{G\cdot\tau}\to M_\tau$ the canonical quotient
map. 
Then there is an equivariant bijection 
$\sigma\colon \mathrm{Prim}(\M_{G\cdot \tau})\to G/H$ satisfying 
$\sigma(\ker(\pi))=H$.
We abbreviate $\gamma_\tau$ to $\gamma$, and write 
$\gamma^H \colon H\to\Aut(\M_\tau)$ for the induced action.

By Proposition~3.53 in~\cite{Wil_crossed_2007}, the map 
\[\varphi\colon (\M_{G\cdot\tau},\gamma)\to (\mathrm{Ind}_H^G(\M_\tau,\gamma^H),\mathrm{Ind}_H^G(\gamma^H))\]
given by $\varphi(a)(g)=\pi(\gamma_{g^{-1}}(a))$ for all $a\in \M_{G\cdot\tau}$
and all $g\in G$, is an equivariant isomorphism. 

Note that $\gamma$ has the $W^*$-Rokhlin property by 
Ocneanu's noncommutative
Rokhlin theorem (Theorem~6.1 in~\cite{Ocn_actions_1985}), 
thanks to \autoref{prop:StrOutStrPropOut}. In particular, $\gamma^H$ is
an outer action of $H$ on $\M_\tau\cong \mathcal{R}$. 
Let $\mu_G\colon G\to\Aut(\mathcal{R})$ be the outer action described in \autoref{eg:muG}, and note that the restriction of $\mu_G$ to $H$, which
is $\mu_H$, is cocycle conjugate to $\gamma^H$. By \autoref{prop:IndAlgCocConj},
there is a cocycle conjugacy of $G$-dynamical systems
\[(\mathrm{Ind}_H^G(\M_\tau,\gamma^H),\mathrm{Ind}_H^G(\gamma^H))\cong_{\mathrm{cc}}(\mathrm{Ind}_H^G(\R,\mu_H),\mathrm{Ind}_H^G(\mu_H)).\]

Since $\mu_H$ is the restriction of $\mu_G$ to $H$, the last part of 
Example~3.47 in~\cite{Wil_crossed_2007} shows that the map
\[\psi\colon \left(\mathrm{Ind}_H^G(\R,\mu_H),\mathrm{Ind}_H^G(\mu_H)\right)
 \to (C(G/H,\mathcal{R}),\texttt{Lt}\otimes\mu_G)
\]
given by $\psi(f)(gH)=(\mu_G)_g(f(g))$ for all 
$f\in \mathrm{Ind}_H^G(\R,\mu_H)$ and all $g\in G$, is an equivariant
isomorphism. 

It follows from the above discussion that $(\M_{G\cdot\tau},\gamma)$
is cocycle conjugate to $(C(G/H,\mathcal{R}),\texttt{Lt}\otimes\mu_G)$, 
and is therefore $\mu_G$-McDuff, as desired.
\end{proof}

We are now ready to prove the main result of this section.

\begin{proof}[Proof of \autoref{thm:wtRp}.]
Note that $\alpha$ is strongly outer if and only if $\alpha\otimes\id_{\mathcal{Z}}$ is strongly outer. Thus,
it suffices to prove the theorem assuming that $\alpha$ is conjugate to $\alpha\otimes\id_{\mathcal{Z}}$.
We adopt the notation from \autoref{eg:EqW*bundle}. In particular, we denote by $(\M,\gamma)$ the equivariant $W^*$-bundle obtained
from $(A,\alpha)$. Let $\omega$ be a free ultrafilter over $\N$, and write $\pi\colon A_\omega\cap A' \to \M^\omega\cap \M'$
for the canonical quotient map whose kernel is contained in $J_A$; 
see Lemma~3.10 in \cite{BBSTWW_covering_2015}, which is an improvement of
Theorem~3.1 in~\cite{MatSat_decomposition_2014}.

(2) implies (1). The argument is mostly standard, but we include the argument for completeness.
Let $\ep>0$, let $S_1,\ldots, S_N$ be a paving family of subsets of $G$, and let
$(f_{\ell,g})_{\ell=1,\ldots,N, g\in S_\ell}$ be a family of Rokhlin contractions as in
\autoref{df:wtRp}. Set $p_{\ell,g}=\pi(f_{\ell,g})\in \M^\omega\cap \M'$. Then
$p_{\ell,g}$ is a projection, and the family $(p_{\ell,g})_{\ell=1,\ldots,N, g\in S_\ell}$
witnesses the fact that $\gamma\colon G\to\Aut(\M)$ has the $W^*$-Rokhlin property from
\autoref{df:W*Rp}. By considering the canonical surjections 
onto the fibers of $\M$, which are finite
direct sums of copies of $\R$, we deduce that the induced actions on the fibers of $\M$ have
the $W^*$-Rokhlin property, and are therefore outer. It follows that $\alpha$ is strongly outer.

(1) implies (2).
We abbreviate $\mu_G$ to $\mu$.
Note that $(\M_\tau,\gamma_\tau)$ is $\mu$-McDuff by \autoref{prop:FiberW*Rp}.
Thus, $\gamma$ is $\mu$-McDuff by \autoref{thm:MoRabsorbsMcDuff}, 
and in particular has the $W^*$-Rokhlin property from \autoref{df:W*Rp} by
\autoref{rem:muG}.
Let $S_1,\ldots, S_N$ be a paving family of subsets of $G$.
Find projections
$p_{\ell,g} \in \R^\omega\cap \R'$ for $\ell=1,\ldots,N$ and $g\in S_\ell$, satisfying:
\bi
\item $\mu^\omega_{gh^{-1}}(p_{\ell, h})=p_{\ell,g}$ for all $g,h\in S_\ell$ and all $\ell=1,\ldots,N$;
\item $p_{\ell,g}p_{k,h}=0$ for all $\ell,k=1,\ldots,N$, all $g\in S_\ell$ and all $h\in S_k$ with $(\ell,g)\neq (k,h)$;
\item $p_{\ell,g}\mu^\omega_h(p_{k,r})=\mu^\omega_h(p_{k,r})p_{\ell,g}$ for all $\ell,k=1,\ldots,N$, all $g\in S_\ell$ and all $r\in S_k$, and all $h\in G$;
\item $\sum\limits_{\ell=1}^N\sum\limits_{g\in S_\ell} p_{\ell, g}=1$.
\ei

Write $\tau_{\R^{\omega}}$ for the unique trace on the II$_1$-factor
$\R^\omega$.
Since $\pi\colon A_\omega\cap A'\to \M^\omega\cap \M'$ is surjective and has kernel contained in $J_A$, we can
lift the projections
\[1_\M\otimes p_{\ell,g}\in 1_\M\otimes (\R^\omega\cap \R')\subseteq (\M\overline{\otimes}\R)^\omega\cap (\M\overline{\otimes}\R)'\cong \M^\omega\cap \M'\]
to obtain positive contractions $e_{\ell,g}\in A_\omega\cap A'$, for $\ell=1,\ldots,N$ and $g\in S_\ell$, satisfying
\be
\item $\alpha^\omega_{gh^{-1}}(e_{\ell, h})-e_{\ell,g}\in J_A$ for all $\ell=1,\ldots,N$ and $g,h\in S_\ell$;
\item $e_{\ell,g}e_{k,h}\in J_A$ for all $\ell,k=1,\ldots,N$, all $g\in S_\ell$ and all $h\in S_k$ with $(\ell,g)\neq (k,h)$;
\item $e_{\ell,g}\alpha^\omega_h(e_{k,r})-\alpha^\omega_h(e_{k,r})e_{\ell,g}\in J_A$ for all $\ell,k=1,\ldots,N$, all $g\in S_\ell$ and all $r\in S_k$, and all $h\in G$;
\item $1-\sum\limits_{\ell=1}^N\sum\limits_{g\in S_\ell} e_{\ell, g}\in J_A$.
\item $\tau(e_{\ell,g})=\tau_{\R^\omega}(p_{\ell,g})=\tau_{\R^\omega}(p_{\ell,h})>0$ for all $\ell=1,\ldots,N$ and all $g,h\in S_\ell$.
\ee

Let $C$ be the (separable) $\alpha_\omega$-invariant subalgebra of $A_\omega$ generated by $A$ and the countable set
\[\{(\alpha_\omega)_g(e_{\ell,h})\colon g\in G, \ell=1,\ldots,N, \mbox{ and } h\in S_\ell\}.\]
Since $J_A$ is an equivariant $\sigma$-ideal in
$A_\omega$ by \autoref{prop:JAwSigmaId}, there exists a positive contraction $x\in (J_A\cap C')^{\alpha_\omega}$ with $xc=c$ for
all $c\in J_A\cap C$. We use this element $x$ to ``correct'' the positive contractions $e_{\ell, g}$, as follows. For $\ell=1,\ldots,N$
and $g\in S_\ell$, set $f_{\ell,g}=(1-x)e_{\ell,g}(1-x)$. We claim that these elements satisfy the conditions of \autoref{df:wtRp}.

First, $f_{\ell,g}$ is a positive contraction, and it commutes with the copy of $A$ in $A_\omega$ because so do $x$ and $e_{\ell,g}$.
Therefore $f_{\ell,g}$ belongs to $A_\omega\cap A'$. Observe that condition (d) is obviously satisfied.
To check condition (a) in \autoref{df:wtRp}, let $\ell=1,\ldots,N$ and $g,h\in S_\ell$. Observe that $(1-x)c=0$ for every $c\in C\cap J_A$, and use this
with $c=\alpha^\omega_{gh^{-1}}(e_{\ell, h})-e_{\ell,g}$
at the second step to get
\begin{align*}
(\alpha_\omega)_{gh^{-1}}(f_{\ell, h})-f_{\ell,g}&=(\alpha_\omega)_{gh^{-1}}((1-x)e_{\ell, h}(1-x))-(1-x)e_{\ell,g}(1-x)\\
&=(1-x)\left((\alpha_\omega)_{gh^{-1}}(e_{\ell, h})-e_{\ell,g}\right)(1-x)\\
&=0.
\end{align*}

To check condition (b), let $\ell,k=1,\ldots,N$, let $g\in S_\ell$ and let $h\in S_k$ with $(\ell,g)\neq (k,h)$.
We use that $x$ commutes with the elements $e_{\ell,g}$ at the second step to get
\[ f_{\ell,g}f_{k,h}=(1-x)e_{\ell,g}(1-x)^2e_{k,h}(1-x) = (1-x)e_{\ell,g}e_{k,h}(1-x)^3=0.\]

To check condition (c), let $\ell,k=1,\ldots,N$, let $g\in S_\ell$ and $r\in S_k$, and let $h\in G$. Then
\begin{align*}
\left[f_{\ell,g},(\alpha_\omega)_h(f_{k,r})\right] &= \left[(1-x)e_{\ell,g}(1-x),(\alpha_\omega)_h((1-x)e_{k,r}(1-x))\right]\\
&= (1-x)^2\left(\left[e_{\ell,g},(\alpha_\omega)_h(e_{k,r})\right] \right) (1-x)^2=0.
\end{align*}

To check condition (e), observe that
\begin{align*}\tau(f_{\ell,g})&=\tau((1-x)e_{\ell,g}(1-x))=\tau(e_{\ell,g})+\tau(e_{\ell, g}x)+\tau(xe_{\ell, g})+\tau(xe_{\ell,g}x)\\
&=\tau(e_{\ell,g})\end{align*}
for all $\ell=1,\ldots,N$, all $g\in S_\ell$ and all traces $\tau\in T(A)$.
Hence $\tau(f_{\ell,g})=\tau_{\R^\omega\cap \R'}(p_{\ell,g})>0$ for all $\ell=1,\ldots,N$ and all
$g\in S_\ell$, and this value is independent of $\tau$ and of $g\in S_\ell$.
Hence condition (e) is satisfied, and the result follows.
\end{proof}



\section{Finiteness of the Rokhlin dimension}

In this section, we complete the proof of Theorem~\ref{thmintro:Equiv} by computing the Rokhlin dimension of 
a strongly outer action. More precisely, we show the following:

\begin{thm}\label{thm:dimRokatmost2}
Let $G$ be a residually finite countable amenable group, let $A$ be a separable, simple, finite, \uca\ with
property (SI) such that $T(A)$ is a nonempty Bauer simplex
and that the induced action of $G$ on $\partial_eT(A)$ has finite orbits and Hausdorff orbit space.
Assume moreover that 
$\overline{A}^\tau$ is hyperfinite for all $\tau\in\partial_eT(A)$.
Let $\alpha\colon G\to\Aut(A)$ be a strongly outer action. Then
\[\dimRok(\alpha\otimes\id_{\mathcal{Z}})\leq 2.\]
\end{thm}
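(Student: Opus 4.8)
The plan is to reduce the bound $\dimRok(\alpha\otimes\id_{\mathcal{Z}})\le 2$ to the construction, for each finite-index normal subgroup $H\trianglelefteq G$ drawn from a cofinal sequence witnessing residual finiteness, of three ``colours'' of Rokhlin towers indexed by the finite quotient $Q=G/H$. Following Szab\'o--Wu--Zacharias \cite{SzaWuZac_rokhlin_2014}, $\dimRok(\alpha\otimes\id_{\mathcal{Z}})\le 2$ holds as soon as, for each such $H$, there are positive contractions $f^{(l)}_{\bar g}$, for $l=0,1,2$ and $\bar g\in Q$, in the central sequence algebra $F=(A\otimes\mathcal{Z})_\omega\cap(A\otimes\mathcal{Z})'$, mutually orthogonal within each colour, permuted by the induced action via $\alpha^\omega_s(f^{(l)}_{\bar g})=f^{(l)}_{s\bar g}$, and satisfying $\sum_{l=0}^{2}\sum_{\bar g\in Q}f^{(l)}_{\bar g}=1$. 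Equivalently, I would produce three equivariant completely positive contractive order zero maps $C(Q)\to F$, with $G$ acting on $C(Q)$ by left translation, whose units sum to $1$; \autoref{prop:EqOzCommRges} is the natural device for assembling and normalising such order zero data.

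The engine for producing these towers is the von Neumann algebraic Rokhlin structure already isolated in Section 3. Passing to the equivariant $W^*$-bundle $(\M,\gamma)$ attached to $(A,\alpha)$ as in \autoref{eg:EqW*bundle}, \autoref{prop:FiberW*Rp} shows each fibre action is $\mu_G$-McDuff (the fibres being $\cong\R$ by the hyperfiniteness hypothesis), whence $\gamma$ itself is $\mu_G$-McDuff by \autoref{thm:MoRabsorbsMcDuff} and therefore has the $W^*$-Rokhlin property of \autoref{df:W*Rp} by \autoref{rem:muG}. Specialising the resulting Rokhlin projections to the finite quotient $Q=G/H$, using residual finiteness and outerness of the induced $Q$-action on the fibres, I would extract genuine projections $p_{\bar g}\in\M^\omega\cap\M'$, indexed by $Q$, that are mutually orthogonal, sum to $1$, and satisfy $\gamma^\omega_s(p_{\bar g})=p_{s\bar g}$. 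In other words, at the tracial level the tower already exists in colour zero alone: the finite-group obstruction to Rokhlin towers is invisible to the trace.

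The heart of the proof is then to transport this tracial, dimension-zero tower to an honest norm-level structure on $A\otimes\mathcal{Z}$, and it is here that the extra colours appear. Lifting the $p_{\bar g}$ through the canonical surjection $F\to\M^\omega\cap\M'$, whose kernel is contained in the trace-kernel ideal $J_{A\otimes\mathcal{Z}}$, I obtain positive contractions $e_{\bar g}\in F$ realising all tower and equivariance relations modulo $J_{A\otimes\mathcal{Z}}$, with remainder $1-\sum_{\bar g}e_{\bar g}\in J_{A\otimes\mathcal{Z}}$. To convert these relations, valid only in trace, into exact norm relations I would invoke property (SI) together with $\mathcal{Z}$-stability: $\mathcal{Z}$-stability supplies room inside each tower to split off a controllable summand, while property (SI) provides the Cuntz comparison needed to dominate the trace-small remainder by a uniformly large positive element and reabsorb it. The equivariant $\sigma$-ideal property of $J_{A\otimes\mathcal{Z}}$ (\autoref{prop:JAwSigmaId}) is exactly what permits this correction to be carried out $G$-equivariantly, as in the proof of \autoref{thm:wtRp}. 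Performing the absorption while keeping the $Q$-equivariance and the order zero relations exact cannot be done in a single colour (this is the finite-group obstruction resurfacing), and the bookkeeping distributes the correction over three colours, giving $l\in\{0,1,2\}$.

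The main obstacle I anticipate is precisely this equivariant (SI)-correction: upgrading the order zero and partition-of-unity relations from ``modulo $J_{A\otimes\mathcal{Z}}$'' to ``on the nose'' while simultaneously preserving exact equivariance $\alpha^\omega_s(f^{(l)}_{\bar g})=f^{(l)}_{s\bar g}$ under the finite group $Q$ and holding the number of colours to three. Property (SI) is designed to absorb trace-small defects, but each absorption must respect the $Q$-symmetry of the towers and must not destroy the order zero condition in the remaining colours; controlling these constraints at once, and verifying that two extra colours genuinely suffice, is the delicate analytic and combinatorial core of the argument, the remaining verifications being routine applications of \autoref{prop:EqOzCommRges} and the central-sequence calculus.
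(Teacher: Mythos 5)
Your setup is right as far as it goes: the reduction to finite quotients $Q=G/H$, the passage to the equivariant $W^*$-bundle, the use of \autoref{prop:FiberW*Rp}, \autoref{thm:MoRabsorbsMcDuff} and \autoref{thm:W*BdleAbsR} to get Rokhlin-type structure at the von Neumann level, and the role of the equivariant $\sigma$-ideal property of the trace-kernel ideal are all consistent with the paper. But the core of the argument --- the mechanism that actually produces \emph{exactly three} colours of honest norm-level towers --- is absent. You propose to lift the single-colour tracial tower $(p_{\bar g})_{\bar g\in Q}$ to positive contractions satisfying the relations modulo $J_{A\otimes\mathcal{Z}}$ and then to ``reabsorb the trace-small remainder'' via property (SI), asserting that ``the bookkeeping distributes the correction over three colours.'' No such bookkeeping is exhibited, and you concede in your final paragraph that verifying two extra colours suffice is ``the delicate analytic and combinatorial core.'' That concession is the theorem: without a concrete device forcing the number three, the proof is not there, and a direct (SI)-correction of lifted projections is exactly the kind of step that the finite-group $K$-theoretic obstruction prevents from working colour by colour.

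The paper's route is structurally different at this point and worth internalising. Property (SI) is \emph{not} used to correct the towers; it is used once, to produce a single invariant contraction $s$ with $s^*s=1-\rho(1)$ and $\rho(e^{\otimes k})s=s$, where $\rho$ is an equivariant order zero lift of a matrix algebra embedding $\B(\ell^2(G/H)^{\otimes k})\to\M_\R^\omega\cap\M_\R'$ coming from absorption of the model action $\mu^G_H$. By \autoref{thm:univprop}, the pair $(\rho,s)$ yields a unital equivariant homomorphism from the dimension drop algebra $I^{(k)}_{G/H}$ into $(A_{\mathcal Z})_\omega\cap A_{\mathcal Z}'$. The three colours are then read off from an explicit computation inside that model algebra: \autoref{prop:DimRok1} shows, via Fell's absorption principle, that the product-type action on $\bigotimes_n\B(\ell^2(G)^{\otimes n}\oplus\C)$ has Rokhlin dimension exactly $1$ (the $\oplus\,\C$ summand is what circumvents the divisibility obstruction, at the cost of one extra colour), and \autoref{prop:GactIkdim2} adds a third colour from the cone variable of the dimension drop algebra (the $h_1p_g$ terms at the endpoint where exact Rokhlin projections exist). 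Pushing these concrete towers forward through the unital equivariant homomorphism gives the bound $\dimRok(\alpha\otimes\id_{\mathcal Z})\le 2$. In short: the towers are never corrected; they are imported ready-made from a model $C^*$-algebra, and the number $2$ is computed there, not negotiated during an absorption argument.
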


As mentioned before, the assumption that all weak closures of $A$ with respect to (extreme) traces be hyperfinite
is automatic when $A$ is nuclear, and it is also satisfied in other interesting cases such as when $A$ has finite
tracial rank. 

\begin{rem}
Adopt the assumptions of the theorem above, and suppose moreover that $\partial_eT(A)$ has finite covering dimension.
It will follow from \autoref{thm:EqJiangSuAbs} that $\alpha$ is cocycle conjugate
to $\alpha\otimes\id_{\mathcal{Z}}$, and hence that $\alpha$ itself has Rokhlin dimension at most 2.
\end{rem}

Our result is inspired by analogous ones by Liao in \cite{Lia_rokhlin_2016, Lia_rokhlin_2017}, where similar facts are proved for $\Z^m$-actions.
Nonetheless, our approach differs significantly from Liao's, in that we obtain the Rokhlin towers by centrally embedding suitable actions
on dimension drop algebras. The advantage of our approach is that it does not require any restrictions on the group: in particular, we are able to treat
groups with torsion, as well as groups that are not finitely generated. (For example, the application of property (SI) in~\cite{Lia_rokhlin_2016}, particularly Theorem~6.4 there, makes essential use of the fact that $\Z$ has no torsion, and the methods used there seem to break down 
already for finite groups.)

The notion of Rokhlin dimension has been defined in~\cite{HirWinZac_rokhlin_2015} for finite group and integer actions on unital \ca s,
and extended to the non-unital case in \cite{HirPhi_rokhlin_2015}, and to actions of amenable residually
finite groups in~\cite{SzaWuZac_rokhlin_2014}. (There has also been some work on Rokhlin dimension for non-discrete groups;
see for example, \cite{Gar_rokhlin_2017}, \cite{Gar_regularity_2017}, \cite{GarHirSan_rokhlin_2017} and~\cite{HirSzaWinWu_rokhlin_2017}.) We recall the definition below. For a subgroup $H \subseteq G$ and for $g\in G$, we denote by $\overline{g}$ the left coset $gH$.

\begin{df}[See Definition~A, Remark~3.2 and Lemma~5.7 in~\cite{SzaWuZac_rokhlin_2014}]\label{df:Rdim}
Let $G$ be a countable residually finite group, let $A$ be a separable \uca, and let $\alpha\colon G\to\Aut(A)$ be an
action. Given $d\in\Z$ with $d\geq 0$, we say that $\alpha$ has \emph{Rokhlin dimension at most $d$}, and write $\dimRok(\alpha)\leq d$,
if for any normal subgroup $H\leq G$ of finite index, 
there exist positive contractions
$f^{(j)}_{\overline{g}} \in A_\omega\cap A'$, for $j=0,\ldots,d$ and for $\overline{g}\in G/H$, satisfying:
\be
\item[(a)] $(\alpha_\omega)_{g}(f^{(j)}_{\overline{h}})=f^{(j)}_{\overline{gh}}$ for all $j=0,\ldots,d$ and for all $g \in G$ and $\overline{h} \in G/H$;
\item[(b)] $f^{(j)}_{\overline{g}}f^{(j)}_{\overline{h}}=0$ for all $j=0,\ldots,d$ and for all $\overline{g},\overline{h}\in G/H$ with $\overline{g}\neq \overline{h}$;
\item[(c)] $\sum\limits_{j=0}^d\sum\limits_{\overline{g}\in G/H} f^{(j)}_{\overline{g}}=1$.
\ee

The \emph{Rokhlin dimension} of $\alpha$, denoted $\dimRok(\alpha)$, is the smallest integer $d$ such that $\dimRok(\alpha)\leq d$.
\end{df}

There is a strengthening of the Rokhlin dimension, called \emph{Rokhlin dimension with commuting towers}, where the
elements $f_{\overline{g}}^{(j)}$ are assumed to moreover commute with each other. We will not deal with this notion here.

We record here an equivalent definition of Rokhlin dimension, which uses seemingly weaker conditions. That both definitions
are equivalent is an immediate consequence of Kirchberg's $\ep$-test.

\begin{rem} \label{rmk:EquivalenceDimRok}
In the context of \autoref{df:Rdim}, we have $\dimRok(\alpha)\leq d$ if and only if for any normal subgroup $H \leq G$ of finite index, for any finite subset $G_0 \subseteq G$, for every $\ep>0$ and for every finite
subset $F\subseteq A$, there are positive contractions
$f^{(j)}_{\overline{g}} \in A_\omega$, for $j=0,\ldots,d$ and for $\overline{g}\in G/H$, satisfying:
\be
\item[(a)] $\left\|(\alpha_\omega)_{g}(f^{(j)}_{\overline{h}})-f^{(j)}_{\overline{gh}}\right\|<\ep$ for all $j=0,\ldots,d$, for all $g\in G_0$ and for all $\overline{h}\in H$;
\item[(b)] $\left\|f^{(j)}_{\overline{g}}f^{(j)}_{\overline{h}}\right\|<\ep$ for all $j=0,\ldots,d$ and for all $\overline{g},\overline{h}\in G/H$ with $\overline{g}\neq \overline{h}$;
\item[(c)] $\left\|1-\sum\limits_{j=0}^d\sum\limits_{\overline{g}\in G/H} f^{(j)}_{\overline{g}}\right\|<\ep$;
\item[(d)] $\left\|af^{(j)}_{\overline{g}}-f^{(j)}_{\overline{g}}a\right\|<\ep$ for all $a\in F$, for all $\overline{g}\in G/H$ and for all $j=0,\ldots,d$.
\ee
\end{rem}

We begin by computing the Rokhlin dimension of a natural product-type action.
When $G$ is a finite group, it suffices to take $H=\{1\}$ in 
\autoref{df:Rdim}.

\begin{prop}\label{prop:DimRok1}
Let $G$ be a finite group, and set $D= \bigotimes\limits_{n\in\N} \B(\ell^2(G)^{\otimes n}\oplus \C)$. Denote by 
$\lambda\colon G\to \U(\ell^2(G))$ the left regular representation, and define an action
$\alpha\colon G\to\Aut(D)$ by $\alpha_g=\bigotimes\limits_{n\in\N}\Ad(\lambda^{\otimes n}_g\oplus 1)$
for all $g\in G$. Then $\dimRok(\alpha)=1$.
\end{prop}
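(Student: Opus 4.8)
The plan is to prove the two inequalities $\dimRok(\alpha)\le 1$ and $\dimRok(\alpha)\ge 1$ separately. Throughout I would use that, since $G$ is finite, it suffices in \autoref{df:Rdim} to take the trivial subgroup $H=\{1\}$, so that ``$\dimRok(\alpha)=0$'' means exactly that $\alpha$ has the Rokhlin property with Rokhlin projections indexed by $G$. I would also record the structure of $D$: it is the UHF algebra $\bigotimes_n M_{|G|^n+1}$, with unique trace $\tau$ and $\overline{D}^\tau\cong\R$; the induced action on $\R$ is the product-type action $\bigotimes_n\Ad(\lambda^{\otimes n}\oplus 1)$ of the kind treated in \autoref{thm:Absdeltanu} (a sequence of representations $\nu_n=\lambda^{\otimes n}\oplus 1$, each of dimension $|G|^n+1>1$). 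Being given by representations that are non-scalar for $g\ne e$, this action is outer, hence cocycle conjugate to $\mu_G$ by Ocneanu's uniqueness theorem, and therefore has the $W^*$-Rokhlin property (\autoref{eg:muG}).

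For the lower bound I would argue that $\alpha$ cannot have the Rokhlin property, using the standard $K$-theoretic divisibility obstruction (Izumi): if a finite group acts on a unital \ca\ with the Rokhlin property, then $[1]$ is divisible by $|G|$ in $K_0$. Here $K_0(D)\cong\Z[\{(|G|^n+1)^{-1}:n\ge 1\}]\subseteq\Q$ with $[1_D]=1$. For every prime $p$ dividing $|G|$ one has $|G|^n+1\equiv 1\pmod p$ for all $n$, so $p$ divides no denominator and $1/|G|\notin K_0(D)$; thus $[1_D]$ is not divisible by $|G|$, contradicting the Rokhlin property. Hence $\dimRok(\alpha)\ge 1$.

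For the upper bound I would produce two towers of positive contractions, that is, two equivariant c.p.c.\ order-zero maps $\phi_0,\phi_1\colon (C(G),\mathrm{lt})\to(D_\omega\cap D',\alpha^\omega)$ with $\phi_0(1)+\phi_1(1)=1$. The first is built by hand in the tensor tail: in each factor $\B(\ell^2(G)^{\otimes n}\oplus\C)$ the group permutes the standard basis of $\ell^2(G)^{\otimes n}$ freely, so the rank-one projections grouped along $G$-orbits give honest, $\alpha$-permuted, mutually orthogonal projections summing to $1-q_n$, where $q_n$ is the projection onto the fixed vector $\eta_n$; placing these in factors running off to infinity yields a central $\phi_0$ covering everything except a residual fixed-point projection. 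The second tower must cover this residual part: here I would use that in the trace completion the induced action is cocycle conjugate to $\mu_G$ and hence has the $W^*$-Rokhlin property, so that in $\M^\omega\cap\M'$ the residual piece is covered by honest Rokhlin projections. Transporting this back through the canonical surjection $\pi\colon D_\omega\cap D'\to\M^\omega\cap\M'$ (with kernel inside $J_D$) and correcting with the equivariant $\sigma$-ideal $J_D$ (\autoref{prop:JAwSigmaId}), one assembles the free cover and the residual $W^*$-cover into the two required $C^*$ towers via the order-zero machinery of \autoref{prop:UnivPropDimDropUV} and \autoref{prop:EqOzCommRges}. That exactly one extra color suffices — rather than the two of the general theorem — reflects that $D$ is already UHF, so no auxiliary tensor factor of $\mathcal Z$ is needed.

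The main obstacle is the construction of the second tower, that is, covering the residual fixed-point part \emph{exactly} (in norm, as in \autoref{rmk:EquivalenceDimRok}) rather than merely up to trace. At each finite stage the uncovered set is a single $G$-fixed vector, whose projection has norm one but trace tending to $0$; the crucial point is that, as a central sequence, this assembles to a $G$-fixed projection of trace zero in $D_\omega\cap D'$ which is nonetheless ``large'', so that the residual action on the corner it cuts out is again outer and admits a genuine Rokhlin cover. Making this cover equivariant and exactly summing to the identity — bridging the gap between the $W^*$ picture, where the fixed vector disappears and a single color would suffice, and the $C^*$ picture, where it does not — is the technical heart of the argument, and is exactly where the order-zero toolkit together with the $\sigma$-ideal correction does the work.
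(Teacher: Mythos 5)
Your lower bound is correct and is the same obstruction the paper invokes (the paper disposes of it in one line by noting that $[1_D]$ is not divisible by $|G|$ in $K_0(D)$; your mod-$p$ computation of the supernatural number of $\bigotimes_n M_{|G|^n+1}$ supplies the details). The gap is in the upper bound, precisely at the point you yourself identify as the technical heart — the second tower — where no working mechanism is actually given. If the first tower consists of honest projections summing to $1-q$, where $q=[(q_n)_{n\in\N}]$ is the residual $G$-fixed projection, then the second tower must consist of mutually orthogonal positive contractions summing \emph{exactly} to $q$; but orthogonal positive contractions summing to a projection are automatically projections (each $r_g=r_g\sum_h r_h=r_g^2$), so you would in fact need genuine Rokhlin projections for the restricted action on the corner $q(D_\omega\cap D')q$. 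The tools you propose cannot produce them: since $\tau(q_n)=1/(|G|^n+1)\to 0$, the projection $q$ lies in the trace-kernel ideal $J_D$ and is sent to $0$ under $\pi\colon D_\omega\cap D'\to\M^\omega\cap\M'$, so the $W^*$-Rokhlin property of $\mu_G$ is blind to the corner you need to cover, and the $\sigma$-ideal correction $e\mapsto (1-x)e(1-x)$ with $xc=c$ for $c\in C\cap J_D$ \emph{annihilates} anything supported under $q$ rather than organizing it. Worse, the corner $q_nDq_n\cong\bigotimes_{m\neq n}\B(\ell^2(G)^{\otimes m}\oplus\C)$ carries (up to conjugacy) the action $\alpha$ again, so the decomposition ``free part plus fixed residual'' reproduces the original problem inside the residual and cannot terminate after one more colour.

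What actually makes one extra colour suffice is an interlocking-staircase construction, which the paper carries out entirely inside a single finite-dimensional tensor factor $\B(\ell^2(G)^{\otimes m}\oplus\C)$: by Fell absorption, $\lambda^{\otimes m}\oplus 1\cong\bigoplus_{k=1}^{|G|^{m-1}}\lambda\oplus 1$, which is conjugate to $\diag(1,\widetilde{\lambda},1,\widetilde{\lambda},\ldots,\widetilde{\lambda},1)$ where $\lambda\cong 1\oplus\widetilde{\lambda}$. One then takes two exact, exactly equivariant systems of Rokhlin projections — one adapted to $1\oplus\widetilde{\lambda}\cong\lambda$, one to $\widetilde{\lambda}\oplus 1\cong\lambda$ — weights them by the complementary ramp functions $a_0(x)=x/(|G|^{m-1}+1)$ and $a_1=1-a_0$, and offsets the two towers by one diagonal block, so that equivariance and orthogonality hold exactly while the total sum differs from $1$ by at most $1/(|G|^{m-1}+1)$ in norm. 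Centrality is then obtained by pushing the construction into a far tensor factor, and Kirchberg's $\ep$-test finishes. None of the $W^*$-bundle, $\mu_G$-absorption, or order-zero gluing machinery is needed for this proposition; your proposal imports that machinery exactly where it cannot help.
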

\begin{proof}
Given $m\in\N$, set $D_m= \B(\ell^2(G)^{\otimes m}\oplus \C)$ and let $\alpha^{(m)}\colon G\to\Aut(D_m)$
be the action given by
$\alpha^{(m)}_g=\Ad(\lambda^{\otimes m}_g\oplus 1)$ for all $g\in G$.

\textbf{Claim:} \emph{Let $\ep>0$ and $n_0\in\N$. Then there exist $m\in\N$ with $m\geq n_0$ and positive contractions $f_g^{(j)}\in D_m$,
for $g\in G$ and $j=0,1$, satisfying
\be\item $\alpha^{(m)}_g(f_h^{(j)})=f_{gh}^{(j)}$ for all $g,h\in G$ and for all $j=0,1$;
\item $f_g^{(j)}f_h^{(j)}=0$ for all $g,h\in G$ with $g\neq h$ and for all $j=0,1$;
\item $\left\|1-\sum\limits_{j=0}^1\sum\limits_{g\in G}f^{(j)}_{g}\right\|<\ep$.
\ee}

The claim shows that there exist Rokhlin towers in $D_\infty$ satisfying conditions (a), (b) and (c) in \autoref{rmk:EquivalenceDimRok}
for $d=1$. We now explain how to find new towers satisfying these conditions in addition to condition (d), and then prove the claim.

Let $F\subseteq D$ be a finite set. For the $\ep>0$ given above, find $n_0\in\N$ such that for every $n\geq n_0$ there exists
a unital equivariant embedding $\varphi_n\colon (D_n,\alpha^{(n)})\hookrightarrow (D_\infty,\alpha_\infty)$ satisfying
\[\|\varphi_n(x)a-a\varphi_n(a)\|\leq \frac{\ep}{2} \|x\|\]
for all $x\in D_n$ and all $a\in F$. Use the claim to find $m\in\N$ with $m\geq n_0$ and positive contractions $f_g^{(j)}\in D_m$,
for $g\in G$ and $j=0,1$, satisfying conditions (1), (2) and (3) above for the tolerance $\ep/2$.
One checks that the positive contractions $\varphi_m(f_g^{(j)})\in D$,
for $g\in G$ and $j=0,1$, satisfy conditions (a) through (d) in \autoref{rmk:EquivalenceDimRok}, as desired.

We proceed to prove the claim.
By Fell's absorption principle, if $\pi\colon G\to \U(\Hi)$ is any finite dimensional representation of $G$ on a Hilbert space
$\Hi$, then $\lambda\otimes\pi$ is unitarily equivalent to a direct sum of $\dim(\Hi)$ copies of $\lambda$.
By writing $\lambda^{\otimes m}$ as a direct sum of $|G|^{m-1}$ copies of $\lambda$, it follows that
$\lambda^{\otimes m}\oplus 1$ is unitarily equivalent to $\bigoplus_{k=1}^{|G|^{m-1}} \lambda\oplus 1$.
We fix such an identification for the remainder of the proof.
For the $\ep>0$ given, choose $m\in\N$ such that $|G|^{m-2}>2/\ep$ and also $m\geq n_0$.

Using that $\lambda$ contains a copy of the trivial representation, 
let $\widetilde{\lambda}\colon G\to \mathcal{U}(V)$ 
be any unitary representation satisfying 
$\lambda \cong 1 \oplus \widetilde{\lambda}$. Then 
$\lambda^{\otimes m}\oplus 1$ is unitarily conjugate to the diagonal
representation
$\diag(1,\widetilde{\lambda}_g,1,\widetilde{\lambda}_g,1 \ldots,\widetilde{\lambda}_g,1)$, where the trivial representation appears $|G|^{m-1}+1$ times and $\widetilde{\lambda}$ appears $|G|^{m-1}$ times.

Let $p_g\in \B(\C\oplus V)$, for $g\in G$,
be projections satisfying 
$\sum_{g \in G} p_g = 1$ and $\Ad (1 \oplus \widetilde{\lambda}_g)(p_h) = p_{gh}$ 
for all $g,h \in G$. Similarly, let $q_g\in \B(V\oplus \C)$ be 
projections satisfying $\sum_{g \in G} q_g = 1$ and $\Ad (\widetilde{\lambda}_g \oplus 1)(q_h) = q_{gh}$ for all $g,h \in G$. 
(Since $1\oplus \widetilde{\lambda}\cong \widetilde{\lambda}\oplus 1\cong\lambda$, one can construct the projections $p_g$ and $q_g$ by taking 
suitable unitary conjugates of the projection $e_g\in \B(\ell^2(G))$
onto the span of $\delta_g$.)

Let $a_0\colon [0,|G|^{m-1}]\to [0,1]$ be given by $a_0(x)=\frac{x}{|G|^{m-1}+1}$ for all $x\in [0,|G|^{m-1}]$, and set $a_1=1-a_0$.
For $g\in G$, set
\[f_g^{(0)}=\diag(0,a_0(1)q_g,\ldots,a_0(|G|^{m-1})q_g)\]
and
\[f_g^{(1)}=\diag(a_1(1)p_g,\ldots,a_1(|G|^{m-1})p_g),0) .\]
These are positive contractions satisfying conditions (1), (2) and (3) above, thus showing
that $\dimRok(\alpha)\leq 1$.

Finally, $\dimRok(\alpha)=0$ is impossible because the unit of $D$ is not divisible by $|G|$ in $K_0(D)$.
\end{proof}

Using the computation above, we will show that certain canonical actions on dimension drop algebras admit
Rokhlin towers that satisfy all the conditions in \autoref{df:Rdim} except for centrality; see \autoref{prop:GactIkdim2}.
We retain the notation from \autoref{nota:DimDrop}.

\begin{df}\label{df:ActonIk}
Let $G$ be a finite group, and let $k\in\N$. We denote by
$I^{(k)}_G$ the dimension drop algebra
\[ I_{G}^{(k)}=\left\{f\in C\left([0,1], \B(\ell^2(G)^{\otimes k}) \otimes \B(\ell^2(G)^{\otimes k}\oplus \C)\right)\colon
\begin{tabular}{@{}l@{}}
    $f(0)\in \B(\ell^2(G)^{\otimes k})\otimes 1,$\\
    $f(1)\in 1\otimes \B(\ell^2(G)^{\otimes k}\oplus \C)$
   \end{tabular}
\right\}\]
and we denote by $\mu^{(k)}_G\colon G\to \Aut(I^{(k)}_{G})$ the action $\gamma_{\lambda^{\otimes k},\lambda^{\otimes k}\oplus 1}$.
\end{df}

Next, we give a recipe for constructing unital equivariant homomorphisms from $(I^{(k)}_{G},\mu^{(k)}_G)$.
We do so in a generality greater than necessary, because the proof is not more complicated and in fact the
higher level of abstraction makes the argument conceptually clearer.

\begin{rem}
\label{rem:generators-Mn}
Let $n\in\N$. Recall that $M_n$ is the universal $C^*$-algebra generated by
elements $\{e_{1,k}\}_{k=1}^n$ satisfying $e_{1,k}e_{1,j}^* = \delta_{k,j}e_{1,1}$. Note that
$\Hi_n=\textrm{span} \{e_{1,k}\}_{k=1}^n$ is an $n$-dimensional Hilbert space with inner
product given by $\langle a,b \rangle e_{1,1} = ab^*$ for all $a,b\in\Hi_n$. Moreover, any orthonormal basis of $\Hi_n$
which contains $e_{1,1}$ is a set of generators satisfying the same relations.
In particular, any unitary operator on $\Hi_n$ which
fixes $e_{1,1}$ extends to an automorphism of $M_n$ which fixes the projection $e_{1,1}$.
\end{rem}

The following is an equivariant version of a well-known characterization of the dimension
drop algebra; see, for example, \cite{KirRor_central_2014}. We note, however, that our proof
is new even in the non-equivariant setting.

\begin{thm}\label{thm:univprop}
Let $G$ be a finite group,
let $B$ be a unital \ca, and let $\beta\colon G\to\Aut(B)$ be an action.
Let $n \in \N$, let $v \colon G \to U(n)$ be a unitary representation containing the trivial representation, and
let $e_{1,1}$ denote any rank-one $G$-invariant projection in $M_n$.
Suppose that there exist a completely positive contractive equivariant order zero map
\[\xi\colon \left(M_n,\Ad(v) \right) \to (B,\beta)\]
and a contraction $s\in B^\beta$ satisfying $\xi(e_{11})s=s$ and $\xi(1)+s^*s=1$.

Let $\gamma$ be the restriction to $I_{n,n+1}$ of the action of $G$ on $C([0,1]) \otimes M_n \otimes M_{n+1}$ given by
\[
\id_{C([0,1])} \otimes \Ad(v) \otimes \Ad(v \oplus 1_{\C}).
\]

Then $\xi$ can be extended to a unital, equivariant homomorphism
\[
\pi\colon (I_{n,n+1},\gamma)\to (B,\beta).
\]
\end{thm}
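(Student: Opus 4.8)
The strategy I would follow is to reduce the statement to the universal description of the dimension drop algebra in \autoref{prop:UnivPropDimDropUV}, so that producing $\pi$ amounts to producing, inside $(B,\beta)$, two equivariant completely positive contractive order zero maps with commuting ranges whose unit values add up to $1$, one attached to each tensor factor of $I_{n,n+1}$. In this picture the given map $\xi$ is not literally the $M_n$-factor map; rather, writing $\kappa\colon(M_n,\Ad(v))\to(M_{n+1},\Ad(v\oplus 1_\C))$ for the canonical $G$-equivariant corner embedding induced by the inclusion of $v$ as the first summand of $v\oplus 1_\C$, the assertion that $\pi$ \emph{extends} $\xi$ should be read as $\pi\circ\iota=\xi$, where $\iota\colon M_n\to I_{n,n+1}$ is the equivariant order zero map into the $M_{n+1}$-endpoint precomposed with $\kappa$. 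The content of the theorem is then that $s$ supplies exactly the datum missing for the $M_{n+1}$-endpoint: the relation $s^*s=1-\xi(1)$ furnishes the complementary unit, while $\xi(e_{11})s=s$ (which forces $\xi(1)s=s$) anchors $s$ in the $e_{11}$-corner of $\xi$, precisely where the extra $(n+1)$-st basis direction of $M_{n+1}$ must be grafted on.

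Accordingly, the bulk of the argument is the construction of an equivariant order zero map $\eta\colon(M_{n+1},\Ad(v\oplus 1_\C))\to(B,\beta)$ built from $\xi$ and $s$ and restricting along $\kappa$ to $\xi$. Here I would exploit the frame description of matrix algebras from \autoref{rem:generators-Mn}: since $v$ contains the trivial representation with invariant projection $e_{11}$, and $v\oplus 1_\C$ contains an \emph{additional} trivial summand spanned by the new basis vector $\epsilon_{n+1}$, both the $e_{11}$-direction and the $\epsilon_{n+1}$-direction are $G$-fixed, so $s\in B^\beta$, anchored at $e_{11}$, is exactly the equivariant partial-isometry datum connecting them. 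Concretely I would use the elements $\xi(e_{i1})s$ for $1\le i\le n$ to describe how the first $n$ basis directions couple to the new one, together with a functional-calculus element of $1-\xi(1)$ for the new direction itself, and then invoke the order-zero functional calculus of $\xi$ to interpolate the ``weight'' carried by $\xi(1)$ from the $M_n$-endpoint across $[0,1]$. Equivariance of $\eta$ would be inherited from equivariance of $\xi$ and $G$-invariance of $s$, using that $e_{11}$ and the adjoined trivial summand are fixed by $\Ad(v\oplus 1_\C)$. With $\eta$ in hand, the complementary order zero map on $M_n$ built from $1-\eta(1)$ has range commuting with that of $\eta$, and \autoref{prop:UnivPropDimDropUV} (together with \autoref{prop:EqOzCommRges} for amalgamating commuting order zero pieces where needed) yields the unital equivariant $\pi$; unwinding the correspondence gives $\pi\circ\iota=\xi$.

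The main obstacle, and the place where the hypotheses on $s$ are genuinely used, is verifying that these assembled pieces define a \emph{single} equivariant order zero map on all of $M_{n+1}$ with the correct unit and commuting ranges. Because $\xi$ is only order zero and not multiplicative, one cannot amalgamate the $\xi$-matrix units with the $s$-direction into honest matrix units at a point; this is precisely why a dimension drop algebra, rather than a matrix algebra, is the natural target. The two relations $\xi(e_{11})s=s$ and $\xi(1)+s^*s=1$ must be used repeatedly to control all cross terms, notably to check the order-zero identity $\psi(1)\psi(a^*a)=\psi(a)^*\psi(a)$ recalled after \autoref{prop:EqOzCommRges}, and to see that the new direction is orthogonal to the range of $\xi$. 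I expect the equivariant bookkeeping to be routine once the non-equivariant order-zero relations are verified, since all auxiliary data ($e_{11}$, the new summand, and $s$) are $G$-fixed; the delicate point is the purely order-zero algebra relating $\xi$ and $s$.
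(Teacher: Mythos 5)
Your plan has a genuine gap at its central step, and the paper's proof in fact does not go through \autoref{prop:UnivPropDimDropUV} at all. The construction you propose---an equivariant completely positive contractive order zero map $\eta\colon M_{n+1}\to B$ with $\eta\circ\kappa=\xi$ along the corner embedding $\kappa\colon M_n\to M_{n+1}$---is impossible in general, already without the group. Test it on the tautological instance $B=I_{n,n+1}$, $\pi=\mathrm{id}$, where $\xi(e_{j,k})=\tilde f_{j,k}$ is the element whose $(j,k)$ block is $\diag(1,\dots,1,1-\rho)$ and $s=\tilde s$ (the paper's generators). Evaluating a hypothetical $\eta$ at $t\in(0,1)$ gives an order zero map $\eta_t=h_t\sigma_t\colon M_{n+1}\to M_n\otimes M_{n+1}\cong M_{n(n+1)}$ with $\sigma_t$ a homomorphism; the $n+1$ mutually orthogonal, pairwise equivalent projections $\sigma_t(E_{j,j})$ each have rank at most $n$, so $\eta_t(E_{j,j})$ has support of rank at most $n$, whereas $\tilde f_{j,j}(t)=e_{j,j}\otimes\diag(1,\dots,1,1-t)$ has support of rank $n+1$. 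So no such $\eta$ exists: the direction carried by $s$ cannot be grafted onto $\xi$ at the level of a single order zero map on $M_{n+1}$, which is exactly why the natural target is a dimension drop algebra rather than a matrix algebra. Relatedly, your expectation that the new direction is orthogonal to the range of $\xi$ fails: in the same model $s^*s=1-\xi(1)=\rho\,(1\otimes e_{n+1,n+1})$ and $\xi(e_{1,1})\,s^*s=\rho(1-\rho)\,e_{1,1}\otimes e_{n+1,n+1}\neq0$.

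Even granting some $\eta$, the ``complementary order zero map on $M_n$ built from $1-\eta(1)$'' is not a construction: a single positive element does not determine where the off-diagonal matrix units go, and producing a full commuting pair of $M_n$- and $M_{n+1}$-type systems out of $(\xi,s)$ is the actual content of the theorem. In the paper this requires the specific elements $\frac{1}{t-t^2}\,s\,\xi(e_{1,j})\,s^*\,\xi(e_{1,k})$ and $\frac{1}{\sqrt{t}(1-t)}\,s\,\xi(e_{1,l})$, made sense of globally via the central positive contraction $b=s^*s+\sum_jf_{j,1}ss^*f_{1,j}$. Concretely, the paper presents $I_{n,n+1}$ as the universal $C^*$-algebra on generators $f_{j,k}$, $s$ subject to precisely the relations your data satisfy ($f_{j,k}f_{l,m}=\delta_{k,l}f_{j,j}f_{j,m}$, $f_{1,1}s=s$, $\sum_jf_{j,j}+s^*s=1$), proves faithfulness of this presentation by showing $b$ is central and identifying all fibers over $[0,1]$, and only then checks equivariance fiberwise using $G$-invariance of $\pi(b)$. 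Finally, your reading of ``extends'' is off: $\xi$ corresponds to $e_{j,k}\mapsto e_{j,k}\otimes(1_{n+1}-\rho\, e_{n+1,n+1})$, not to the $M_{n+1}$-endpoint map precomposed with $\kappa$; under your reading the relation $\xi(e_{1,1})s=s$ would force $s$ to vanish at the endpoint where $1-\xi(1)=1$, so the hypotheses could never be met in the model case.
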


\begin{proof}
Denote by $D$ the universal \ca~generated by a set $\{s,f_{j,k}\colon j,k=1,\ldots,n\}$ of contractions satisfying:
\begin{enumerate}
\item $f_{j,k}^* = f_{k,j}$ for all $j,k=1,\ldots,n$.
\item $f_{j,k}f_{l,m} = \delta_{k,l}f_{j,j}f_{j,m}$ for all $j,k,l,m=1,\ldots,n$.
\item $f_{1,1}s = s$.
\item $\sum_{j=1}^n f_{j,j} + s^*s = 1$.
\end{enumerate}

We claim that $D$ is isomorphic to the dimension drop algebra $I_{n,n+1}$.
We begin by noting that $I_{n,n+1}\subseteq C([0,1],M_{n(n+1)})$ is isomorphic to the subalgebra of
$C([0,1],M_{n(n+1)})$ of the continuous functions $f$ such that $f(0)$ can be written as an $n \times n$
matrix in block form 
\[
\left ( \begin{matrix}
	z_{11} 1_{M_{n+1}} & z_{12}  1_{M_{n+1}} & \cdots & z_{1n}  1_{M_{n+1}} \\
	z_{21} 1_{M_{n+1}} & z_{22}  1_{M_{n+1}} & \cdots & z_{2n}  1_{M_{n+1}} \\	
	\vdots				&	\vdots			& \ddots & \vdots \\
	z_{n1} 1_{M_{n+1}} & z_{n2}  1_{M_{n+1}} & \cdots & z_{nn}  1_{M_{n+1}} \\
\end{matrix} \right )
\]
(where $z_{ij}$ are scalars for $i,j \in \{1,2,\ldots,n\}$)
 and such that $f(1)$ is in
the subalgebra of $M_{n(n+1)}$ isomorphic to $M_{n+1}$ which
is generated by the elements $F_{1,1}, \ldots, F_{1,n}$ and $F_{1,n+1}$ given as follows
(each vertical line represents $n+1$ entries, and the horizontal line appears after $n$ rows):
\[
\begin{matrix}
 F_{1,1} =	\left(\begin{array}{@{}ccccc|ccccc|c|ccccc@{}}
    1 & 0 & \cdots & 0 & 0  & 0 & 0 & \cdots & 0 & 0 			    & \cdots &  0 & 0 & \cdots & 0 & 0 \\
    0 & 1 & \cdots & 0 & 0 			  & 0 & 0 & \cdots & 0 & 0 	& \cdots &  0 & 0 & \cdots & 0 & 0 \\
    \vdots & \vdots &  \vdots &  \vdots & \vdots			  &  \vdots &  \vdots &  \vdots &  \vdots &  \vdots  &  \vdots &   \vdots &  \vdots & \vdots &  \vdots & \vdots \\
    0 & 0 & \cdots & 1 & 0			  & 0 & 0 & \cdots & 0 & 0  & \cdots &  0 & 0 & \cdots & 0 & 0 \\\hline
    0 & 0 & \cdots & 0 &0			  & 0 & 0 & \cdots & 0 & 0  & \cdots &  0 & 0 & \cdots & 0 & 0 \\
     \vdots & \vdots &  \vdots &  \vdots & \vdots			  &  \vdots &  \vdots &  \vdots &  \vdots &  \vdots  &  \vdots &   \vdots &  \vdots & \vdots &  \vdots & \vdots \\
       0 & 0 & \cdots & 0 &0			  & 0 & 0 & \cdots & 0 & 0  & \cdots &  0 & 0 & \cdots & 0 & 0 \\
  \end{array}\right)\end{matrix} \]

\[
    \vdots
\]
\[  \begin{matrix} F_{1,n}= 	\left(\begin{array}{@{}ccccc|ccccc|c|ccccc@{}}
        0 & 0 & \cdots & 0 & 0  & 0 & 0 & \cdots & 0 & 0 			    & \cdots &  1 & 0 & \cdots & 0 & 0 \\
        0 & 0 & \cdots & 0 & 0 			  & 0 & 0 & \cdots & 0 & 0 	& \cdots &  0 & 1 & \cdots & 0 & 0 \\
        \vdots & \vdots &  \vdots &  \vdots & \vdots			  &  \vdots &  \vdots &  \vdots &  \vdots &  \vdots  &  \vdots &   \vdots &  \vdots & \vdots &  \vdots & \vdots \\
        0 & 0 & \cdots & 0 & 0			  & 0 & 0 & \cdots & 0 & 0  & \cdots &  0 & 0 & \cdots & 1 & 0\\\hline
        0 & 0 & \cdots & 0 &0			  & 0 & 0 & \cdots & 0 & 0  & \cdots &  0 & 0 & \cdots & 0 & 0 \\
         \vdots & \vdots &  \vdots &  \vdots & \vdots			  &  \vdots &  \vdots &  \vdots &  \vdots &  \vdots  &  \vdots &   \vdots &  \vdots & \vdots &  \vdots & \vdots \\
           0 & 0 & \cdots & 0 &0			  & 0 & 0 & \cdots & 0 & 0  & \cdots &  0 & 0 & \cdots & 0 & 0 \\
      \end{array}\right) \end{matrix}
\]
\[
     \begin{matrix}F_{1,n+1} = 	\left(\begin{array}{@{}ccccc|ccccc|c|ccccc@{}}
          0 & 0 & \cdots & 0 & 1  & 0 & 0 & \cdots & 0 & 0 			    & \cdots &  0 & 0 & \cdots & 0 & 0 \\
          0 & 0 & \cdots & 0 & 0 			  & 0 & 0 & \cdots & 0 & 1 	& \cdots &  0 & 0 & \cdots & 0 & 0 \\
          \vdots & \vdots &  \vdots &  \vdots & \vdots			  &  \vdots &  \vdots &  \vdots &  \vdots &  \vdots  &  \vdots &   \vdots &  \vdots & \vdots &  \vdots & \vdots \\
          0 & 0 & \cdots & 0 & 0			  & 0 & 0 & \cdots & 0 & 0  & \cdots &  0 & 0 & \cdots & 0 & 1 \\\hline
          0 & 0 & \cdots & 0 &0			  & 0 & 0 & \cdots & 0 & 0  & \cdots &  0 & 0 & \cdots & 0 & 0 \\
           \vdots & \vdots &  \vdots &  \vdots & \vdots			  &  \vdots &  \vdots &  \vdots &  \vdots &  \vdots  &  \vdots &   \vdots &  \vdots & \vdots &  \vdots & \vdots \\
             0 & 0 & \cdots & 0 &0			  & 0 & 0 & \cdots & 0 & 0  & \cdots &  0 & 0 & \cdots & 0 & 0 \\
        \end{array}\right)
\end{matrix}
\]
(One checks that the elements $F_{1,j}$, for $j=1,2,\ldots,n+1$, are contractions, satisfy $F_{1,j}F_{i,i} = 0$ for all $i,j \in \{1,2,\ldots,n\}$ with $j\neq 1$,
$F_{1,j}F_{1,i}^* = \delta_{i,j}F_{1,1}$ and $\sum_{j=1}^{n+1} F_{1,j}^*F_{1,j} = 1$. Therefore they generate a unital copy of $M_{n+1}$.)

Denote by $\rho \colon [0,1] \to [0,1]$ the identity function.
For $j,k=1,\ldots,n$, let $\tilde{f}_{j,k} \in I_{n,n+1}$ be the matrix-valued function which,
written in $n \times n$ block form, has as its $(j,k)$-th block the diagonal matrix valued function
$\diag(\underbrace{1,1,\ldots,1}_{n \textrm{ times}},1-\rho)$, and $0$ elsewhere. Let $\tilde{s}$ be the matrix-valued function
\[
  \tilde{s} = \left(\begin{array}{@{}ccccc|ccccc|c|ccccc@{}}
    0 & 0 & \cdots & 0 & \sqrt{\rho}  & 0 & 0 & \cdots & 0 & 0 			    & \cdots &  0 & 0 & \cdots & 0 & 0 \\
    0 & 0 & \cdots & 0 & 0 			  & 0 & 0 & \cdots & 0 & \sqrt{\rho} 	& \cdots &  0 & 0 & \cdots & 0 & 0 \\
    \vdots & \vdots &  \vdots &  \vdots & \vdots			  &  \vdots &  \vdots &  \vdots &  \vdots &  \vdots  &  \vdots &   \vdots &  \vdots & \vdots &  \vdots & \vdots \\
    0 & 0 & \cdots & 0 & 0			  & 0 & 0 & \cdots & 0 & 0  & \cdots &  0 & 0 & \cdots & 0 & \sqrt{\rho} \\\hline
    0 & 0 & \cdots & 0 &0			  & 0 & 0 & \cdots & 0 & 0  & \cdots &  0 & 0 & \cdots & 0 & 0 \\
     \vdots & \vdots &  \vdots &  \vdots & \vdots			  &  \vdots &  \vdots &  \vdots &  \vdots &  \vdots  &  \vdots &   \vdots &  \vdots & \vdots &  \vdots & \vdots \\
       0 & 0 & \cdots & 0 &0			  & 0 & 0 & \cdots & 0 & 0  & \cdots &  0 & 0 & \cdots & 0 & 0 \\
  \end{array}\right),
\]
where each vertical dividing line represents $n+1$ entries.

One checks that the functions $\tilde{f}_{j,k}$ for $j,k=1,\ldots,n$ and $\tilde{s}$ satisfy the relations defining $D$ and generate $I_{n,n+1}$.
Fix a surjection $\kappa \colon D \to I_{n,n+1}$ satisfying $\kappa(f_{j,k})=\tilde{f}_{j,k}$ for all $j,k=1,\ldots,n$ and $\kappa(s)=\tilde{s}$.
It remains to show that $\kappa$ is injective.
\newline

\textbf{Claim 1:} \emph{the following identities hold:
\be
\item  $f_{j,j}f_{j,k} = f_{j,k}f_{k,k}$ for all $j,k =1,\ldots,n$.
\item For any $j =1,\ldots,n$, we have $s f_{j,1} s = 0$.
\ee}

The first of these is readily checked. The following computation establishes the second identity,
where we use that $f_{1,1}s = s$ repeatedly:
\begin{align*}
(s f_{j,1} s)^*(s f_{j,1} s) &=
s^*f_{1,j}s^*sf_{j,1}s  \\
&=   s^*f_{1,j}\left (1 - \sum_{i=1}^n f_{i,i} \right )f_{j,1}s \\
&= s^* f_{1,j}f_{j,1}s -  s^* f_{1,j}f_{j,j}f_{j,1}s \\
&= s^*f_{1,1}^2s - s^*f_{1,1}^3s = 0.
\end{align*}
This proves the claim.

Set
\[
b = s^*s + \sum_{j=1}^n f_{j,1}ss^*f_{1,j} = s^*s + ss^* + \sum_{j=2}^n f_{j,1}ss^*f_{1,j}.
\]
Note that $b$ is a positive contraction (as can be seen using that all of the summands in its definition are pairwise orthogonal).
One checks that $\kappa(b) = \rho \cdot 1$, and therefore $\spec(b) = [0,1]$.
\newline

\textbf{Claim 2:} \emph{$b$ belongs to the center of $D$.}
Let $j,k=1,\ldots,n$.
Then
\[
f_{j,k}b = f_{j,k}s^*s + f_{j,j}f_{j,1}ss^*f_{1,k} \ \mbox{ and } \
bf_{j,k} = s^*sf_{j,k} +f_{j,1}ss^*f_{1,k} f_{k,k}.
\]
We show term by term that both expressions agree, which will imply the claim.
For the first terms in both right hand sides, we have
\begin{align*}
f_{j,k}s^*s = f_{j,k} \left (1 -\sum_{i=1}^n f_{i,i} \right) &= f_{j,k} - f_{j,k}f_{k,k} =  f_{j,k} - f_{j,j}f_{j,k} \\
&= \left (1 -\sum_{i=1}^n f_{i,i} \right ) f_{j,k} =  s^*sf_{j,k}.
\end{align*}
For the second terms in both right hand sides, we have
\begin{align*}
f_{j,j}f_{j,1}ss^*f_{1,k} &= f_{j,1} (f_{k,k}s) s^*f_{1,k} = f_{j,1}ss^*f_{1,k} \\
&= f_{j,1}s(f_{1,1}s)^*f_{1,k} \\
&= f_{j,1}ss^*f_{1,k} f_{k,k}.
\end{align*}

Likewise, using the fact that $s f_{i,1} s = 0$ for all $i=1,\ldots,n$, we get
\[
sb = ss^*s + s\sum_{i=1}^n f_{i,1}ss^*f_{1,i}  = ss^*s
\]
and
\[
bs = s^*s^2 + \sum_{i=1}^n f_{i,1}ss^*f_{1,i}s = s^*sf_{1,1}s + f_{1,1}ss^*f_{1,1}s = 0 + ss^*s = sb.
\]
This completes the proof of the claim.

It follows that $b$ endows $D$ with a $C([0,1])$-algebra structure, and the quotient map $\kappa$ is a $C([0,1])$-algebra homomorphism.
For $t \in [0,1]$, denote by $D(t)$ and by $I_{n,n+1}(t)$ the corresponding fibers, and by
$\kappa_t \colon D(t) \to I_{n,n+1}(t)$ the corresponding surjection. It suffices now to show that $\kappa_t$ is injective for all $t$. Indeed, if
$a\in \ker(\kappa)$ is nonzero, then there exists $t\in [0,1]$ 
such that $a_t$ is nonzero, and clearly $a_t\in \ker(\kappa_t)$. 
We thus fix from now on $t\in [0,1]$. 

The fiber $I_{n,n+1}(t)$ is isomorphic to $M_n$ if $t = 0$, to $M_{n+1}$ if $t = 1$, and to $M_n \otimes M_{n+1}$ otherwise.
It therefore suffices to show that the fibers $D(t)$ are also isomorphic to those corresponding matrix algebras.
Denote by $f_{j,k}(t)$, $s(t)$ and $b(t)$ the images of the elements $f_{j,k}$, $s$ and $b$ in the fiber $D(t)$.

\underline{Case I - $t=0$:} Here $b(0) = 0$. In particular, as $b(0) \geq s^*(0) s(0)$, it follows that $s(0) = 0$.
Therefore, $\{f_{j,k}(0)\}_{j,k=1}^n$ generates $D(0)$, and these are precisely the matrix units of $M_n$. Thus $D(0) \cong M_n$.

\underline{Case II - $t = 1$:} For $k,l=1,\ldots,n$ we set $E_{j,k} = f_{j,1}(1)s(1)s^*(1)f_{1,k}(1)$. We also
set
\[E_{1,n+1} = s(1), \ E_{n+1,1} = s^*(1), \ \mbox{ and } \ E_{j,k} = E_{j,1}E_{1,k}\]
for all remaining cases with $k,l=1,\ldots,n+1$. Using that $b(1)=1$,
one now checks that $\{E_{j,k}\}_{k,l =1}^{n+1}$ are matrix units which generate $D(1)$, which is therefore isomorphic to $M_{n+1}$.

\underline{Case III - $t \in (0,1)$:} Here $b(t) = t1$. In order to lighten the notation, we write the generators
as in \autoref{rem:generators-Mn}. For $j,k,l=1,\ldots,n$, set
\[c_{j,k} = \frac{1}{t-t^2} s(t)f_{1,j}(t)s^*(t)f_{1,k}(t)\ \mbox{ and } \ d_{l} = \frac{1}{\sqrt{t}(1-t)} s(t)f_{1,l}(t).\]
We verify that these elements satisfy the conditions from \autoref{rem:generators-Mn}. To that end, note first that $s^2=0$, and that
\begin{align*}
t s(t)f_{1,1}(t)^2 s^*(t)
&= s(t)f_{1,1}(t) \cdot t1 \cdot f_{1,1}(t) s^*(t) \\
&=	s(t)f_{1,1}(t) \left ( s^*(t)s(t) + \sum_{j=1}^n f_{j,1}(t)s(t)s^*(t)f_{1,j}(t) \right ) f_{1,1}(t)s^*(t) \\
&= 	s(t)f_{1,1}(t)s^*(t)s(t)f_{1,1}(t)s^*(t) \\
& \ \ \ \ + s(t)f_{1,1}(t)^2s(t)s^*(t)f_{1,1}(t)^2s^*(t) \\
&= s(t)f_{1,1}(t)s^*(t)s(t)f_{1,1}(t)s^*(t) +s(t)^2s^*(t)f_{1,1}(t)^2s^*(t) \\
&= s(t)f_{1,1}(t)s^*(t)s(t)f_{1,1}(t)s^*(t) + 0 \\
&= s(t)f_{1,1}(t) \left ( 1-\sum_{j=1}^nf_{jj}(t)  \right ) f_{1,1}(t)s^*(t)  \\
&= s(t)\left (f_{1,1}(t)^2 - f_{1,1}(t)^3 \right ) s^*(t).
\end{align*}
Likewise, we see that
$$
t s(t)f_{1,1}(t)s^*(t) = s(t) \left (f_{1,1}(t) - f_{1,1}(t)^2 \right ) s^*(t).
$$
Thus
$$
s(t)f_{1,1}^2(t)s^*(t) = (1-t)s(t)f_{1,1}(t)s^*(t)
$$
and therefore
\begin{align*}
s(t)\left (f_{1,1}(t)^2 - f_{1,1}(t)^3 \right ) s^*(t) &= t(1-t)s(t)f_{1,1}(t)s^*(t) \\
&= t(1-t)s(t)f_{1,1}(t)s^*(t)f_{1,1}(t).
\end{align*}
We can now verify that the elements $c_{j,k}$ and $d_{l}$ from above satisfy the conditions from \autoref{rem:generators-Mn}.
For any quadruple $j,k,l,m=1,\ldots,n$ we have:
\begin{align*}
c_{j,k}c_{l,m}^* &= \left ( \frac{1}{t-t^2} \right )^2 s(t)f_{1,j}(t)s^*(t)f_{1,k}(t) \cdot f_{m,1}(t)s(t)f_{l,1}(t) s^*(t) \\
&= \delta_{k,m} \left ( \frac{1}{t-t^2} \right )^2 s(t)f_{1,j}(t)s^*(t) \left (f_{1,1}(t)^2 s(t) \right )f_{l,1}(t) s^*(t) \\
&= \delta_{k,m} \left ( \frac{1}{t-t^2} \right )^2 s(t)f_{1,j}(t)s^*(t) s(t)f_{l,1}(t) s^*(t) \\
&= \delta_{k,m} \left ( \frac{1}{t-t^2} \right )^2 s(t)f_{1,j}(t)\left ( 1-\sum_{i=1}^nf_{i,i}(t)  \right )f_{l,1}(t) s^*(t) \\
&= \delta_{k,m} \delta_{j,l}  \left ( \frac{1}{t-t^2} \right )^2 s(t)\left ( f_{1,1}(t)^2 -  f_{1,1}(t)^3 \right ) s^*(t) \\
&=  \delta_{k,m} \delta_{j,l} \left ( \frac{1}{t-t^2} \right )^2 \cdot  t(1-t)s(t)f_{1,1}(t)s^*(t)f_{1,1}(t) =  \delta_{k,m} \delta_{j,l} c_{1,1}
\end{align*}
For $j,k=1,\ldots,n$, we have:
\begin{align*}
d_jd_k^* &= \left (\frac{1}{\sqrt{t}(1-t)} \right )^2 s(t)f_{1,j}(t) \cdot  f_{k1}(t) s(t)^* \\
&= \delta_{j,k} \frac{1}{t(1-t)^2} s(t)f_{1,1}(t)^2  s(t)^*  \\
&= \delta_{j,k}  \frac{1}{t(1-t)^2} \cdot (1-t)s(t)f_{1,1}(t)s^*(t) = \delta_{j,k} c_{1,1}
\end{align*}
Similarly, for $j,k,l=1,\ldots,n$ we have:
\begin{align*}
c_{j,k}d_l^* &=   \frac{1}{t^{3/2}(1-t)^2}  s(t)f_{1,j}(t)s^*(t)f_{1,k}(t) \cdot  f_{l,1}(t) s(t)^* \\
&= \delta_{kl}  \frac{1}{t^{3/2}(1-t)^2}  s(t)f_{1,j}(t)s^*(t)f_{1,1}(t)^2 s^*(t) \\
&=  \delta_{kl}  \frac{1}{t^{3/2}(1-t)^2}  s(t)f_{1,j}(t)(s^*(t))^2 = 0
\end{align*}
and likewise $d_lc_{j,k}^*=0$. By \autoref{rem:generators-Mn}, it follows that
$\{c_{j,k},d_l\colon j,k,l=1,\ldots,n\}$ generates $M_{n(n+1)}$.

\textbf{Claim 3:} \emph{
The set $\{c_{j,k},d_l\colon j,k,l=1,\ldots,n\}$ also generates $D(t)$.} It suffices to show that this family generates $s(t)$ and $\{f_{1,j}(t) \colon j=1,2,\ldots,n\}$. Indeed, 
\begin{align*}
\sqrt{t} \sum_{j=1}^n c_{j,1}^*d_j &= \frac{1}{t(1-t)^2} \sum_{j=1}^n sf_{j,1}(t)s(t)^*s(t)f_{1,j}(t) \\
&= \frac{1}{t(1-t)^2} \sum_{j=1}^n sf_{j,1}(t) \left ( 1- \sum_{m=1}^n f_{m,m}(t) \right ) f_{1,j}(t) \\
&= \frac{1}{t(1-t)^2} \sum_{j=1}^n s \left ( f_{j,j}(t)^2 -  f_{j,j}(t)^3 \right )  \\
&= \frac{1}{t(1-t)^2} \sum_{j=1}^n s \left ( (1-s(t)^*s(t))^2 - (1-s(t)^*s(t))^3 \right ) \\
&= \frac{1}{t(1-t)^2} \left ( (1-t)^2 - (1-t)^3 \right ) s(t) \\
&= s(t)
\end{align*}
Next, we show that
\[
f_{1,j}(t) = \sum_{k=1}^n c_{k,1}^*c_{k,j} + (1-t)d_1^*d_j
\]
To see this, recall that we saw that $sb = ss^*s = bs$, so $s(t)s(t)^*s(t) = ts(t)$. So,
\begin{align*}
\sum_{k=1}^n c_{k,1}^*c_{k,j} &= 
\frac{1}{t^2(1-t)^2} \sum_{k=1}^n s(t) f_{k,1}(t) s(t)^*s(t)f_{1,k}(t) s(t)^* f_{1,j}(t) \\
&= \frac{1}{t^2(1-t)^2} \sum_{k=1}^n s(t) f_{k,1}(t) \left ( 1- \sum_{m=1}^n f_{m,m}(t) \right ) f_{1,k}(t) s(t)^* f_{1,j}(t) \\
&= \frac{1}{t^2(1-t)^2} \sum_{k=1}^n s(t) ( f_{k,k}(t)^2 - f_{k,k}(t)^3 ) s(t)^* f_{1,j}(t) \\
&= \frac{1}{t^2(1-t)^2}s(t) ( (1-s(t)^*s(t))^2 - (1-s(t)^*s(t))^3 ) s(t)^* f_{1,j}(t) \\
&= \frac{1}{t^2(1-t)^2} \cdot t(1-t)^2 s(t) s(t)^* f_{1,j}(t) \\
&= \frac{1}{t}  s(t) s(t)^* f_{1,j}(t)
\end{align*}
and, using that $s^*s$ commutes with $f_{1,1}$ and that $f_{m,m} f_{1,j} = 0$ for $m \neq 1$,
\begin{align*} 
(1-t)d_1^*d_j &=  \frac{1}{t(1-t)} f_{1,1}(t)s^*(t)s(t)f_{1,j}(t) \\
&= \frac{1}{t(1-t)}s^*(t)s(t)  f_{1,1}(t) f_{1,j}(t) \\
&= \frac{1}{t(1-t)}s^*(t)s(t) \left ( \sum_{m=1}^n f_{m,m}(t) \right ) f_{1,j}(t) \\
&= \frac{1}{t(1-t)}s^*(t)s(t) \left (1 - s(t)^*s(t) \right ) f_{1,j}(t) \\
&= \frac{1}{t(1-t)} \cdot (1-t) s^*(t)s(t) f_{1,j}(t) \\
&= \frac{1}{t} s^*(t)s(t) f_{1,j}(t)
\end{align*}
Combining those observations, we get
\begin{align*}
\sum_{k=1}^n c_{k,1}^*c_{k,j} + (1-t)d_1^*d_j &= \frac{1}{t} \left ( s(t) s(t)^* +  s^*(t)s(t) \right ) f_{1,j}(t) \\
&=  \frac{1}{t} \left ( s(t) s(t)^* +  s^*(t)s(t) +  \sum_{m=2}^n f_{m,1}(t)s(t)s(t)^*f_{1,m}(t) \right ) f_{1,j}(t)\\
&= \frac{1}{t} b(t) f_{1,j}(t) \\
&=  f_{1,j}(t)
\end{align*}
This proves the claim.

It follows that the pair $(\xi,s)$ as in the statement gives rise to a unital homomorphism
$\pi\colon I_{n,n+1}\to B$; it remains to check that $\pi$ is equivariant.
To lighten the notation, we use the same letter $s$ to denote the given element in $B$ and the element $s$ in the universal \ca~ $I_{n,n+1}$.

Note that $\beta$ leaves $\pi(I_{n,n+1})$ invariant. Furthermore, using that $v_ge_{1,j} = e_{1,j}$ for $j=1,\ldots,n$ and for all $g\in G$, it follows that
\[
\beta_g(\pi(b)) = s^*s + ss^* + \sum_{j=2}^n \xi(v_ge_{j,1})ss^*\xi(e_{1,j}v_g^*)
\]
for all $g\in G$. Since $v_g$ is unitary and $\xi$ is linear, we deduce that $\beta_g(\pi(b)) = \pi(b)$ for all $g\in G$.
Thus, the restriction of $\beta$ to $\pi(I_{n,n+1})$ is an action via $C([0,1])$-automorphisms. It follows that it suffices to check equivariance on each fiber.

Define finite-dimensional Hilbert spaces $H_0, H_1$ and $H_2$ via
$H_0 = \textrm{span}\{\pi(c_{1,1})\}$,
\[H_1 = \textrm{span}\{\pi(c_{j,k})\colon j,k =1,\ldots,n\},\] 
and  
\[H_2 = \textrm{span}\{\pi(d_l)\colon l=1,\ldots,n\}.\]
Then $H_0$, $H_1$ and $H_2$ are invariant under $\beta$.
Set $E = \mathrm{span}\{e_{1,1},e_{1,2},\ldots,e_{1,n}\}$. Note that there are natural isomorphisms $H_1 \cong E \otimes E$ and $H_2 \cong E$, the first one given by
identifying $e_{1,j} \otimes e_{1,k}$ with $c_{j,k}$, and the second one given by identifying $e_{1,k}$ with $d_k$, for $j,k=1,\ldots,n$.
With these identifications, $\beta$ acts as $v_g \otimes v_g$ on $H_1$ while leaving $H_0$ fixed, and acts as $v_g$ on $H_2$.
Thus, the action induced by $\beta$ on the fiber corresponding to some $t\in (0,1)$ is conjugate to $\Ad (v \otimes (v \oplus 1_{\C}))$. The end-cases $t=0$ and $t=1$ are verified
similarly, thus concluding the proof.
\end{proof}

We will apply \autoref{thm:univprop} in the proof of \autoref{thm:dimRokatmost2}, at the end of this section,
to representations $v$ of the form $\lambda^{\otimes k}\colon G\to \U(\ell^2(G)^{\otimes k})$, for $k\in\N$.
Indeed, if $e\in \B(\ell^2(G))$ denotes the projection onto the constant functions, then $e^{\otimes k}$ is a $G$-invariant rank-one projection,
thus showing that $\lambda^{\otimes k}$ satisfies the assumption in \autoref{thm:univprop}.

\begin{prop}\label{prop:GactIkdim2}
Let $\ep>0$ and let $G$ be a finite group. Then there exist $k\in\N$ and positive contractions $f_g^{(j)}\in I_G^{(k)}$, for
$g\in G$ and $j=0,1,2$, satisfying
\be
\item[(a)] $\|\mu^{(k)}_{g}(f^{(j)}_{h})-f^{(j)}_{gh}\|<\ep$ for all $j=0,1,2$, and for all $g,h\in g$;
\item[(b)] $\|f^{(j)}_{g}f^{(j)}_{h}\|<\ep$ for all $j=0,1,2$, and for all $g,h\in G$ with $g\neq h$;
\item[(c)] $\left\|1-\sum\limits_{j=0}^2\sum\limits_{g\in G}f^{(j)}_{g}\right\|<\ep$.
\ee
\end{prop}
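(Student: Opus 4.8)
The plan is to build the three required towers by interpolating linearly across $[0,1]$ between two Rokhlin systems living on the two tensor factors: an \emph{exact} zero-dimensional system on the first factor $\B(\ell^2(G)^{\otimes k})$, which is admissible at the endpoint $t=0$, and an \emph{approximate} one-dimensional system on the second factor $\B(\ell^2(G)^{\otimes k}\oplus\C)$, which is admissible at $t=1$. Write $H=\ell^2(G)^{\otimes k}$ and $H'=\ell^2(G)^{\otimes k}\oplus\C$, so that $I_G^{(k)}\subseteq C([0,1],\B(H)\otimes\B(H'))$ with the dimension-drop boundary conditions $f(0)\in\B(H)\otimes 1$ and $f(1)\in 1\otimes\B(H')$. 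Since $\dimRok$ of the left fiber is $0$ and that of the right fiber is $1$, and $0+1+1=2$, this scheme should produce exactly three towers.

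First I would fix the data. Applying the Claim established inside the proof of \autoref{prop:DimRok1} (with the given $\ep$ and $n_0=1$), I obtain an integer $k\in\N$ and positive contractions $p_g^{(0)},p_g^{(1)}\in\B(H')$, for $g\in G$, which are \emph{exactly} equivariant for $\Ad(\lambda^{\otimes k}\oplus 1)$, that is $\Ad(\lambda^{\otimes k}\oplus 1)_g(p_h^{(i)})=p_{gh}^{(i)}$, are \emph{exactly} orthogonal within each tower, $p_g^{(i)}p_h^{(i)}=0$ for $g\neq h$, and satisfy $\bigl\|1-\sum_{i=0,1}\sum_{g\in G}p_g^{(i)}\bigr\|<\ep$. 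On the first factor, Fell's absorption principle gives $\lambda^{\otimes k}\cong\lambda\otimes 1_{|G|^{k-1}}$ on $\ell^2(G)\otimes\C^{|G|^{k-1}}$; under this identification I let $e_g\in\B(H)$ be the projection onto $\C\delta_g\otimes\C^{|G|^{k-1}}$, so that $\Ad(\lambda^{\otimes k})_g(e_h)=e_{gh}$, $e_ge_h=\delta_{g,h}e_g$, and $\sum_{g\in G}e_g=1$.

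Next I would define, for $t\in[0,1]$ and $g\in G$,
\[
f_g^{(0)}(t)=(1-t)(e_g\otimes 1),\qquad f_g^{(1)}(t)=t\,(1\otimes p_g^{(0)}),\qquad f_g^{(2)}(t)=t\,(1\otimes p_g^{(1)}).
\]
The weights $1-t$ and $t$ are chosen so that the boundary conditions hold: at $t=0$ the towers $j=1,2$ vanish while $f_g^{(0)}(0)=e_g\otimes 1\in\B(H)\otimes 1$, and at $t=1$ the tower $j=0$ vanishes while $f_g^{(1)}(1),f_g^{(2)}(1)\in 1\otimes\B(H')$; hence each $f_g^{(j)}$ is a positive contraction in $I_G^{(k)}$. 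Conditions (a) and (b) then hold \emph{exactly}: since $\mu^{(k)}_G$ is trivial on $C([0,1])$ and acts factorwise, $\mu^{(k)}_g(f_h^{(j)})=f_{gh}^{(j)}$ follows from the exact equivariance of the $e_g$ and the $p_g^{(i)}$, and within-tower orthogonality follows from $e_ge_h=0$ and $p_g^{(i)}p_h^{(i)}=0$ for $g\neq h$.

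For (c) I would compute
\[
\sum_{j=0}^2\sum_{g\in G}f_g^{(j)}(t)=(1-t)\,1+t\Bigl(1\otimes\sum_{i,g}p_g^{(i)}\Bigr),
\]
so that $1-\sum_{j,g}f_g^{(j)}(t)=t\,\bigl(1\otimes(1-\sum_{i,g}p_g^{(i)})\bigr)$, whose norm is at most $\ep$ uniformly in $t$. Because nearly every step is exact, there is no genuine obstacle here; the only point requiring care is the bookkeeping at the two endpoints of the dimension-drop algebra, namely verifying that the linear weights carry the first-factor tower into $\B(H)\otimes 1$ at $t=0$ and the second-factor towers into $1\otimes\B(H')$ at $t=1$. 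The conceptual content is simply that the increase in forced Rokhlin dimension from the left fiber ($|G|^k$-dimensional, permitting one exact tower) to the right fiber ($(|G|^k+1)$-dimensional, forcing two towers) can be absorbed at the cost of one additional tower, giving Rokhlin dimension at most $2$.
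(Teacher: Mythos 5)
Your proposal is correct and is essentially the paper's own argument: the paper likewise takes the two approximate towers in $\B(\ell^2(G)^{\otimes k}\oplus\C)$ supplied by \autoref{prop:DimRok1}, weights them by $t$, and adds a third tower $(1-t)(p_g\otimes 1)$ built from the exact Rokhlin projections for $\Ad(\lambda^{\otimes k})$ on the first factor, only with the roles of the indices $j$ permuted. The sole (harmless) difference is that you invoke the internal Claim of \autoref{prop:DimRok1} to make conditions (a) and (b) exact, whereas the paper settles for the $\ep$-approximate versions.
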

\begin{proof}
Since the action $\bigotimes\limits_{n\in\N}\Ad(\lambda^{\otimes n}\oplus 1)\colon G\to \bigotimes\limits_{n\in\N}\B(\ell^2(G)^{\otimes n}\oplus \C)$ has
Rokhlin dimension 1 by \autoref{prop:DimRok1}, we can find $k\in\N$ and positive contractions
$\widetilde{f}_g^{(j)}\in \B(\ell^2(G)^{\otimes k}\oplus \C)$, for $g\in G$ and $j=0,1$, satisfying
\be\item[(i)] $\|\Ad(\lambda^{\otimes k}_g\oplus 1)(\widetilde{f}^{(j)}_{h})-\widetilde{f}^{(j)}_{gh}\|<\ep$ for all $j=0,1$, and for all $g,h\in g$;
\item[(ii)] $\|\widetilde{f}^{(j)}_{g}\widetilde{f}^{(j)}_{h}\|<\ep$ for all $j=0,1$, and for all $g,h\in G$ with $g\neq h$;
\item[(iii)] $\left\|1-\sum\limits_{g\in G}(\widetilde{f}^{(0)}_g+ \widetilde{f}^{(1)}_{g})\right\|<\ep$.
\ee

For $g\in G$, denote by $p_g\in \B(\ell^2(G))$ the projection onto the span of $\delta_g\in \ell^2(G)$. Then
$\Ad(\lambda_g)(p_h)=p_{gh}$ for all $g,h\in G$, and $\sum\limits_{g\in G}p_g=1$. Regard $p_g$ as an element in
$\B(\ell^2(G)^{\otimes k})\cong \B(\ell^2(G))^{\otimes k}$ via the first factor embedding.

Let $h_0\in C([0,1])$ denote the inclusion of $[0,1]$ into $\C$, and set $h_1=1-h_0\in C([0,1])$. For $g\in G$
and $j=0,1,2$, set
\[f_g^{(j)}=\begin{cases} h_0\widetilde{f}_g^{(j)} &\mbox{if } j=0,1; \\
h_1p_g & \mbox{if } j=2.\end{cases}\]

One checks that conditions (a), (b) and (c) in the statement are satisfied, completing the proof.
\end{proof}

We will need an equivariant version of Sato's property (SI).

\begin{df}\label{df:SI}
Let $G$ be a countable amenable group, let $A$ be a unital separable \ca~with non-empty trace space, and let $\alpha\colon G\to\Aut(A)$ be an action. Let $\omega$ be an ultrafilter.
We say that $(A,\alpha)$ has the \emph{equivariant property (SI)} if for any $\omega$-central sequences $(x_n)_{n\in\N}$ and $(y_n)_{n\in\N}$
of positive contractions in $A$ satisfying
\[\lim_{n\to\omega} \max_{\tau\in T(A)} \tau(x_n)=0, \ \ \inf_{m\in\N}\lim_{n\to\omega} \min_{\tau\in T(A)} \tau(y^m_n)>0, \ \ \mbox{ and } \]
\[\lim_{n\to \omega} \|\alpha_g(x_n)-x_n\|=\lim_{n\to\omega}\|\alpha_g(y_n)-y_n\|=0\] 
for all $g\in G$,
then there exists an $\omega$-central sequence $(s_n)_{n\in\N}$ of contractions in $A$ such that, for all $g\in G$, we have
\[\lim_{n\to\omega}\|s_n^*s_n-x_n\|=0, \ \ \lim_{n\to\omega}\|y_ns_n-s_n\|=0, \ \ \mbox{ and } \ \ \lim_{n\to \omega} \|\alpha_g(s_n)-s_n\|=0.\]
\end{df}

It is easy to check that the definition above is independent of the ultrafilter $\omega$.
When $G$ acts trivially on $A$, the equivariant property (SI) reduces to Sato's property (SI); see Definition~3.3 in~\cite{Sat_rohlin_2010}.
The following result shows that property (SI) implies the equivariant (SI) whenever the action has the weak tracial Rokhlin property, or
whenever $A$ is nuclear and $\partial_eT(A)$ is compact and finite-dimensional.

\begin{prop}\label{prop:EqSI}
Let $G$ be a countable amenable group, let $A$ be a separable, simple, unital \ca\ with
nonempty trace space, and let $\alpha\colon G\to\Aut(A)$ be an action. Assume that
$A$ has property (SI), and assume that at least one of the following conditions holds:
\be\item $\alpha$ has the weak tracial Rokhlin property.
\item $A$ is nuclear and $T(A)$ is a Bauer simplex with $\dim_{\mathrm{cov}}(\partial_eT(A))<\I$.
\ee
Then $(A,\alpha)$ has the equivariant property (SI).
\end{prop}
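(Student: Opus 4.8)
The plan is to verify \autoref{df:SI} via its finite, approximate reformulation: by Kirchberg's $\varepsilon$-test it suffices, for each finite $F\subseteq G$ and each $\varepsilon>0$, to produce a single contraction $s\in A_\omega\cap A'$ with $\|s^*s-x\|<\varepsilon$, $\|ys-s\|<\varepsilon$ and $\|(\alpha_\omega)_g(s)-s\|<\varepsilon$ for all $g\in F$, where $x,y\in A_\omega\cap A'$ are the given positive contractions, $x\in J_A$, $y$ is uniformly large, and both are approximately $\alpha_\omega$-invariant. Applying ordinary property (SI) to $(x,y)$ produces a witness $s^{(0)}\in A_\omega\cap A'$ with $(s^{(0)})^*s^{(0)}=x$ and $ys^{(0)}=s^{(0)}$; by approximate invariance of $x$ and $y$, every translate $(\alpha_\omega)_g(s^{(0)})$ approximately solves the same problem, and $s^{(0)}\in J_A$ since $x\in J_A$. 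The whole task is to merge these translates into one approximately invariant witness.

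In case (1) I would glue the translates along Rokhlin towers. Fix a paving $S_1,\dots,S_N$ (\autoref{thm:OWPavSyst}) and Rokhlin contractions $f_{\ell,g}\in A_\omega\cap A'$ as in \autoref{df:wtRp}, reindexed so as to commute with $s^{(0)}$ and its translates, and form the natural candidate $s=\sum_{\ell,g}f_{\ell,g}^{1/2}(\alpha_\omega)_g(s^{(0)})$. Orthogonality of the towers kills the cross terms and the covariance relation $(\alpha_\omega)_{gh^{-1}}(f_{\ell,h})=f_{\ell,g}$ gives $s^*s=\big(\sum_{\ell,g}f_{\ell,g}\big)\,x$ together with control of $(\alpha_\omega)_h(s)-s$ away from the F\o lner boundary of the paving; centrality of $y$ and $ys^{(0)}=s^{(0)}$ yield $ys\approx s$. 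The decisive point is that, since the whole element is weighted by $s^{(0)}\in J_A$, both residual defects $x-s^*s=\big(1-\sum_{\ell,g}f_{\ell,g}\big)x$ and $(\alpha_\omega)_h(s)-s$ lie in the trace-kernel ideal $J_A$, rather than being small in norm. I would then absorb these defects using that $J_A$ is an equivariant $\sigma$-ideal (\autoref{prop:JAwSigmaId}), which supplies an $\alpha_\omega$-invariant local unit for $J_A$ on the relevant separable subalgebra, in the same spirit as the correction carried out in the proof of \autoref{thm:wtRp}.

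In case (2) I would instead pass to the uniform tracial completion. As $T(A)$ is Bauer, $\overline{A}^u$ is a $W^*$-bundle over the finite-dimensional base $K=\partial_eT(A)$ (\autoref{eg:W*bundle}), and nuclearity forces each fiber $\overline{A}^\tau$ to be hyperfinite. Over such fibers an invariant witness is available, and I would assemble a globally approximately invariant witness at the level of $\overline{A}^u$ by a partition-of-unity argument over $K$, exploiting $\dim_{\mathrm{cov}}(K)<\I$ exactly as in the decomposition-dimension patching of \autoref{lma:FibersToBundleCopyR}. Property (SI), through the canonical surjection $A_\omega\cap A'\to (\overline{A}^u)^\omega\cap(\overline{A}^u)'$ whose kernel is contained in $J_A$, then lets me lift this bundle-level witness back to the required contraction in $A_\omega\cap A'$.

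The main obstacle, in both cases, is reconciling the \emph{exact} norm identity $s^*s=x$ demanded by \autoref{df:SI} with (approximate) invariance: a crude convex F\o lner average of $s^{(0)}$ is norm-invariant but has an uncontrollable square, whereas the Rokhlin-orthogonalized sum keeps $s^*s$ under control but fails to be norm-invariant at the tower boundary, where orthogonality turns the boundary error into a norm-sized term. The resolution, and the technical heart of the argument, is that every such error is trace-small — it sits in $J_A$ because the witness is dominated by $x\in J_A$ — so the approximate, modulo-$J_A$ relations can be promoted to genuine ones by the invariant local unit coming from the equivariant $\sigma$-ideal, with Kirchberg's $\varepsilon$-test finally assembling the finite-stage data into the central sequence $(s_n)_{n\in\N}$ required by \autoref{df:SI}.
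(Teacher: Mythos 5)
Your overall framework (glue translates of an (SI)-witness along Rokhlin towers, F\o lner-average, finish with Kirchberg's $\varepsilon$-test) is in the right family of ideas, but the correction step at the heart of your case (1) argument fails, and this is a genuine gap. With $s=\sum_{\ell,g}f_{\ell,g}^{1/2}(\alpha_\omega)_g(s^{(0)})$ you correctly find $s^*s=\bigl(\sum_{\ell,g}f_{\ell,g}\bigr)x$ and correctly observe that the defects $x-s^*s=(1-\sum_{\ell,g}f_{\ell,g})x$ and $(\alpha_\omega)_h(s)-s$ lie in $J_A$ without being small in norm. But \autoref{df:SI} demands the \emph{norm} identity $\|s_n^*s_n-x_n\|\to 0$, and the $\sigma$-ideal correction of \autoref{prop:JAwSigmaId} cannot promote a congruence modulo $J_A$ to a norm identity here, because everything in sight already lies in $J_A$: the hypothesis $\lim_{n\to\omega}\max_\tau\tau(x_n)=0$ puts $x$ in $J_A$, hence also $s^{(0)}$ and $s$. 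If $e$ is the invariant local unit with $ec=c$ for $c$ in the relevant separable piece of $J_A$, the correction $s\mapsto s(1-e)$ used in the proof of \autoref{thm:wtRp} gives $(1-e)s^*s(1-e)$, and since $ex=x$ this equals $0$ rather than $x$; the same objection applies to the invariance defect, whose boundary terms have norm of order $\|x\|^{1/2}$ and cannot be crushed without crushing $s$ itself. The trick works in \autoref{thm:wtRp} precisely because there the relations to be enforced hold between elements \emph{not} in $J_A$, with errors in $J_A$; property (SI) is the opposite situation.

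The paper resolves this by reordering the construction. Instead of applying (SI) to $(x,y)$ and then patching, it first manufactures a new large element $z_n=y_n^{1/2}f^{1/m}y_n^{1/2}\le y_n$ cut down by a \emph{single} Rokhlin contraction $f$ from a single $(K,\varepsilon)$-invariant tower $S$ (chosen by pigeonhole so that $\tau(y^mf)\ge\kappa\,\tau(y^m)$ uniformly, preserving the trace lower bound), so that the translates $\alpha_g(z)$, $g\in S$, are pairwise approximately orthogonal. Applying ordinary (SI) to $(x,z)$ yields $r$ with $r^*r\approx x$ and $zr\approx r$; the relation $zr\approx r$ forces the translates $\alpha_g(r)$ to be approximately orthogonal, so the normalized F\o lner sum $s=\frac{1}{\sqrt{|S|}}\sum_{g\in S}\alpha_g(r)$ satisfies $s^*s\approx\frac{1}{|S|}\sum_{g\in S}\alpha_g(x)=x$ in norm, is approximately invariant because $S$ is $(K,\varepsilon)$-invariant, and satisfies $ys\approx s$ since $z\le y$ --- no $J_A$-correction is needed. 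Your sketch for case (2) has a related problem: property (SI) is invisible at the level of the $W^*$-bundle $\overline{A}^u$, since both $x$ and any witness $s$ lie in $J_A$ and hence map to $0$ under $A_\omega\cap A'\to\M^\omega\cap\M'$, so there is no bundle-level witness to patch and lift; the paper handles this case by quoting Propositions 4.4 and 5.1 of \cite{Sat_actions_2016}.
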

\begin{proof}
(1). Let $(x_n)_{n\in\N}$ and $(y_n)_{n\in\N}$ be sequences of positive contractions in $A$
as in \autoref{df:SI}.
Let $K\subseteq G$ be a finite set and let $\ep>0$.

\textbf{Claim:} \emph{there exist a $(K,\epsilon)$-invariant finite subset $S\subseteq G$ and an $\omega$-central sequence $(z_n)_{n\in\N}$
of positive contractions in $A$ satisfying the following conditions for all $g,h\in S$ with $g\neq h$:
\[z_n\leq y_n, \ \ \inf_{m\in\N}\lim_{n\to\omega}\min_{\tau\in T(A)} \tau(z_n^m)>0, \ \ \mbox{ and } \ \
 \lim_{n\to\omega} \alpha_g(z_n)\alpha_h(z_n)=0.
\]}

Find a paving system
$S_1,\ldots,S_N$ for $G$ such that each $S_\ell$ is $(K,\ep)$-invariant; see \autoref{thm:OWPavSyst}.
Let $f_{\ell, g}\in A_\omega\cap A'$, for $\ell=1,\ldots,N$ and $g\in S_\ell$,
be positive contractions as in the definition of the weak tracial \Rp\ for $\alpha$.
For $\ell=1,\ldots,N$, set $\kappa_\ell=\tau(f_{\ell,g})>0$
for some $g\in S_\ell$ and some $\tau\in T(A)$ (see condition (e) in \autoref{df:wtRp}).
Observe that
\[\tau(1)=\sum_{\ell=1}^N \sum_{g\in S_\ell}\tau(f_{\ell,g})= \sum_{\ell=1}^N\kappa_\ell|S_\ell|.\]

Set
\[\delta=\inf\limits_{m\in\N}\lim\limits_{n\to\omega} \min\limits_{\tau\in T(A)} \tau(y^m_n)>0 \ \ \mbox{ and } \ \ \kappa=\min_{\ell=1,\ldots,N}\kappa_\ell >0.\]
In order to prove the claim, it suffices to show that given $m\in\N$ and a finite subset $F\subseteq A$, there is a sequence $(z_n)_{n\in\N}$ of
positive contractions in $A$ satisfying
\bi
\item $\lim_{n\to\omega}\|z_na-az_n\|<\frac{1}{m}$ for all $a\in F$;
\item $z_n\leq y_n$ for all $n\in\N$;
\item $\lim_{n\to\omega} \|\alpha_g(z_n)\alpha_h(z_n)\|<\frac{1}{m}$ for all $g,h\in S$ with $g\neq h$; and
\item $\lim_{n\to\omega}\min_{\tau\in T(A)} \tau(z_n^m)>\kappa\delta-\frac{1}{m}$.
\ei

Fix $m\in\N$ and a finite subset $F\subseteq A$.
Let $y\in A_\omega\cap A'$ be the equivalence class of $(y_n)_{n\in\N}$.
Then
\[\tau(y^m)=\sum_{\ell=1}^N \sum_{g\in S_\ell}\tau(y^mf_{\ell,g}).\]
It follows that there exist $\ell_0=1,\ldots,N$ and $g_0\in S_{\ell_0}$ such that
\[\tau(y^m f_{\ell_0,g_0})\geq \tau(y^m)\kappa_{\ell_0}\geq \tau(y^m)\kappa.\]

Set $S=S_{\ell_0}$. Without loss of generality, we
can assume that the unit $e$ of $G$ belongs to $S$.
Set $f=f_{\ell_0,e}$, and let $(f_k)_{k\in\N}$ be a sequence of positive contractions
in $A$ representing $f$.

Choose $k_m\in\N$ large enough so that the following conditions are satisfied for all $g,h\in S$ with $g\neq h$ and for all $a\in F$:
\[\|\alpha_g(f_{k_m}^{1/m})\alpha_h(f_{k_m}^{1/m})\|<\frac{1}{m}, \ \ \|f_{k_m}^{1/m}a-af_{k_m}^{1/m}\|<\frac{1}{m}, \ \mbox{ and } \ \min_{\tau\in T(A)}\tau(f_{k_m})>\kappa-\frac{1}{m\delta}.\]
For $n\in\N$, set
\[z_n=y_n^{1/2}f_{k_m}^{1/m}y_n^{1/2} \in A^\omega.\]
The first three items above are routinely verified, so we only check the last one:
\[\lim_{n\to\omega}\min_{\tau\in T(A)} \tau(z_n^m)=\lim_{n\to\omega}\min_{\tau\in T(A)} \tau(y_n^mf_{k_m})\geq \lim_{n\to\omega}\min_{\tau\in T(A)} \tau(y_n^m)(\kappa-\frac{1}{m\delta})>\delta\kappa-\frac{1}{m},\]
as desired. This proves the claim.

Use property (SI) for $A$ with $(x_n)_{n\in\N}$ and $(z_n)_{n\in\N}$ to find an $\omega$-central sequence $(r_n)_{n\in\N}$
of positive contractions in $A$ such that
\[\lim_{n\to\omega}\|r_n^*r_n-x_n\|=0 \ \ \mbox{ and } \ \ \lim_{n\to\omega}\|z_nr_n-r_n\|=0.\]
For $n\in\N$, set $s_n=\frac{1}{\sqrt{|S|}} \sum\limits_{g\in S}\alpha_g(r_n)$. We will show that $(s_n)_{n\in\N}$ satisfies the
conditions in \autoref{df:SI}. We use that $\lim\limits_{n\to\omega}\|\alpha_g(r_n)-r_n\|=0$ at the second step to get
\begin{align*}
\lim_{n\to\omega}\|y_ns_n-s_n\|= \lim_{n\to\omega} \left\|\sum_{g\in S}\alpha_g(y_nr_n-r_n)\right\| =0.
\end{align*}
On the other hand,
\begin{align*}
\lim_{n\to\omega}\|x_n-s_n^*s_n\|&= \lim_{n\to\omega} \left\|x_n-\frac{1}{|S|}\sum_{g,h\in S} \alpha_g(r_n^*)\alpha_h(r_n)\right\|\\
&= \lim_{n\to\omega} \left\|x_n-\frac{1}{|S|}\sum_{g,h\in S} \alpha_g(r_n^*z_n)\alpha_h(z_nr_n)\right\|\\
&=\lim_{n\to\omega} \left\|\frac{1}{|S|}\sum_{g\in S} \alpha_g(x_n-r_n^*z_nz_nr_n)\right\|\\
&=\lim_{n\to\omega} \left\|\frac{1}{|S|}\sum_{g\in S} \alpha_g(x_n-r_n^*r_n)\right\|=0.
\end{align*}

Finally, since $S$ is $(K,\epsilon)$-invariant, it is easy to verify that $\lim\limits_{n\to\omega}\|\alpha_k(s_n)-s_n\|<\ep$
for all $k\in K$. Since $K\subseteq G$ and $\ep>0$ are arbitrary, an application of Kirchberg's $\ep$-test shows that
$(A,\alpha)$ has the equivariant property (SI).

(2). This follows by combining Proposition~4.4 and Proposition~5.1 in~\cite{Sat_actions_2016}.
\end{proof}

For a fixed group $G$ and a fixed normal subgroup $H$ of finite index, 
Ocneanu's model action $\mu_{G/H}$ (see \autoref{eg:FiniteGModelStrMcDuff} and \autoref{eg:muG}) 
can be realized as the weak extension of $\bigotimes\limits_{n\in\N}\Ad(\lambda_{G/H})$. With $\pi_H\colon G\to G/H$ denoting the canonical
quotient map, the composition $\mu_{G/H}\circ\pi_H$ is an action of 
$G$ on $\R$, which we will abbreviate to $\mu^G_H$. This action is easily
seen to be McDuff. Moreover, since $\mu^G_H\otimes \mu_G$ is an 
outer action of $G$ on $\R$, it is 
cocycle conjugate to $\mu_G$. In particular, any $\mu_G$-McDuff action
is automatically $\mu_H^G$-McDuff.

We are now ready to prove \autoref{thm:dimRokatmost2}.

\begin{proof}[Proof of \autoref{thm:dimRokatmost2}.]
We adopt the notation from \autoref{eg:EqW*bundle}. In particular, we denote by $(\M,\gamma)$ the equivariant $W^*$-bundle obtained
from $(A,\alpha)$.
To lighten the notation, we write $\M_\R$ for the tensor product
$W^*$-bundle $\M\overline{\otimes}\R$, and we write
$\gamma_\R=\gamma\otimes\id_\R$, which is an action of $G$ on
$\M_\R$. Similarly, we write $A_{\mathcal{Z}}$ for $A\otimes\mathcal{Z}$
and $\alpha_{\mathcal{Z}}=\alpha\otimes\id_{\mathcal{Z}}$. Note that
$(\M_\R,\gamma_\R)$ is then the equivariant $W^*$-bundle naturally 
obtained from $(A_\mathcal{Z},\alpha_{\mathcal{Z}})$.

Let $H\leq G$ be a normal subgroup of finite index, and adopt
the notation introduced before this proof.
By \autoref{prop:FiberW*Rp}, the fiber-action $\gamma^{\lambda}\colon G\to\Aut(\mathcal{M}_\lambda)$ is $\mu_G$-McDuff for
every $\lambda\in\partial_eT(A)/G$.
In particular, $\gamma^{\lambda}$ is $\mu^G_H$-McDuff.
It thus follows from \autoref{thm:MoRabsorbsMcDuff} 
that $\gamma_\R=\gamma\otimes \id_{\mathcal{R}}$ is cocycle conjugate to $\gamma\otimes \mu^G_H$.
Hence by \autoref{thm:W*BdleAbsR}, there is a unital, equivariant homomorphism
\[\psi\colon (\R,\mu^G_H)\to \left(\M_\R^\omega\cap\M_\R',\gamma_\R^\omega\right).\]

By Lemma~3.10 in \cite{BBSTWW_covering_2015} (see Theorem~3.1 
in~\cite{MatSat_decomposition_2014} for the case of one trace), 
there is a natural surjective equivariant homomorphism
\[\pi\colon (A_{\mathcal{Z}})_\omega\cap A_{\mathcal{Z}}'\to \mathcal{M}^\omega\cap \mathcal{M}'.\]
Let $\ep>0$, and let $k\in\N$ be as in the conclusion of \autoref{prop:GactIkdim2}.
Denote by 
\[\varphi\colon \left(\B(\ell^2(G/H)^{\otimes k}),\Ad(\lambda_{G/H}^{\otimes k})\circ \pi_H\right) \to \left(\M_\R^\omega\cap \M_\R',\gamma_\R^\omega\right)\] 
the restriction of $\psi$ to $\B(\ell^2(G/H)^{\otimes k})\subseteq \R$.
Since $J_A$ is an equivariant $\sigma$-ideal, there exists
a completely positive contractive equivariant order zero map 
\[\rho\colon \left(\B(\ell^2(G/H)^{\otimes k}),\Ad(\lambda_{G/H}^{\otimes k})\right) \to ((A_{\mathcal{Z}})_\omega\cap A_{\mathcal{Z}}',\alpha_\omega)\]
making the following diagram commute:
\begin{align*}
\xymatrix{ && (A_{\mathcal{Z}})_\omega\cap A_{\mathcal{Z}}'\ar[d]^-{\pi}\\
\B(\ell^2(G/H)^{\otimes k})\ar[urr]^-{\rho}\ar[rr]_-{\varphi} && \mathcal{M}_\R^\omega\cap \mathcal{M}_\R'.}
\end{align*}

We denote by $e\in \B(\ell^2(G/H))$ the projection onto the constant functions, and regard $e^{\otimes k}$ as a projection
in $\B(\ell^2(G/H)^{\otimes k})$. One can easily verify that $\tau_\omega(\rho^m(e^{\otimes k}))=1/[G:H]^k$ for all $\tau\in T(A)=T(A_{\mathcal{Z}})$ and for all $m\in\N$.

Since $\alpha_{\mathcal{Z}}=\alpha\otimes\id_{\mathcal{Z}}$ has the weak tracial Rokhlin property by
\autoref{thm:wtRp}, part~(a) of \autoref{prop:EqSI} implies that there exists a contraction
$s\in ((A_{\mathcal{Z}})_\omega\cap A_{\mathcal{Z}}')^{\alpha_\omega}$ satisfying $s^*s=1-\rho(1)$
and $\rho(e^{\otimes k})s=s$. By \autoref{thm:univprop}, there exists
a unital equivariant homomorphism 
\[\theta\colon (I_{G/H}^{(k)},\mu^{(k)})\to ((A_{\mathcal{Z}})_\omega\cap A_{\mathcal{Z}}',(\alpha_\mathcal{Z})_\omega).\]
If $f_{\overline{g}}^{(j)}\in I_{G/H}^{(k)}$, for ${\overline{g}}\in G/H$ and $j=0,1,2$, are positive contractions as in the conclusion of
\autoref{prop:GactIkdim2}, the positive contractions $\theta(f_{\overline{g}}^{(j)}) \in (A_{\mathcal{Z}})_\omega\cap A_{\mathcal{Z}}'$ satisfy the conditions
of \autoref{df:Rdim} up to $\ep$. Since $\ep>0$ is arbitrary, the result follows using a reindexation argument
(or from countable saturation of the equivariant ultrapower of $A_{\mathcal{Z}}$; see Subsection~2.2.4 in~\cite{GarLup_applications_2018}).
\end{proof}

\section{Equivariant \texorpdfstring{$\mathcal{Z}$}{Z}-stability of amenable actions}

In this section, we prove Theorem~\ref{thmintro:Zabs}, whose statement
we reproduce below.

\begin{thm}\label{thm:EqJiangSuAbs}
Let $G$ be a countable amenable group, let $A$ be a separable, simple, \uca\ with property (SI), 
and let $\alpha\colon G\to\Aut(A)$ be an action.
Suppose that $A$ is stably finite, that $T(A)$ is a nonempty Bauer simplex, that $\dim \partial_eT(A)<\I$,
that $\overline{A}^\tau$ is McDuff for all $\tau\in\partial_eT(A)$,
and that the induced action of $G$ on $\partial_eT(A)$ has finite orbits and Hausdorff orbit space.
Then $\alpha$ is strongly cocycle conjugate to
$\alpha\otimes\id_{\mathcal{Z}}$.
\end{thm}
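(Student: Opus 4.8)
The goal is to show strong cocycle conjugacy between $\alpha$ and $\alpha\otimes\id_{\mathcal{Z}}$. Let me think about the structure.

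The theorem assumes: $A$ is separable, simple, unital, stably finite, property (SI); $T(A)$ is a Bauer simplex with finite-dimensional extreme boundary; each tracial completion $\overline{A}^\tau$ is McDuff; and the $G$-action on $\partial_e T(A)$ has finite orbits and Hausdorff orbit space.

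Strategy: The standard route (via Matui-Sato / Sato / Szabó) to equivariant $\mathcal{Z}$-stability is to produce a unital equivariant homomorphism from $(\mathcal{Z}_{\text{model}}, \id)$ — or more concretely from a suitable dimension-drop algebra with trivial action — into the equivariant central sequence algebra $(A_\omega \cap A', \alpha_\omega)$.

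Let me think about what's available. The key intermediate object is the $W^*$-bundle $(\mathcal{M}, \gamma) = (\overline{A}^u, \gamma)$ over $K/G = \partial_e T(A)/G$ from Example \ref{eg:EqW*bundle}. The fibers are finite direct sums $\bigoplus_{\sigma \in G\cdot\tau}\overline{A}^\sigma$, each McDuff by hypothesis.

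Here's the plan:

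Since each fiber $\overline{A}^\tau$ is McDuff, the fiber action is $\id_{\mathcal{R}}$-McDuff. Because $K/G$ is finite-dimensional (the quotient of a finite-dimensional space by an action with finite orbits), Theorem \ref{thm:McDuffFromFiberToBundle} (or Corollary \ref{cor:DimKGamenable} for amenable $G$) applies to conclude that $(\mathcal{M}, \gamma)$ is $\id_{\mathcal{R}}$-McDuff. This gives a unital equivariant homomorphism $(\mathcal{R}, \id_{\mathcal{R}}) \to (\mathcal{M}^\omega \cap \mathcal{M}', \gamma^\omega)$, i.e., the bundle absorbs the trivial $\mathcal{R}$-action equivariantly.

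Next, I want to transfer this $W^*$-bundle absorption to a $C^*$-algebraic statement. I would use the surjective equivariant homomorphism $\pi\colon A_\omega \cap A' \to \mathcal{M}^\omega \cap \mathcal{M}'$ with kernel contained in $J_A$ (Lemma 3.10 of \cite{BBSTWW_covering_2015}), together with $J_A$ being an equivariant $\sigma$-ideal (Proposition \ref{prop:JAwSigmaId}). I would take a matrix subalgebra $M_k \subseteq \mathcal{R}$ with trivial action, obtain an order-zero lift $\rho\colon M_k \to (A_\omega \cap A')^{\alpha_\omega}$ (equivariant, via the Choi-Effros theorem combined with averaging using the $\sigma$-ideal), and then invoke equivariant property (SI) — which holds by Proposition \ref{prop:EqSI}(2), since $A$ is nuclear would be assumed, or directly by the hypotheses here — to produce a contraction $s$ with $s^*s = 1 - \rho(1)$ and $\rho(e_{11})s = s$. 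Then Theorem \ref{thm:univprop} yields a unital equivariant homomorphism from the dimension-drop algebra $(I_{k,k+1}, \id)$ into $(A_\omega \cap A', \alpha_\omega)$.

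The final step passes from a unital embedding of a dimension-drop algebra with \emph{trivial} action into the central sequence algebra to actual strong cocycle conjugacy with $\alpha \otimes \id_{\mathcal{Z}}$. Here I would appeal to the equivariant McDuff-type / Szabó-style absorption theorem: a unital equivariant homomorphism $(\mathcal{Z}, \id_{\mathcal{Z}}) \to (A_\omega \cap A', \alpha_\omega)$ (obtained by iterating the dimension-drop embeddings, using that $\mathcal{Z}$ is an inductive limit of dimension-drop algebras and that the connecting maps can be arranged equivariantly with the trivial action) implies that $\alpha$ is strongly cocycle conjugate to $\alpha \otimes \id_{\mathcal{Z}}$. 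This is the $C^*$-analog of condition (2) $\Rightarrow$ (1) in Theorem \ref{thm:W*BdleAbsR}, now at the level of $\mathcal{Z}$-absorption rather than $\mathcal{R}$-absorption.

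\textbf{Main obstacle.} The hard part is the transfer from the $W^*$-bundle level (where we have clean McDuff absorption) down to the $C^*$-level while maintaining equivariance and producing an honest \emph{strong} cocycle conjugacy. The $\sigma$-ideal correction and the dimension-drop universal property (Theorem \ref{thm:univprop}) are the technical engine, but the delicate point is ensuring the order-zero maps assemble into a unital equivariant homomorphism from $\mathcal{Z}$ (not just from a single dimension-drop building block) with the \emph{trivial} action on $\mathcal{Z}$ — this requires iterating the construction coherently and then quoting an equivariant version of the characterization of $\mathcal{Z}$-stability (e.g.\ the amenable-group equivariant McDuff theorem). Finite-dimensionality of $\partial_e T(A)$ is essential precisely so that Theorem \ref{thm:McDuffFromFiberToBundle} applies without the extra copy of $\mathcal{R}$; this is what lets us dispense with the $\mathcal{Z}$ on the $\alpha$ side and obtain genuine absorption rather than mere stabilization.
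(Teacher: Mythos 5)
Your proposal follows essentially the same route as the paper's proof: pass to the equivariant $W^*$-bundle, use that the fibers are $\id_{\mathcal R}$-McDuff together with finite-dimensionality of the base (\autoref{cor:DimKGamenable}) to get a unital copy of $M_k$ in $(\mathcal{M}^\omega\cap\mathcal{M}')^{\gamma^\omega}$, lift to an order zero map into $(A_\omega\cap A')^{\alpha_\omega}$, apply equivariant property (SI) to produce the element $s$ and hence a unital embedding of a dimension drop algebra, and conclude by building a unital homomorphism $\mathcal{Z}\to(A_\omega\cap A')^{\alpha_\omega}$ and invoking the equivariant McDuff-type equivalence. The only cosmetic difference is that the paper lifts via the surjection of fixed-point algebras (\autoref{prop:FixedPtAlgsSurjective}) and projectivity of $C_0([0,1))\otimes M_k$, and quotes the non-equivariant R{\o}rdam--Winter result for the dimension drop embedding since the action on $M_k$ is trivial, whereas you propose the $\sigma$-ideal/Choi--Effros correction and \autoref{thm:univprop}; both work.
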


Observe that we do not assume any form of outerness in the theorem above. In particular,
it reveals new information in the case of strongly
self-absorbing actions; see \autoref{cor:SSAActZstable}. It also contains and extends Sato's
recent result from \cite{Sat_actions_2016}
regarding $\mathcal{Z}$-stability of the crossed product, see \autoref{cor:CPisZstable}.
It should be pointed out that our methods, which
rely on the study of equivariantly McDuff $W^*$-bundles carried out in Section~3, are quite different from Sato's.

This result is a crucial ingredient in upcoming work of the first named author, Phillips, and Wang \cite{GarPhiWan_absorption_2017},
where it is shown that an action of an amenable group $G$ is strongly outer if and only if it absorbs a canonical \emph{model}
action of $G$ on $\mathcal{Z}$.

\autoref{thm:EqJiangSuAbs} can be regarded as an equivariant analog of the results of Kirchberg
and R{\o}rdam in~\cite{KirRor_central_2014} for the nonequivariant setting.
Our proof follows, broadly speaking, a similar strategy.
Namely, we first show that the $W^*$-bundle $(M,\gamma)$ obtained from $(A,\alpha)$ absorbs $(\mathcal{R},\id_{\mathcal{R}})$ equivariantly.
This gives (and, in fact, is equivalent to having) a unital homomorphism $\rho\colon M_k\to (\mathcal{M}^\omega\cap \mathcal{M}')^{\gamma^\omega}$ for
some $k\geq 1$; see \autoref{thm:W*BdleAbsR}. Property (SI), in combination with the weak tracial Rokhlin property for $\alpha$ (established
in \autoref{thm:wtRp}), yields an equivariant form of property (SI) which is used to extended $\rho$ to a unital
homomorphism from the dimension drop algebra $I_{k,k+1}$ into
$(A_\omega\cap A')^{\alpha_\omega}$. It is then easy to conclude from this that $\alpha$ absorbs $\id_{\mathcal{Z}}$ tensorially.

We begin with a proposition that may be interesting in its own right.

\begin{prop}\label{prop:FixedPtAlgsSurjective}
Let $G$ be a countable discrete amenable group, let $A$ be a separable \ca, and let $\alpha\colon G\to\Aut(A)$ be an action.
If $\omega$ is any free ultrafilter over $\N$, then there is a conditional expectation $E_{\omega}\colon A_\omega\to (A_\omega)^{\alpha_{\omega}}$
satisfying $E_\omega(A_\omega\cap A')=(A_\omega\cap A')^{\alpha_{\omega}}$.

As a consequence, if $\beta\colon G\to\Aut(B)$ is an action of $G$ on a \ca\ $B$, and $\pi\colon A_\omega\cap A'\to B$ is an equivariant surjective
homomorphism, then $\pi$ restricts to a surjective homomorphism $(A_\omega\cap A')^{\alpha_\omega}\to B^\beta$.
\end{prop}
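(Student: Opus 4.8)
The plan is to construct $E_\omega$ by averaging along a Følner sequence and then to realize that average \emph{inside} $A_\omega$ through a reindexation argument; the surjectivity statement will then follow formally by descending $E_\omega$ through the quotient map $\pi$.

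First I would fix, using amenability of $G$, a Følner sequence $(F_n)_{n\in\N}$ in $G$, and consider the unital completely positive averaging maps $Q_n=\frac{1}{|F_n|}\sum_{g\in F_n}\alpha_g\colon A\to A$. Applied entrywise, these induce unital completely positive maps $P_n\colon A_\omega\to A_\omega$, and I would record three elementary properties: (i) $P_n$ is the identity on $(A_\omega)^{\alpha_\omega}$ for every $n$, since each summand fixes an invariant element; (ii) the Følner condition gives $\|(\alpha_\omega)_h(P_n(a))-P_n(a)\|\le \frac{|hF_n\triangle F_n|}{|F_n|}\|a\|\to 0$ for every $h\in G$, so $P_n(a)$ is asymptotically $\alpha_\omega$-invariant; and (iii) because $\alpha_g(A)=A$, each $(\alpha_\omega)_g$, and hence each $P_n$, preserves $A_\omega\cap A'$.

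The map $E_\omega$ will be obtained as an $\omega$-limit of the $P_n$, realized inside $A_\omega$. Concretely, the assignment $a\mapsto [(P_n(a))_n]$ defines a unital completely positive map from $A_\omega$ into the second ultrapower $(A_\omega)_\omega$ whose range, by (ii), lies in the fixed-point algebra of the induced action and which, by (i), restricts to the diagonal inclusion on $(A_\omega)^{\alpha_\omega}$. Using separability of $A$, a reindexation argument (in the spirit of the countable saturation invoked elsewhere in this paper) lets me compress this back to a single unital completely positive map $E_\omega\colon A_\omega\to A_\omega$ with range contained in $(A_\omega)^{\alpha_\omega}$ and equal to the identity on $(A_\omega)^{\alpha_\omega}$. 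By Tomiyama's theorem, $E_\omega$ is then a conditional expectation onto $(A_\omega)^{\alpha_\omega}$, and property (iii) guarantees that the same construction performed inside the $\alpha_\omega$-invariant subalgebra $A_\omega\cap A'$ yields $E_\omega(A_\omega\cap A')=(A_\omega\cap A')^{\alpha_\omega}$. I expect this reindexation step---making the averaging limit land inside $A_\omega$ as a genuine conditional expectation---to be the main technical obstacle; note, however, that the delicate idempotency and identity-on-fixed-points property is essentially free, since each $P_n$ is already exactly the identity on invariant elements, so any honest limit inherits this.

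For the consequence, write $D=A_\omega\cap A'$ and let $\pi\colon D\to B$ be equivariant and surjective. The inclusion $\pi(D^{\alpha_\omega})\subseteq B^\beta$ is immediate, since $\beta_g(\pi(z))=\pi((\alpha_\omega)_g(z))=\pi(z)$ for invariant $z$. For the reverse inclusion I would use that $I=\ker\pi$ is a closed, $\alpha_\omega$-invariant ideal of $D$, and that each $P_n$---being a convex combination of the automorphisms $(\alpha_\omega)_g$, which preserve $I$---maps $I$ into $I$; hence so does $E_\omega$, and $E_\omega$ descends to a unital completely positive map $\overline{E}\colon B\to B$ with $\pi\circ E_\omega=\overline{E}\circ\pi$. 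Equivariance of $\pi$ gives $\pi\circ P_n=P_n^B\circ\pi$, where $P_n^B=\frac{1}{|F_n|}\sum_{g\in F_n}\beta_g$, so $\overline{E}$ is the corresponding $\omega$-limit of the $\beta$-averages $P_n^B$. Since $P_n^B$ is the identity on $B^\beta$ for every $n$, it follows that $\overline{E}$ restricts to the identity on $B^\beta$. Finally, given $b\in B^\beta$, choose by surjectivity a lift $x\in D$ with $\pi(x)=b$; then $E_\omega(x)\in (A_\omega\cap A')^{\alpha_\omega}$ and $\pi(E_\omega(x))=\overline{E}(\pi(x))=\overline{E}(b)=b$, proving that $\pi$ restricts to a surjection onto $B^\beta$.
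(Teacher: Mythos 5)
Your F\o lner-averaging strategy is the same as the paper's, and your preliminary observations (i)--(iii) about the maps $P_n$ are all correct. The genuine gap is the step you yourself flag as the main obstacle: compressing the unital completely positive map $a\mapsto [(P_n(a))_{n}]$, which takes values in the second ultrapower $(A_\omega)_\omega$, back to a map landing in $A_\omega$. Separability of $A$ does not help here: the domain of the desired map is all of $A_\omega$, which is nonseparable, so there is no countable dense set to diagonalize over; reindexation and countable saturation only produce, for each \emph{separable} subset of $A_\omega$, a suitable element or map; and there is no unital completely positive retraction $(A_\omega)_\omega\to A_\omega$ left-inverse to the constant-sequence embedding that you could compose with. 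As written, the construction of $E_\omega$ therefore does not go through, and every later step that quietly treats $E_\omega$ as a pointwise limit of the $P_n$ --- that $E_\omega$ is the identity on $(A_\omega)^{\alpha_\omega}$, that $E_\omega(\ker\pi)\subseteq\ker\pi$, and that the induced map $\overline{E}$ on $B$ is the identity on $B^\beta$ --- inherits the same gap. (In particular the ``identity on fixed points'' property is not free: for a decreasing sequence of sets in $\omega$ the diagonal set need not lie in $\omega$, so one cannot simply pass the exact identities $P_n(a)=a$ to the limit.)

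The paper's way around this is to perform the averaging diagonally at the level of representing sequences: it defines $P\colon \ell^\infty(A)\to\ell^\infty(A)$ by $P(a)_m=\frac{1}{|F_m|}\sum_{g\in F_m}\alpha_g(a_m)$, with the averaging set varying with the coordinate, checks that $P$ preserves $c_\omega(A)$ and that $\alpha_g\circ P-P$ maps into $c_\omega(A)$, and then sets $E_\omega=q\circ P\circ L$ for a set-theoretic (non-linear) lift $L\colon A_\omega\to\ell^\infty(A)$ chosen element by element so that, for $a$ fixed by $\alpha_\omega$, the $m$-th entry of $L(a)$ is $(F_m,1/m)$-invariant, and, for $a\in A_\omega\cap A'$, the $m$-th entry approximately commutes with $\alpha_{g^{-1}}(z_k)$ for $g\in F_m$ and $k\le m$, where $(z_k)_{k}$ is dense in the unit ball of $A$. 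Well-definedness (independence of the lift) follows because $P(c_\omega(A))\subseteq c_\omega(A)$; complete positivity is obtained by factoring $q\circ P$ through the ultraproduct $\prod_\omega C(F_m,A)$; and the two lift conditions are exactly what make $E_\omega$ the identity on the fixed-point algebra and keep $E_\omega(A_\omega\cap A')$ inside $A_\omega\cap A'$. To repair your write-up, replace the ``compression from the second ultrapower'' by this diagonal construction with a carefully chosen lift; the surjectivity statement then follows as in your last paragraph, or, more directly as in the paper, from $\pi(E_\omega(a))=\pi(a)$ for any lift $a$ of an element of $B^\beta$.
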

\begin{proof}
Using amenability of $G$, choose a F\o lner sequence $(F_n)_{n\in\N}$. Let $P \colon \ell^{\infty}(A) \to \ell^{\infty}(A)$ be the unital completely positive map given by
\[P(a)_n = \frac{1}{|F_n|}\sum_{g\in F_n}\alpha_g(a_n)\]
for all $a\in \ell^\I(A)$ and all $n\in\N$.

By a slight abuse of notation, we denote also by $\alpha$ the induced action on $\ell^{\infty}(A)$.
Note that for any $g \in G$ and any $a\in \ell^{\infty}(A)$, the difference
$\alpha_g(P(a)) - P(a)$ belongs to $c_\omega(A)$.  Furthermore, $c_{\omega}(A)$ is invariant under $P$.

Denote by $q \colon \ell^{\infty}(A) \to A_{\omega}$ the quotient map.
Let $(z_k)_{k\in\N}$ be an enumeration of a dense subset of the unit ball of $A$.
Let $L\colon A_{\omega} \to \ell^{\infty}(A)$ be a (possibly non-linear) lifting for $q$ with $\|L(a)\| = \|a\|$ for all $a \in A_{\omega}$,
satisfying
\begin{enumerate}
    \item If $a \in (A_\omega)^{\alpha_{\omega}}$ then for all $m\in\N$ and for all $g \in F_m$ we have $\|\alpha_g(L(a)_m) - L(a)_m\| < 1/m$.
    \item If $a \in A_{\omega} \cap A'$ then for all $m\in\N$, for all $k=1,\ldots,m$, and for all $g \in F_m$ we have
    $\|[L(a)_m,\alpha_{g^{-1}}ß(z_k)]\| < 1/m$.
\end{enumerate}
(A lift $L$ satisfying the above conditions is easy to construct, defining it element by element, since we do not
even require $L$ to be linear or continuous.)

We now set $E_{\omega} = q \circ P \circ L$. Since for any $x,y \in A_{\omega}$ and any $\lambda_1,\lambda_2 \in \C$, the differences
\[L(\lambda_1x + \lambda_2y) - \lambda_1L(x) - \lambda_2L(y), L(xy) - L(x)L(y), \mbox{ and } L(x^*)-L(x)^*\]
belong to $c_{\omega}(A)$, it follows that $E_{\omega}$ is unital and that the image
of $E_{\omega}$ is (pointwise) fixed by the group action.

We claim that $E_{\omega}$ is completely positive. To this end, we can decompose $q \circ P$ differently as follows. Let
$C = \prod_{m=1}^{\infty} C(F_m,A)$ (that is, the \ca ~of bounded sequences whose $m$'th term is in $C(F_m,A)$).
For $m \in \N$, let $\theta_m \colon A \to C(F_m,A)$ be given by $\theta_m(a)(g) = \alpha_g(a)$ for all $a\in A$ and all
$g\in F_m$. Now let $\theta \colon \ell^{\infty}(A) \to C$ be given by $\theta(a)_m = \theta_m(a_m)$ for all
$a\in \ell^\I(A)$ and all $m\in\N$.
Denote by $\prod_{\omega}C(F_m,A)$ the associated ultraproduct\footnote{Recall that if $(A_n)_{n\in\N}$ is a sequence of \ca s and $\omega$ is a
free ultrafilter, then their ultraproduct $\prod_\omega A_n$ is defined as the quotient of $\prod_{n=1}^{\infty}A_n$ by the ideal
$J=\{(a_n)_{n\in\N}\in \prod_{n=1}^{\infty}A_n\colon \lim_{\omega} \|a_n\|=0\}$.}, and let
$r \colon C \to \prod_{\omega}C(F_m,A)$ be the quotient map.
Notice that $\theta$ is a homomorphism, and that $\theta(c_{\omega}(A)) \subseteq \ker(r)$.
Thus,
\[r \circ \theta \circ L \colon A_{\omega} \to \prod_{\omega}C(F_m,A)\]
is a unital homomorphism. Now, let $T \colon C \to \ell^{\infty}(A)$ be given by
$T(f)_m = \frac{1}{|F_m|}\sum_{g\in F_m}f_m(g)$ for all $f\in C$ and all $m\in\N$.
Then $T$ is a unital completely positive map, and $P = T \circ \theta$.
Furthermore, as $T(\ker(r)) \subseteq c_{\omega}(A)$, the map $T$ induces a unital
completely positive map
\[\tilde{T} \colon  \prod_{\omega}C(F_m,A) \to A_{\omega}.\]
Note that
\[\tilde{T} \circ r \circ \theta = q \circ T \circ \theta = q \circ P.\]
It follows
that $q \circ P \circ L$ is a composition of a completely positive map and a homomorphism,
thus completely positive as claimed.

Fix $a \in (A_\omega)^{\alpha_{\omega}}$. Then $\|P(L(a))_m - L(a)_m\|<1/m$ for all $m\in\N$, and
thus $P(L(a)) - L(a)$ belongs to $c_0(A) \subseteq c_{\omega}(A)$. We conclude that $E_{\omega}(a) = a$.

Fix $a \in A_{\omega} \cap A'$. In order to show that $E_{\omega}(a) \in A_{\omega} \cap A'$,
it suffices to show that $[P(L(a)),z_k] \in c_{\omega}(A)$ for all $k \in \N$. To see that this is the
case, let $m \in \N$. Then $\|[P(L(a))_m,z_k]\|<1/m$ whenever $k\leq m$, so
$[P(L(a)),z_k] \in  c_0(A) \subseteq c_{\omega}(A)$.

Now let $\beta\colon G\to\Aut(B)$ be an action of $G$ on a \ca\ $B$, and let $\pi\colon A_\omega\cap A'\to B$ be an equivariant surjective
homomorphism. Given $b\in B^\beta$ choose $a\in A_\omega\cap A'$ satisfying $\pi(a)=b$.
Equivariance of $\pi$ implies that $\pi(E_{\omega}(a))=\pi(a)=b$, as desired.
\end{proof}

The following lemma is well known in the non-equivariant setting. In our context, it follows from the fact that equivariant
ultrapowers are (countably) saturated; see Subsection~2.2.4 in~\cite{GarLup_applications_2018}.

\begin{lma}\label{lma:MapIntoSeqAlg}
Let $G$ be a discrete group, and let $(A,\alpha)=\varinjlim (A_n,\alpha^{(n)})$ be an equivariant direct limit
with unital equivariant connecting maps $A_n\to A_{n+1}$. Let $B$ be any \uca\ and let $\omega$ be a free ultrafilter
over $\N$. Then there is a unital equivariant homomorphism $A\to B_\omega\cap B'$ if and only if for every $n\in\N$
there exists a unital equivariant homomorphism $A_n\to B_\omega\cap B'$.
\end{lma}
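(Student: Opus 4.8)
The forward implication is immediate and I would dispose of it first: given a unital equivariant homomorphism $\Phi\colon A\to B_\omega\cap B'$, precomposing with the canonical unital equivariant maps $A_n\to A$ (unital and equivariant because the connecting maps are) produces the required homomorphisms $A_n\to B_\omega\cap B'$ for all $n$. The content is the converse, so suppose we are given unital equivariant homomorphisms $\psi_n\colon A_n\to B_\omega\cap B'$, with \emph{no} compatibility assumed between them. We may assume, as holds in all our applications, that each $A_n$ is separable and that $G$ is countable, so that $A$ is separable and the images of the canonical maps $A_n\to A$ have dense union in $A$. Write $\beta$ for the action on $B$ and $\beta_\omega$ for the induced $G$-action on $B_\omega\cap B'$ (this relative commutant is $\beta_\omega$-invariant since $\beta_\omega$ preserves $B$), and recall that this equivariant ultrapower is countably saturated; see Subsection~2.2.4 in~\cite{GarLup_applications_2018}.

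The plan is to manufacture a single unital equivariant homomorphism out of the incompatible local data $(\psi_n)_n$ by realizing a type. First I would fix a countable subset $\{a_k\}_{k\in\N}\subseteq A$ which is closed under the $\ast$-algebra operations (over a countable dense set of scalars) and under the $G$-action, and which is dense in $A$. I would then encode the existence of a unital equivariant homomorphism $A\to B_\omega\cap B'$ as the problem of realizing a countable type in variables $(x_k)_{k\in\N}$ over $B_\omega\cap B'$: impose the conditions $\|P(x_{k_1},\ldots,x_{k_r})\|=\|P(a_{k_1},\ldots,a_{k_r})\|$ for every $\ast$-polynomial $P$ with coefficients in the chosen countable scalar set (forcing $a_k\mapsto x_k$ to preserve all $\ast$-polynomial relations, hence to extend to a unital $\ast$-homomorphism), together with the equivariance conditions $\beta_\omega^g(x_k)=x_{k'}$ whenever $a_{k'}=\alpha_g(a_k)$. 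These are countably many conditions because $G$ and $\{a_k\}$ are countable.

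The crux is to verify that this type is finitely satisfiable. A finite subset of conditions involves only finitely many elements $a_{k_1},\ldots,a_{k_r}$, finitely many group elements, and a tolerance $\ep>0$. Using density of the images of the $A_n$, I would choose $n$ large and $c_1,\ldots,c_r\in A_n$ whose images in $A$ approximate $a_{k_1},\ldots,a_{k_r}$ to within a small $\dt>0$; equivariance of the canonical maps, $\alpha_g\circ(A_n\to A)=(A_n\to A)\circ\alpha^{(n)}_g$, guarantees that the finitely many needed translates $\alpha_g(a_{k_i})$ are approximated as well. Setting $x_{k_i}=\psi_n(c_i)$ and using that $\psi_n$ is an \emph{exact} unital equivariant homomorphism, all the listed relations hold exactly for the $c_i$ and are transported to the $a_{k_i}$ up to an error governed by $\dt$ and the relevant norms; taking $\dt$ small makes this error $<\ep$. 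Once finite satisfiability is established, countable saturation of $B_\omega\cap B'$ furnishes elements $b_k$ realizing the full type, and $a_k\mapsto b_k$ extends to a unital equivariant homomorphism $\Phi\colon A\to B_\omega\cap B'$.

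I expect the finite-satisfiability step to be the only genuine obstacle, because one must keep the equivariance relations intact under the approximation. This turns out to be harmless precisely because the connecting maps, the canonical maps, and each $\psi_n$ are all exactly equivariant, so the sole source of error is the norm approximation of finitely many elements of $A$ by elements of a single $A_n$; the group action itself contributes no error. The remaining bookkeeping (that matching all $\ast$-polynomial norms yields a homomorphism, and that the type is a legitimate countable metric type) is routine and standard.
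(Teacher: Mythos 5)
Your overall strategy---realize a countable type in the countably saturated algebra $B_\omega\cap B'$, with finite satisfiability supplied by the maps $\psi_n$ and the density of $\bigcup_n\iota_n(A_n)$ in $A$---is exactly the route the paper intends (its proof consists of the single remark that equivariant ultrapowers are countably saturated). The forward direction and the general architecture are fine. There is, however, a genuine error in the type you write down: the conditions $\|P(x_{k_1},\ldots,x_{k_r})\|=\|P(a_{k_1},\ldots,a_{k_r})\|$ force the realized assignment $a_k\mapsto x_k$ to extend to an \emph{isometric embedding} of $A$, which is strictly more than the lemma asserts and more than the hypotheses can deliver: the given homomorphisms $\psi_n\colon A_n\to B_\omega\cap B'$ are not assumed injective (and in the paper's application the $A_n$ are dimension drop algebras, which admit many non-injective unital homomorphisms). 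Concretely, your finite-satisfiability step fails because $\|P(\psi_n(c))\|=\|\psi_n(P(c))\|$ can be strictly smaller than $\|P(c)\|$ when $\psi_n$ has a kernel; worse, the type itself can be unsatisfiable even when the conclusion of the lemma holds (take $G$ trivial, $A_n=A=\C\oplus\C$, $B=\C$: the element $a=(1,0)$ would have to go to a projection $x\in\C$ with $\|x\|=\|1-x\|=1$). The repair is standard: replace the equalities by the one-sided conditions $\max\left(\|P(x_{k_1},\ldots,x_{k_r})\|-\|P(a_{k_1},\ldots,a_{k_r})\|,\,0\right)=0$. These still guarantee that $a_k\mapsto x_k$ is well defined and extends to a unital $\ast$-homomorphism on $A$ (if $P(a)=Q(a)$ then $\|(P-Q)(x)\|\leq\|(P-Q)(a)\|=0$, and the resulting map on the dense $\ast$-subalgebra is contractive and multiplicative), and they are finitely satisfiable by your argument.

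Two smaller points. First, since the connecting maps and the canonical maps $\iota_n\colon A_n\to A$ need not be injective, choosing $c_i\in A_n$ with $\iota_n(c_i)$ close to $a_{k_i}$ controls $\|P(\iota_n(c))\|_A$ but not $\|P(c)\|_{A_n}$, and it is the latter that bounds $\|\psi_n(P(c))\|$; since $\|P(\iota_n(c))\|_A=\lim_m\|\iota_{n,m}(P(c))\|_{A_m}$ is a decreasing limit, you should push the finitely many relevant elements forward to a sufficiently late stage $A_m$, where these norms have nearly stabilized, and apply $\psi_m$ there rather than $\psi_n$. The same remark applies to the equivariance conditions. Second, for $B_\omega\cap B'$ to be countably saturated one wants $B$ separable, so that the commutation requirements form a countable set of conditions to be adjoined to the type; this should be added to the standing separability assumptions you list.
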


Our last ingredient will be the verification that amenable group actions on finite sums of the hyperfinite II$_1$-factor
are equivariantly McDuff.

\begin{prop}\label{prop:FinDirSumRMcDuff}
Let $M$ be a finite direct sum of copies of $\mathcal{R}$, let $\alpha\colon G\to\Aut(M)$ be an action of
an amenable group $G$ on $M$, and let $\omega$ be a free ultrafilter on $\N$.
Then there exists a unital homomorphism $M_2\to (M^\omega\cap M')^{\alpha^\omega}$.
\end{prop}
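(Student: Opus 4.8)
The plan is to reduce to the case of a single copy of $\R$ by using the orbit structure of $\alpha$ on the center of $M$, and then to invoke Ocneanu's theorem that amenable actions on $\R$ are $\id_\R$-McDuff.

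Write $M=\bigoplus_{i=1}^n p_iM$, where $p_1,\dots,p_n$ are the minimal central projections of $M$ and each $p_iM\cong\R$. Since $\alpha$ preserves the center, each $\alpha_g$ permutes the $p_i$, giving an action of $G$ on $\{1,\dots,n\}$. First I would decompose $\{1,\dots,n\}$ into $G$-orbits $O_1,\dots,O_r$; the corresponding summands $M_s=\bigoplus_{i\in O_s}p_iM$ are $\alpha$-invariant, and, using that the assignment $N\mapsto N^\omega\cap N'$ commutes with finite direct sums for tracial von Neumann algebras, one obtains an equivariant decomposition $M^\omega\cap M'=\bigoplus_{s=1}^r(M_s^\omega\cap M_s')$, whence $(M^\omega\cap M')^{\alpha^\omega}=\bigoplus_{s=1}^r(M_s^\omega\cap M_s')^{\alpha^\omega}$. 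It therefore suffices to produce a unital homomorphism $M_2\to(M_s^\omega\cap M_s')^{\alpha^\omega}$ for each $s$ and take their diagonal direct sum. Replacing $M$ by $M_s$, I may thus assume that $G$ acts transitively on $\{1,\dots,n\}$.

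Assume now that $G$ is transitive, and set $H=\{g\in G:\alpha_g(p_1)=p_1\}$, a subgroup of index $n$; being a subgroup of an amenable group, $H$ is amenable, and $\alpha|_H$ restricts to an action of $H$ on $p_1M\cong\R$. Writing $B_i=p_iM^\omega\cap(p_iM)'\cong\R^\omega\cap\R'$, we have $M^\omega\cap M'=\bigoplus_{i=1}^nB_i$ with $\alpha^\omega_g(B_i)=B_{g\cdot i}$. A tuple $(x_i)_i$ is $\alpha^\omega$-fixed if and only if $x_{g\cdot 1}=\alpha^\omega_g(x_1)$ for all $g\in G$; this forces $x_1$ to be fixed by $(\alpha|_H)^\omega$ and determines the remaining coordinates, while conversely any $(\alpha|_H)^\omega$-fixed $x_1\in B_1$ extends to a fixed tuple by setting $x_{g\cdot 1}=\alpha^\omega_g(x_1)$ (well defined, since $g\cdot 1=g'\cdot 1$ gives $g^{-1}g'\in H$). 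The key step, which I would carry out carefully, is to verify that the resulting restriction map $(x_i)_i\mapsto x_1$ is a unital $*$-isomorphism $(M^\omega\cap M')^{\alpha^\omega}\to(\R^\omega\cap\R')^{(\alpha|_H)^\omega}$.

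Finally, I would invoke Ocneanu's theorem, as used in the proof of \autoref{cor:DimKGamenable}, that the amenable action $\alpha|_H$ on $\R$ is $\id_\R$-McDuff. By condition (2) of \autoref{thm:W*BdleAbsR} applied with $\delta=\id_\R$ (a McDuff action), this yields a unital equivariant homomorphism $(\R,\id_\R)\to(\R^\omega\cap\R',(\alpha|_H)^\omega)$; since the source carries the trivial action, its image lands in $(\R^\omega\cap\R')^{(\alpha|_H)^\omega}$. Composing the inclusion of a unital copy of $M_2$ into $\R$ with this homomorphism and with the inverse of the isomorphism of the previous paragraph produces the desired unital homomorphism $M_2\to(M^\omega\cap M')^{\alpha^\omega}$. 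The only nonroutine ingredients are the equivariant direct-sum reduction and the identification of the fixed-point algebra of the transitive (induced) action with that of the restricted stabilizer action; the final McDuff input is then immediate, and no outerness hypothesis is needed.
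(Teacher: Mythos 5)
Your proof is correct. The paper actually states this proposition without giving a proof, so there is no argument to compare against; your write-up assembles precisely the ingredients the paper supplies elsewhere, namely the observation (made just before \autoref{prop:FiberW*Rp}) that $N\mapsto N^\omega\cap N'$ commutes with finite direct sums of tracial von Neumann algebras, and Ocneanu's theorem that amenable group actions on $\R$ are $\id_{\R}$-McDuff, as quoted in the proof of \autoref{cor:DimKGamenable}. The orbit decomposition, the identification of $(M^\omega\cap M')^{\alpha^\omega}$ with $(\R^\omega\cap\R')^{(\alpha|_H)^\omega}$ via the stabilizer $H$, and the final application of \autoref{thm:W*BdleAbsR} are all carried out correctly (note only that the countability of $G$, a standing hypothesis in the paper, is needed to invoke that theorem).
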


We are now ready to prove equivariant Jiang-Su absorption.

\begin{proof}[Proof of \autoref{thm:EqJiangSuAbs}.]
We adopt the notation from \autoref{eg:EqW*bundle}. In particular, we denote by $(\M,\gamma)$ the equivariant $W^*$-bundle obtained
from $(A,\alpha)$. By \autoref{prop:FinDirSumRMcDuff} and
\autoref{thm:W*BdleAbsR}, the fiber-action $\gamma^{\lambda}\colon G\to\Aut(\mathcal{M}_\lambda)$ is $\id_{\mathcal{R}}$-McDuff, for
every $\lambda\in\partial_eT(A)/G$.
By \autoref{cor:DimKGamenable}, it follows that $\gamma$ itself is cocycle conjugate to $\gamma\otimes \id_{\mathcal{R}}$.
By \autoref{thm:W*BdleAbsR}, this is equivalent to the existence, for some $k\geq 2$ and some free ultrafilter $\omega$, of a unital homomorphism
\[\varphi\colon M_k\to (\mathcal{M}^\omega\cap \mathcal{M}')^{\gamma^\omega}.\]
By \autoref{prop:FixedPtAlgsSurjective}, there is a natural surjective homomorphism
$\pi\colon (A_\omega\cap A')^{\alpha_\omega}\to (\mathcal{M}^\omega\cap \mathcal{M}')^{\gamma^\omega}$. By projectivity of $C_0([0,1))\otimes M_k$,
there exists a completely positive contractive order zero map $\rho\colon M_k\to (A_\omega\cap A')^{\alpha_\omega}$ making the following diagram
commute:
\begin{align*}
\xymatrix{ && (A_\omega\cap A')^{\alpha_\omega}\ar[d]^-{\pi}\\
M_k\ar[urr]^-{\rho}\ar[rr]_-{\varphi} && (\mathcal{M}^\omega\cap \mathcal{M}')^{\gamma^\omega}.}
\end{align*}

It is then immediate to check that $\tau_\omega(\rho^m(e_{1,1}))=1/k$ for all $\tau\in T(A)$ and for all $m\in\N$.
For $j=1,\ldots,k$, set $c_j=\rho(e_{1,j})$, which is a contraction in
$(A_\omega\cap A')^{\alpha_\omega}$. It is clear that these elements
satisfy
\[c_1\geq 0 \ \ \mbox{ and } \ \ c_ic_j^*=\begin{cases}
  c_1^2 & \text{if}\ i=j \\
  0           & \text{else},
\end{cases}\]
for $1\leq i,j\leq k$. By part~(b) of \autoref{prop:EqSI}, there exists a contraction
$s\in (A_\omega\cap A')^{\alpha_\omega}$ satisfying $s^*s=1-\sum\limits_{j=1}^k c_j^*c_j$
and $c_1s=s$. By Proposition~5.1 in~\cite{RorWin_algebra_2010}, there exists
a unital homomorphism $I_{k,k+1}\to (A_\omega\cap A')^{\alpha_\omega}$.
By \autoref{lma:MapIntoSeqAlg}, there exists a unital homomorphism $\mathcal{Z}\to (A_\omega\cap A')^{\alpha_\omega}$.
Finally, using virtually the same argument as in
\autoref{thm:W*BdleAbsR}, the existence of such a unital homomorphism is equivalent to
$\alpha$ and $\alpha\otimes\id_{\mathcal{Z}}$ being strongly cocycle
conjugate; see, for example, Theorem~4.8 in~\cite{MatSat_stability_2012} and Theorem~2.6 in~\cite{Sza_strongly_2018}.
\end{proof}

It should be pointed out that amenability of $G$ is a necessary assumption in \autoref{thm:EqJiangSuAbs}, as
shown in \cite{GarLup_actions_2018}. In fact, the combination of these results implies that amenability of $G$
can be \emph{characterized} in terms of $\mathcal{Z}$-absorption.

\begin{cor}
Let $G$ be a discrete group. Then $G$ is amenable if and only if every action of $G$ on $\mathcal{Z}$
absorbs $\id_{\mathcal{Z}}$ tensorially.
\end{cor}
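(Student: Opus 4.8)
The plan is to prove the two implications separately, deducing the ``only if'' direction from \autoref{thm:EqJiangSuAbs} and invoking \cite{GarLup_actions_2018} for the ``if'' direction.

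First I would establish that amenability of $G$ forces every action $\alpha\colon G\to\Aut(\mathcal{Z})$ to absorb $\id_{\mathcal{Z}}$. The idea is simply to apply \autoref{thm:EqJiangSuAbs} with $A=\mathcal{Z}$, so the bulk of the work is verifying that $\mathcal{Z}$ satisfies all of the hypotheses. Indeed, $\mathcal{Z}$ is separable, simple, unital and stably finite, and being $\mathcal{Z}$-stable it has property (SI). Its trace space $T(\mathcal{Z})$ is a single point, hence a Bauer simplex whose extreme boundary $\partial_eT(\mathcal{Z})$ is a one-point space; in particular $\dim\partial_eT(\mathcal{Z})=0<\infty$. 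The weak closure of $\mathcal{Z}$ in the GNS representation of its unique trace is the hyperfinite II$_1$-factor $\mathcal{R}$, which is McDuff, so $\overline{\mathcal{Z}}^{\tau}$ is McDuff for the unique $\tau\in\partial_eT(\mathcal{Z})$. Finally, since $\partial_eT(\mathcal{Z})$ is a single point, the induced $G$-action on it is trivial, so every orbit is a singleton (hence finite) and the orbit space is a one-point space (hence Hausdorff). \autoref{thm:EqJiangSuAbs} then yields that $\alpha$ is strongly cocycle conjugate to $\alpha\otimes\id_{\mathcal{Z}}$; since strong cocycle conjugacy implies cocycle conjugacy, $\alpha$ absorbs $\id_{\mathcal{Z}}$ tensorially, as required.

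For the converse I would argue by contraposition: assuming $G$ is not amenable, I would invoke the results of \cite{GarLup_actions_2018}, which produce an action of a non-amenable group on $\mathcal{Z}$ that fails to absorb $\id_{\mathcal{Z}}$. This shows that if every action of $G$ on $\mathcal{Z}$ is $\id_{\mathcal{Z}}$-absorbing, then $G$ must be amenable, completing the equivalence.

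The main obstacle here is not in the amenable direction---where the only real content is matching the hypotheses of \autoref{thm:EqJiangSuAbs} to the (very well-behaved) algebra $\mathcal{Z}$---but rather in the converse, which genuinely requires the external construction of a non-absorbing action for non-amenable groups carried out in \cite{GarLup_actions_2018}. I expect that the forward direction is essentially a verification, while the converse is where the substantive non-amenable obstruction lives.
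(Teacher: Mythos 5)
Your proof is correct and follows the same route as the paper: the forward direction is \autoref{thm:EqJiangSuAbs} applied to $A=\mathcal{Z}$ (whose hypotheses you verify correctly), and the converse is Theorem~4.3 of \cite{GarLup_actions_2018}, which the paper makes slightly more explicit by naming the Bernoulli shift on $\bigotimes_{g\in G}\mathcal{Z}\cong\mathcal{Z}$ as the non-absorbing action.
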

\begin{proof} This is a combination of \autoref{thm:EqJiangSuAbs} and Theorem~4.3 in~\cite{GarLup_actions_2018}.
More specifically, the latter implies that when $G$ is not amenable, then the Bernoulli shift of $G$ on
$\bigotimes\limits_{g\in G}\mathcal{Z}\cong\mathcal{Z}$ does not absorb $\id_{\mathcal{Z}}$. \end{proof}

\autoref{thm:EqJiangSuAbs} allows us to recover and extend the main result of the recent work~\cite{Sat_actions_2016}.

\begin{cor}\label{cor:CPisZstable}
Let $G$ be a countable amenable group, let $A$ be a separable, simple, \uca\ with property (SI),
and let $\alpha\colon G\to\Aut(A)$ be an action.
Suppose that $T(A)$ is a nonempty Bauer simplex, that $\dim \partial_eT(A)<\I$,
that $\overline{A}^\tau$ is McDuff for all $\tau\in\partial_eT(A)$,
and that the induced action of $G$ on $\partial_eT(A)$ has finite orbits and Hausdorff orbit space.

Then $A$ and $A\rtimes_\alpha G$ are $\mathcal{Z}$-stable, and so is $A^\alpha$ when $G$ is finite.
\end{cor}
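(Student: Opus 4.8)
The plan is to deduce all three assertions from \autoref{thm:EqJiangSuAbs}, which carries the entire analytic content; the present statement is then a formal consequence. First I would check that the hypotheses of that theorem are in force. The only assumption appearing there but not explicitly listed here is stable finiteness of $A$; but since $A$ is unital and simple with $T(A)\neq\varnothing$, it carries a faithful tracial state and is therefore stably finite, so this comes for free. Invoking \autoref{thm:EqJiangSuAbs}, I obtain that $\alpha$ is strongly cocycle conjugate to $\alpha\otimes\id_{\mathcal{Z}}$. For the first assertion I would then simply read off the underlying-algebra isomorphism $A\cong A\otimes\mathcal{Z}$ that is part of the data of any cocycle conjugacy between $\alpha$ and $\alpha\otimes\id_{\mathcal{Z}}$; this shows that $A$ is $\mathcal{Z}$-stable.

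For the crossed product I would use two standard functoriality facts. First, cocycle conjugate actions (which strong cocycle conjugacy entails) have isomorphic crossed products, giving $A\rtimes_\alpha G\cong(A\otimes\mathcal{Z})\rtimes_{\alpha\otimes\id_{\mathcal{Z}}}G$. Second, since the action on the second tensor factor is trivial, the crossed product commutes with tensoring by $\mathcal{Z}$, that is, $(A\otimes\mathcal{Z})\rtimes_{\alpha\otimes\id_{\mathcal{Z}}}G\cong(A\rtimes_\alpha G)\otimes\mathcal{Z}$. Chaining these isomorphisms yields $A\rtimes_\alpha G\cong(A\rtimes_\alpha G)\otimes\mathcal{Z}$, so $A\rtimes_\alpha G$ is $\mathcal{Z}$-stable.

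For the fixed point algebra when $G$ is finite, I would realize $A^\alpha$ as the corner $p(A\rtimes_\alpha G)p$ cut down by the averaging projection $p=\frac{1}{|G|}\sum_{g\in G}u_g\in A\rtimes_\alpha G$, which is a unital \hsa\ of $A\rtimes_\alpha G$. Since $\mathcal{Z}$-stability passes to hereditary subalgebras of separable \cas, and $A\rtimes_\alpha G$ has just been shown to be $\mathcal{Z}$-stable, $A^\alpha$ inherits $\mathcal{Z}$-stability.

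I do not expect a serious obstacle here: once \autoref{thm:EqJiangSuAbs} is available, the corollary is essentially bookkeeping. The only points that deserve a little care are the two crossed-product functoriality isomorphisms and the permanence of $\mathcal{Z}$-stability under passage to hereditary subalgebras; all three are standard, but one should make sure to quote them in the separable, unital framework in which they are valid, and to note that the averaging projection indeed produces $A^\alpha$ as a unital corner.
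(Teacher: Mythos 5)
Your argument is correct and is exactly the intended derivation: the paper states this corollary without proof as an immediate consequence of \autoref{thm:EqJiangSuAbs}, and your bookkeeping (stable finiteness from the faithful trace, isomorphism of crossed products under cocycle conjugacy, commuting the trivial $\mathcal{Z}$-factor past the crossed product, and realizing $A^\alpha$ as the corner by the averaging projection together with permanence of $\mathcal{Z}$-stability for hereditary subalgebras of separable $C^*$-algebras) fills in precisely the steps the authors leave implicit.
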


As a further application, we obtain an equivariant analog of Winter's theorem from \cite{Win_strongly_2011}:
strongly self-absorbing actions of discrete amenable groups are equivariantly $\mathcal{Z}$-stable. This
answers a question from \cite{Sza_stronglyII_2016} concerning unitary regularity.

\begin{thm}\label{cor:SSAActZstable}
Let $\gamma\colon G\to\Aut(\mathcal{D})$ be a strongly self-absorbing action
of a discrete amenable group $G$ on a tracial strongly self-absorbing \ca\ $\mathcal{D}$.
Then $\gamma$ is cocycle conjugate to $\gamma\otimes\id_{\mathcal{Z}}$.
In particular, $\gamma$ is unitarily regular.
\end{thm}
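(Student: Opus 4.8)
The plan is to obtain this as a direct application of \autoref{thm:EqJiangSuAbs} to the system $(\mathcal{D},\gamma)$, after which the ``in particular'' clause will follow from the known implication that equivariant $\mathcal{Z}$-stability forces unitary regularity. Since $\mathcal{D}$ is a (separable) strongly self-absorbing \ca, it is automatically simple and unital, and being \emph{tracial} it carries a unique tracial state $\tau$ and is stably finite. Moreover, every strongly self-absorbing \ca\ is $\mathcal{Z}$-stable, and $\mathcal{Z}$-stability of a separable, simple, unital \ca\ yields property (SI); thus $\mathcal{D}$ satisfies all the \ca-theoretic hypotheses of \autoref{thm:EqJiangSuAbs}. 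We may assume $G$ is countable, as is built into the definition of a strongly self-absorbing action.

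First I would dispatch the tracial hypotheses, all of which trivialize because $\tau$ is the unique trace. Indeed $T(\mathcal{D})=\{\tau\}$ is a one-point simplex, which is vacuously a Bauer simplex, with $\partial_eT(\mathcal{D})=\{\tau\}$; hence $\dim\partial_eT(\mathcal{D})=0<\I$. The induced $G$-action on the single point $\partial_eT(\mathcal{D})$ is trivial, so it has finite (singleton) orbits and Hausdorff (one-point) orbit space.

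Next I would check that $\overline{\mathcal{D}}^\tau$ is McDuff. Since $\mathcal{D}$ is strongly self-absorbing it is $\mathcal{Z}$-stable, so $\mathcal{D}\cong\mathcal{D}\otimes\mathcal{Z}$; passing to weak closures in the (unique, hence product) trace gives $\overline{\mathcal{D}}^\tau\cong\overline{\mathcal{D}}^\tau\,\overline{\otimes}\,\overline{\mathcal{Z}}^{\tau_{\mathcal{Z}}}\cong\overline{\mathcal{D}}^\tau\,\overline{\otimes}\,\mathcal{R}$, where we use that the weak closure of $\mathcal{Z}$ in its unique trace is $\mathcal{R}$. Hence $\overline{\mathcal{D}}^\tau$ is McDuff. (Alternatively, nuclearity of $\mathcal{D}$ forces $\overline{\mathcal{D}}^\tau$ to be injective, hence hyperfinite by Connes, and simplicity with a unique trace makes it the factor $\mathcal{R}$.) With all hypotheses in place, \autoref{thm:EqJiangSuAbs} gives that $\gamma$ is strongly cocycle conjugate, and in particular cocycle conjugate, to $\gamma\otimes\id_{\mathcal{Z}}$.

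Finally, for unitary regularity I would invoke the result of Szab\'o from~\cite{Sza_stronglyII_2016} that an action cocycle conjugate to $\gamma\otimes\id_{\mathcal{Z}}$ is unitarily regular, which is precisely the question raised there. The only genuine content beyond bookkeeping lies in \autoref{thm:EqJiangSuAbs} itself: strong self-absorption of $\gamma$ plays no role in verifying the hypotheses, so the conclusion in fact holds for \emph{any} action of a countable amenable group on a tracial strongly self-absorbing \ca. I expect the only places requiring care are citing the correct form of ``property (SI) from $\mathcal{Z}$-stability'' and the precise statement linking equivariant $\mathcal{Z}$-stability to unitary regularity.
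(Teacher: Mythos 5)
Your proposal is correct and follows essentially the same route as the paper: the paper's proof is the one-line observation that $\mathcal{D}$ is $\mathcal{Z}$-stable by Winter's theorem \cite{Win_strongly_2011}, so \autoref{thm:EqJiangSuAbs} applies; you have simply spelled out the (routine) verification of its hypotheses — unique trace, hence trivial Bauer simplex data and stable finiteness, property (SI) from $\mathcal{Z}$-stability and nuclearity, and McDuffness of $\overline{\mathcal{D}}^\tau$ — all of which is accurate. The only small caveat is that the deduction of property (SI) from $\mathcal{Z}$-stability uses nuclearity (Matui--Sato), which holds here since strongly self-absorbing \cas\ are nuclear.
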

\begin{proof} Since $\mathcal{D}$ is $\mathcal{Z}$-stable by the main result of
\cite{Win_strongly_2011}, the result follows from \autoref{thm:EqJiangSuAbs}.
\end{proof}

As shown in Example~5.4 of~\cite{Sza_stronglyII_2016}, equivariant $\mathcal{Z}$-stability fails for
actions of locally compact amenable groups on $\mathcal{Z}$-stable \ca s -- it even fails for compact
group actions on UHF-algebras.
On the other hand, the actions constructed in \cite{GarLup_actions_2018} show that equivariant
$\mathcal{Z}$-stability is not automatic for actions of discrete nonamenable groups, even if they
act on $\mathcal{Z}$ itself.


\end{document}